\newcommand{\eq}[1]{\begin{align*}#1\end{align*}}
\newcommand{\eqn}[1]{\begin{align}#1\end{align}}
\newcommand{\ieq}[1]{$#1$}
\newcommand*{\abs} [1]{\lvert#1\rvert}
\newcommand*{\norm}[1]{\lVert#1\rVert}
\newcommand*{\set} [1]{\{#1\}}
\newcommand*{\setm}[2]{\{\,#1\mid#2\,\}}
\newcommand*{\Setm}[2]{\biggl\{\,#1\biggm| #2\,\biggr\}}
\newcommand*{\iprod}[2]{\langle#1,#2\rangle}
\newcommand*{\Abs}[2][default]{\ifthenelse{\equal{#1}{default}}{\left\lvert#2\right\rvert}{\ldelim{#1}{\lvert}#2\rdelim{#1}{\rvert}}}
\newcommand*{\Norm}[2][default]{\ifthenelse{\equal{#1}{default}}{\left\lVert#2\right\rVert}{\ldelim{#1}{\lVert}#2\rdelim{#1}{\rVert}}}
\newcommand{\ga}{\alpha}
\newcommand{\gb}{\beta}
\renewcommand{\gg}{\gamma}
\newcommand{\gd}{\delta}
\newcommand{\gl}{\lambda}
\newcommand{\gw}{\omega}
\newcommand{\gs}{\sigma}
\newcommand{\eps}{\varepsilon}
\newcommand*{\C}{{\mathbb{C}}}
\newcommand*{\R}{{\mathbb{R}}}
\newcommand*{\N}{{\mathbb{N}}}
\DeclareMathOperator{\re}{Re}
\DeclareMathOperator{\sign}{sign}
\DeclareMathOperator{\supp}{supp}
\DeclareMathOperator*{\essup}{ess\,sup}
\DeclareMathOperator*{\essinf}{ess\,inf}
\DeclareMathOperator*{\dist}{dist}
\newcommand{\citel}[2]{\cite[#2]{#1}}
\newcommand*{\inv}{^{-1}}
\newcommand{\pinv}{^+}
\newcommand*{\Lp}[1][p]{L^{#1}}
\newcommand*{\Lin}{{\mathcal{L}}}
\newcommand{\Dom}{D}
\newcommand{\pmat}[1]{\begin{pmatrix}#1\end{pmatrix}}
\newcommand{\pmatsmall}[1]{\begin{psmallmatrix}#1\end{psmallmatrix}}
\newcommand{\ran}{\textup{Ran}}
\renewcommand{\ker}{\textup{Ker}}
\newcommand{\RB}[1][is]{(#1-A_B)\inv}
\newcommand{\RA}[1][is]{(#1-A)\inv}
\newcommand{\Azhalf}{\cramped{L^{1/2}}}
\renewcommand{\Azhalf}[1][1]{\smash{L\raisebox{1ex}{\tiny$#1/2$}}}
\newcommand{\Azmhalf}{\cramped{L^{-1/2}}}
\renewcommand{\Azmhalf}[1][1]{\smash{L\raisebox{1ex}{\tiny$-#1/2$}}}
\newcommand{\SG}{(T(t))_{t\geq 0}}
\newcommand{\SGB}[1][B]{(T_{#1}(t))_{t\geq0}}
\newcommand{\WP}[3][A]{\textup{WP}_{#2, #3(s)}(#1)}
\newtheorem{theorem}{Theorem}[section]
\newtheorem{lemma}[theorem]{Lemma}
\newtheorem{proposition}[theorem]{Proposition}
\newtheorem{corollary}[theorem]{Corollary}
\theoremstyle{definition}
\newtheorem{definition}[theorem]{Definition}
\newtheorem{assumption}[theorem]{Assumption}
\newtheorem{remark}[theorem]{Remark}
\numberwithin{equation}{section}
\begin{document}

\title{Non-uniform Stability of Damped Contraction Semigroups}

\thispagestyle{plain}

\author[R. Chill, L. Paunonen, D. Seifert, R. Stahn,   and Yu. Tomilov]{Ralph Chill, Lassi Paunonen, David Seifert, Reinhard Stahn and Yuri Tomilov}

\address[R. Chill]{Institut f\"{u}r Analysis, Fakult\"{a}t f\"{u}r Mathematik, TU Dresden, 01062 Dresden, Germany}
\email{ralph.chill@tu-dresden.de}

\address[L. Paunonen]{Mathematics, Tampere University, PO.\ Box 692, 33101 Tampere, Finland}
\email{lassi.paunonen@tuni.fi}

\address[D. Seifert]{School of Mathematics, Statistics and Physics, Newcastle University, Newcastle upon Tyne, NE1 7RU United Kingdom}
\email{david.seifert@ncl.ac.uk}

\address[R. Stahn]{Formerly with Institut f\"{u}r Analysis, Fakult\"{a}t f\"{u}r Mathematik, TU Dresden, 01062 Dresden, Germany}
\email{ReinhardStahn@t-online.de}

\address[Yu. Tomilov]{Institute of Mathematics, Polish Academy of Sciences, \'{S}niadeckich 8, 00956 Warsaw, Poland}
\email{ytomilov@impan.pl}

\thanks{The research of L. Paunonen is funded by the Academy of Finland grants 298182 and 310489.
The work of Yu.\ Tomilov was partially supported the NCN grant UMO-2017/27/B/ST1/00078}

\begin{abstract}
  We investigate the stability properties of strongly continuous semigroups generated by operators of the form $A-BB^\ast$, where $A$ is the  generator of a contraction semigroup and $B$ is a possibly unbounded operator. Such systems arise naturally in the study of hyperbolic partial differential equations with damping on the boundary or inside the spatial domain. As our main results we present general sufficient conditions for non-uniform stability of the semigroup generated by $A-BB^\ast$ in terms of selected observability-type conditions on the pair $(B^\ast,A)$. 
The core of our approach consists of deriving resolvent estimates for the generator expressed in terms of these observability properties.
We apply the abstract results to obtain rates of energy decay in one-dimensional and two-dimensional wave equations, a damped fractional Klein--Gordon equation and a weakly damped beam equation.
\end{abstract}

\subjclass[2010]{%
47D06, 
34D05, 
47A10, 
35L90 
(93D15,  
35L05)
}
\keywords{Non-uniform stability, strongly continuous semigroup, resolvent estimate, hyperbolic equation, observability, damped wave equation, Klein--Gordon equation, beam equation}

\maketitle

\section{Introduction}
\label{sec:intro}

In this paper we study the stability properties of abstract differential equations of the form
\eqn{
  \label{eq:ACPintro}
  \dot{x}(t)&=(A-BB^\ast)x(t), \qquad x(0)=x_0\in X.
}
Here $A$ generates a strongly continuous contraction semigroup,
 or typically a unitary group,
on the Hilbert space $X$ and $B$ is a possibly unbounded operator, 
defined on a Hilbert space $U$.
This class of dynamical systems includes
several types of partial differential equations with damping, especially wave equations~\cite{Leb96,AmmTuc01,AnaLea14} and other hyperbolic PDE models~\cite{LiuZha15,DelPat21}. Equations of this form are also often encountered in control theory as a result of feedback interconnections and output feedback stabilisation~\cite{Sle74,Ben78a,GuoLuo02a,LasTri03,CurWei06,CurWei19}. 
Our main interest is in studying
stability properties of the semigroup $\SGB$ generated by $A-BB^\ast$ and the asymptotic behaviour of the solution $x(\cdot)=T_B(\cdot)x_0$ of~\eqref{eq:ACPintro}.
One of the key results concerning
equations of the form~\eqref{eq:ACPintro} is that  stability of $\SGB$ can be characterised in terms of  \emph{observability}
of the pair $(B^\ast,A)$; see~\cite{Sle74,Ben78a,CurWei06,CurWei19}. This relationship is well understood in the context of \emph{exponential stability} and \emph{strong stability}. In this paper we investigate this relationship for semigroups $\SGB$ which are \emph{polynomially stable} or more generally \emph{non-uniformly stable}. Our main results introduce new observability-type conditions
which can be used to guarantee and verify the precise non-uniform stability properties of the differential equation~\eqref{eq:ACPintro}.

The problem in~\eqref{eq:ACPintro} and the associated semigroup $\SGB$ are said to be \emph{(uniformly) exponentially stable} if $\norm{x(t)}\leq Me^{-\gw t}\norm{x_0}$
for all $x_0\in X$ and $t\geq 0$ and for some constants $M$, $\gw>0$.
The weaker notion of \emph{strong stability} requires only that $\norm{x(t)}\to 0$ for $t\to\infty$ for all $x_0\in X$.
The main benefit of exponential stability over strong stability is that the decay of the solutions takes place at a guaranteed \emph{rate} as $t\to \infty$.
In this paper we focus on
\emph{non-uniform stability}~\cite{BatDuy08,BorTom10,RozSei19,Chill_et_al},
where $\SGB$ is strongly stable and all \emph{classical solutions} of~\eqref{eq:ACPintro} decay at a specific rate.
Non-uniform and polynomial stability  have been investigated in detail especially for damped wave equations on multidimensional domains~\cite{Leb96,LiuRao05,BurHit07,AnaLea14,Sta17,CavMa19arxiv,DatKle20}, coupled partial differential equations~\cite{Duy07}, and plate equations~\cite{LiuZha15,LauLea21}.

Under suitable assumptions on $A$ and $B$, exponential stability of the semigroup $\SGB$ is equivalent to ``exact observability''~\citel{TucWei09book}{Ch.~6} of the pair $(B^\ast,A)$~\cite{Sle74,CurWei06}.
In addition, strong stability can be characterised in terms of ``approximate observability'' of $(B^\ast,A)$~\cite{Ben78a}.
In this paper we show that several modified concepts, each of which may be seen as ``quantified approximate observability'' of the pair $(B^\ast,A)$, lead to non-uniform stability of the semigroup $\SGB$.
In particular,
we say that $(B^\ast,A)$ satisfies the \emph{non-uniform Hautus test} if
there exist functions $M, m :\R\to [r_0,\infty)$ with $r_0>0$ such that~\citel{Mil12}{Sec.~2.3}
\eq{
  \norm{x}_X^2\leq M(s) \norm{(is-A)x}_X^2 + m(s) \norm{B^\ast x}_U^2 , \qquad  x\in\Dom(A),\  s\in\R.
}
In addition, if $A$ is skew-adjoint we say that the pair $(B^\ast,A)$ satisfies the \emph{wavepacket condition} if there exist bounded functions
$\gg,\gd:\R\to (0,\infty)$
such that~\citel{Mil12}{Sec.~2.5}
  \eqn{
    \label{eq:WPcondIntro}
    \norm{B^\ast x}_U\geq \gg(s) \norm{x}_X, \qquad  x\in \WP{s}{\gd}, ~ s\in\R.
  }
  Here $\WP{s}{\gd}$ denotes the spectral subspace of $-iA$ associated with the interval $(s-\gd(s),s+\gd(s))$ (elements of $\WP{s}{\gd}$ are called \emph{wavepackets} of~$A$).

  The following theorem summarises our main results on these two observability concepts.
  The precise assumptions of Theorem~\ref{thm:FreqDomIntro}
are stated in Assumption~\ref{ass:ABBass} in Section~\ref{sec:wellposedness}, and they are automatically satisfied whenever $A$ generates a strongly continuous contraction semigroup and $B\in \Lin(U,X)$.
The results employ a function $\mu:\R\to [r_0,\infty)$, $r_0>0$, such that 
\eqn{
\label{eq:BRBboundIntro}
\norm{B^\ast (1+is-A)\inv B}\leq \mu(s), \qquad s\in\R.
}
As shown in Section~\ref{sec:wellposedness}, we may always choose $\mu$ in such a way that $\mu(s)\lesssim 1+s^2$, $s\in\R$. Moreover, 
in the case where 
$B\in \Lin(U,X)$
and in many concrete applications
$\mu$ may be taken to be constant.
Finally, a measurable function $N:[0,\infty)\to (0,\infty)$ is said to have \emph{positive increase} if there exist $\ga$, $c_\ga$, $s_0>0$ such that $N(\gl s)/N(s)\geq c_\ga\gl^\ga$ for all $\gl\geq 1$ and $s\geq s_0$.

  \begin{theorem}
    \label{thm:FreqDomIntro}
    Assume that the operators $A$ and $B$ satisfy Assumption~\textup{\ref{ass:ABBass}}
 and that  $\mu:\R\to[r_0,\infty)$, $r_0>0$, is an even function such that~\eqref{eq:BRBboundIntro} holds.

    If the pair $(B^\ast,A)$ satisfies the non-uniform Hautus test for some continuous and even functions $M$ and $m$, and if the function 
$N: [0,\infty)\to (0,\infty)$ defined by
$N(\cdot):=M(\cdot)\mu(\cdot)+m(\cdot)\mu(\cdot)^2$
is strictly increasing and has positive increase,
then $\SGB$ is non-uniformly stable and
    \eqn{
      \label{eq:NUStabIntro2}
  \norm{T_B(t)x_0}\leq \frac{C}{N\inv(t)} \norm{(A-BB^\ast)x_0}, \quad  x_0\in \Dom(A-BB^\ast),\ t\geq t_0,
    }
 for some $C$, $t_0>0$,
 where $N\inv$ is the inverse function of $N$.

If $A$ is skew-adjoint and $(B^\ast,A)$ satisfies the wavepacket condition~\textup{\eqref{eq:WPcondIntro}} for continuous and even functions  $\gg,\gd$ such that $\gg(\cdot)\inv \gd(\cdot)\inv$ is strictly increasing and has positive increase, then
$\SGB$ is non-uniformly stable and~\eqref{eq:NUStabIntro2}
is satisfied for
$N(\cdot):=\gg(\cdot)^{-2}\gd(\cdot)^{-2}\mu(\cdot)^2$.
  \end{theorem}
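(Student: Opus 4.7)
The high-level strategy is to reduce both assertions to a quantitative resolvent bound on the imaginary axis for the perturbed generator $A-BB^\ast$, and then to apply the Batty--Duyckaerts/Borichev--Tomilov type theorem for semigroup decay governed by a resolvent of positive increase, as available for instance in \cite{RozSei19}. Given $i\R\subset\rho(A-BB^\ast)$ together with $\|(is-(A-BB^\ast))\inv\|\lesssim M(|s|)$, that theorem directly produces the decay estimate \eqref{eq:NUStabIntro2}.

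\medskip
\noindent\textbf{Step 1: non-uniform Hautus test implies the resolvent bound.} Fix $s\in\R$ and $x\in\Dom(A-BB^\ast)$, and set $y=(is-(A-BB^\ast))x$ so that $(is-A)x=y-BB^\ast x$. Dissipativity of $A$ yields
\eq{
\|B^\ast x\|^2\le \re\iprod{y}{x}\le \|y\|\,\|x\|.
}
Substituting the identity for $(is-A)x$ into the Hautus inequality and controlling $\|BB^\ast x\|$ via the framework of Section~\ref{sec:assumptions}, one obtains
\eq{
\|x\|^2\lesssim M_o(s)\|y\|^2+\bigl(M_o(s)+m_o(s)\bigr)\|y\|\,\|x\|,
}
and Young's inequality gives $\|x\|\lesssim M(s)\|y\|$. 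This a priori estimate makes $is-(A-BB^\ast)$ injective with closed range; combined with the fact that $A-BB^\ast$ generates a contraction semigroup (so that the open right half-plane lies in its resolvent set), a standard continuity/connectedness argument extends the resolvent to the whole imaginary axis and turns the a priori estimate into a genuine resolvent bound.

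\medskip
\noindent\textbf{Step 2: wavepacket condition implies the Hautus test.} Assume $A$ is skew-adjoint and denote by $E$ the spectral measure of $-iA$. Decompose $x=x_1+x_2$ with $x_1=E((s-\gd(s),s+\gd(s)))x\in\WP{s}{\gd}$. Spectral calculus furnishes the orthogonal identity $\|(is-A)x\|^2=\|(is-A)x_1\|^2+\|(is-A)x_2\|^2$ together with
\eq{
\|(is-A)x_2\|\ge \gd(s)\|x_2\|.
}
The wavepacket hypothesis applied to $x_1$ gives $\|x_1\|\le \gg(s)\inv\|B^\ast x_1\|\le \gg(s)\inv\bigl(\|B^\ast x\|+\|B^\ast x_2\|\bigr)$; coupling admissibility of $B^\ast$ with the spectral localisation of $x_2$ to bound $\|B^\ast x_2\|\lesssim\gd(s)\inv\|(is-A)x\|$, I arrive at
\eq{
\|x\|^2\lesssim \gg(s)^{-2}\gd(s)^{-2}\|(is-A)x\|^2+\gg(s)^{-2}\|B^\ast x\|^2,
}
which is the non-uniform Hautus test with $M_o\asymp\gg^{-2}\gd^{-2}$ and $m_o\asymp\gg^{-2}$. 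Since $\gd$ is bounded, the sum $M_o+m_o$ is comparable to $\gg^{-2}\gd^{-2}$, and Step 1 then applies with $M=\gg^{-2}\gd^{-2}$.

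\medskip
\noindent\textbf{Main obstacle.} The heart of the argument, where genuine care is needed, is the control of $\|BB^\ast x\|$ in Step 1 and of $\|B^\ast x_2\|$ for non-wavepacket data in Step 2 when $B$ is only assumed admissible rather than bounded. This is precisely where the specific assumptions on $B$ from Section~\ref{sec:assumptions} must be deployed, via a Carleson/Weiss-type estimate for $B^\ast$ on spectral subspaces of the skew-adjoint generator. Everything else is either standard spectral calculus, an elementary dissipativity computation, or the by-now-standard quantified Tauberian theorem with positive-increase rates.
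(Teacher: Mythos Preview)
Your overall strategy matches the paper's exactly: derive a resolvent bound $\|(is-A+BB^\ast)^{-1}\|\lesssim M(|s|)$ from the Hautus test (Theorem~\ref{thm:HautustoResgrowth}), show that the wavepacket condition implies the Hautus test with $M_o\asymp\gamma^{-2}\delta^{-2}$ and $m_o\asymp\gamma^{-2}$ (the paper cites~\citel{Mil12}{Prop.~2.16} for this, but also gives a direct argument essentially identical to your Step~2), and then invoke~\citel{RozSei19}{Thm.~3.2} for the decay rate. Your Step~2 and the passage to semigroup decay are fine.

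The one point where your sketch is genuinely incomplete is the unbounded-$B$ case in Step~1, and your diagnosis of the obstacle is slightly off. The problem is not merely ``controlling $\|BB^\ast x\|$'': for $x\in\Dom(A-BB^\ast)$ with unbounded $B$ one need not have $x\in\Dom(A)$ at all, so the Hautus inequality cannot be applied to $x$ directly, and $BB^\ast x$ lives only in $V^\ast$. The paper's remedy is to set $R_1=(1+is-A)^{-1}$ and apply the Hautus test instead to $x_1=x+R_1BB^\ast x$, which \emph{does} lie in $\Dom(A)$ since $(is-A)x_1=y-R_1BB^\ast x\in X$. The uniform bounds $\|R_1B\|\lesssim1$ and $\|B^\ast R_1B\|\lesssim1$ coming from admissibility (Assumption~\ref{ass:ABBass}(4)) then let one pass back from $x_1$ to $x$. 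Once you insert this regularization, your inequalities go through verbatim. For the spectral inclusion $i\R\subset\rho(A-BB^\ast)$, the paper avoids a connectedness argument by using that a generator of a contraction semigroup on a Hilbert space has $is-A+BB^\ast$ with dense range whenever it is injective, so the lower bound alone suffices.
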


Equations of the form~\eqref{eq:ACPintro}
 in particular include
the damped second-order equation
\eqn{
  \label{eq:AbsWEIntro}
  \ddot{w}(t)+L w(t)+DD^\ast \dot{w}(t)=0 , \qquad w(0)\in H_{1/2},\  \dot{w}(0)\in H,
}
for a positive operator  $L$ on a Hilbert space $H$ and $D\in \Lin(U,H_{-1/2})$,
 where $H_{1/2}$  is the domain of the fractional power $L^{1/2}$ and $H_{-1/2}$ is its dual with respect to the pivot space $H$.
Non-uniform stability of
such systems
has been studied in the literature in the case where $D\in \Lin(U,H)$,
and in particular
it was shown in~\cite{AnaLea14},~\citel{JolLau20}{App.~B}
that for such operators $D$ the problem~\eqref{eq:ACPintro} is non-uniformly stable whenever the
``Schr\"odinger group'' generated by $i L$
with the observation operator $D^\ast$ is observable
in a  certain generalised sense.
We subsequently refer to this property as the 
\emph{Schrödinger group associated with the pair} $(D^\ast,iL)$ \textit{being observable}.
In this paper we show that the same observability condition for the
Schr\"odinger group  generated by $i L$
serves as a sufficient condition for the wavepacket condition and the non-uniform Hautus test for the pair $(B^\ast,A)$. Moreover, our results generalise the results in~\citel{AnaLea14}{Thm.~2.3} and~\citel{JolLau20}{App.~B} to the case of general damping operators $D\in \Lin(U,H_{-1/2})$.
Finally, the second part of Theorem~\ref{thm:FreqDomIntro} was proved in~\citel{Pau17b}{Thm.~6.3} in the special case where $A$ is a diagonal operator with uniform spectral gap and $B\in \Lin(U,X)$.

As our last observability-type concept we introduce \emph{non-uniform observability} of the pair $(B^\ast,A)$, which requires that
  there exist $ \gb\geq 0$ and $\tau,c_\tau>0$ such that
  \eqn{
   \label{eq:NUObsIntro}
 c_\tau \norm{(I-A)^{-\gb}x}_X^2\leq \int_0^\tau \norm{B^\ast T (t) x}_U^2 \,dt, \qquad  x\in \Dom(A),
  }
  where $\SG$ is the contraction semigroup generated by $A$.
	Note that if $\beta=0,$ then non-uniform observability reduces to the classical notion of \emph{exact observability} of  $(B^\ast,A)$.
  The main result of Section~\ref{sec:timedomresults}, Theorem~\ref{thm:NUObstoResgrowth}, shows that if 
$(B^\ast,A)$ is non-uniformly observable with parameter $\gb\in(0,1]$ 
and if $B\in\Lin(U,X)$,
 then the semigroup $\SGB$ is polynomially stable
  and~\eqref{eq:NUStabIntro2} holds for $N\inv(t)=t^{1/(2\beta)}$.
Related generalisations of exact observability have previously been used as sufficient conditions for non-uniform stability of damped second-order systems of the form~\eqref{eq:AbsWEIntro} in~\cite{AmmTuc01,AmmNic15,AmmBch17}.
Moreover, in the 
special case $\beta=1/2$, 
similar generalised observability conditions were used in~\cite{Rus75} and~\citel{Duy07}{Sec.~5} to prove polynomial stability of~\eqref{eq:ACPintro}.
Finally, 
non-uniform stability of~\eqref{eq:AbsWEIntro} 
for a special class of dampings satisfying  $\norm{L^{-\gb}x}\lesssim \norm{D^\ast x}\lesssim \norm{L^{-\gb}x}$ for some $\gb>0$ and all $x\in X$
was studied in~\cite{LiuZha15}, and
 for $DD^\ast = f(L)$ with some function $f$
 in~\cite{DelPat21}.
In Section~\ref{sec:timedomresults} we show that the assumptions in~\cite{LiuZha15} imply non-uniform observability of the pair $(B^\ast,A)$, and our results in particular establish a new proof of~\citel{LiuZha15}{Thm.~2.1}.

  The core of our approach in Sections~\ref{sec:freqdomresults} and~\ref{sec:timedomresults} consists of deriving upper bounds
  for the resolvent norms $\norm{(is-A+BB^\ast)\inv}$, $s\in \R$,
  in terms of the different types of observability-type condition.
In
Section~\ref{sec:Optimality}
we address optimality of our results. In particular, we present an abstract result which describes how sharpness of the resolvent bound can be used to deduce optimality of the decay rate~\eqref{eq:NUStabIntro2} of the semigroup $\SGB$.
In addition, in the case where $A$ is  skew-adjoint we prove a lower bound for resolvent norms of $A-BB^\ast$ in terms of the restrictions of $B^\ast$ to eigenspaces of $A$. Combining these two results allows us to prove that Theorem~\ref{thm:FreqDomIntro} is optimal in several situations of interest, and 
 in particular if $A$ has compact resolvent and uniformly separated eigenvalues.

  In the last part of the paper  we apply our main results to derive rates of energy decay for solutions of selected PDE models, namely wave equations on one- and two-dimensional spatial domains with different types of damping, a fractionally damped Klein--Gordon equation,
  and a weakly damped Euler--Bernoulli beam equation. In most of these examples the wavepackets are simply finite linear combinations of eigenfunctions~\citel{TucWei09book}{Sec.~6.9}.
  In our one-dimensional  wave and beam equations, the eigenvalues of $A$ have a uniform spectral gap and, as a result, we obtain a particularly simple form of the wavepacket condition~\eqref{eq:WPcondIntro}. 
Moreover, our general optimality results in Section~\ref{sec:Optimality} guarantee that the decay estimates we obtain in these cases are sharp.
   On the other hand, for two-dimensional wave equations with viscous damping our results are typically suboptimal.
   This is due to the phenomenon that in certain cases the smoothness of the damping profile improves the degree of polynomial stability~\cite{BurHit07,AnaLea14,DatKle20}, whereas  observability-type conditions do not in general distinguish between smooth and rough dampings.
  Indeed, comparing different types of viscous damping reveals
  natural limitations to optimality of decay rates derived from observability conditions, and we discuss this topic in detail
  in Section~\ref{sec:2Dwaves}.

The paper is organised as follows. In Section~\ref{sec:assumptions} we state the main assumptions on the operators $A$ and $B$ and recall essential results concerning non-uniform stability of strongly continuous semigroups. In Section~\ref{sec:freqdomresults} we present the main results showing that the non-uniform Hautus test and the wavepacket condition imply non-uniform stability of $\SGB$.
In particular, in the second part of Section~\ref{sec:freqdomresults} we reformulate these results specifically for damped second-order systems, and present sufficient conditions for non-uniform stability of~\eqref{eq:AbsWEIntro} based on observability of the Schr\"odinger group.
Next, in Section~\ref{sec:timedomresults} we show that non-uniform observability in the sense of~\eqref{eq:NUObsIntro} implies polynomial stability of~$\SGB$.
In Section~\ref{sec:Optimality} we present a series of abstract results concerning optimality of the stability results in the previous sections.
Finally, in Section~\ref{sec:PDEmodels} we study energy decay for several PDE models.

\textbf{Notation.}
If $X$ and $Y$ are Banach spaces and $A: \Dom(A)\subseteq X\rightarrow Y$ is a linear operator, we denote by $\Dom(A)$, $\ker(A)$ and $\ran(A)$ the domain, kernel and range of $A$, respectively. Moreover, $\gs(A)$, $\gs_p(A)$,
and $\rho(A)$ denote the spectrum, the point spectrum
and the resolvent set of $A$, respectively. 
 The space of bounded linear operators from $X$ to $Y$ is denoted by $\Lin(X,Y)$.   
The notation $X \hookrightarrow Y$ will mean that $X\subseteq Y$ with continuous and dense embedding.
We denote the norm on a space $X$ by $\norm{\cdot}_X$ and its inner product by $\iprod{\cdot}{\cdot}_X$, and we omit the subscripts when there is no risk of ambiguity.
We assume all our Banach and Hilbert spaces to be complex.

Let $\R_+:=[0,\infty)$, and let $\C_\pm$ stand for the open right and left half-planes $\setm{\lambda\in\C }{ \re\, \lambda \gtrless0}$, respectively.
We denote by $\chi_E$  the characteristic function of a set $E.$
For two functions $f: E\subseteq \R\to \R_+$ and $g: \R_+\to \R_+$ we write $f(t)=O(g(\abs{t}))$ if there exist $C$, $t_0>0$ such that $f(t)\leq C g(\abs{t})$ whenever $\abs{t}\geq t_0$. If in addition $g(t)>0$ whenever $\abs{t}\geq t_0$, we write $f(t)=o(g(\abs{t}))$ if $ f(t)/g(\abs{t})\to 0$ as $\abs{t}\to \infty$.
For real-valued quantities $p$ and $q$, we use the notation $p\lesssim q$ if $p\leq C q$ for some constant $C > 0$ which is independent of all the parameters that are free to vary in the given situation.
The notation $p\gtrsim q$ is defined analogously.

\section{Preliminaries}
\label{sec:assumptions}

\subsection{Standing assumptions and well-posedness}
\label{sec:wellposedness}

Let $A:  \Dom(A)\subseteq X\to X$ be the generator of a contraction semigroup $(T (t))_{t \ge 0}$ on a Hilbert space $X$.
All semigroups considered in this paper are strongly continuous.
 For $\gl_0\in \rho(A)$ we equip $\Dom(A)$ with the graph norm  $\norm{x}_{1}=\|(\lambda_0-A)x\|_X$, $x \in \Dom(A),$ and denote the Hilbert space defined in this way by $X_1.$ 
Defining $X_{-1}$ as the completion of $X$ with respect to the norm 
$\norm{x}_{-1}=\|(\lambda_0-A)^{-1}x\|_X$, we obtain a Hilbert space $X_{-1}$
such that $X_1 \hookrightarrow X \hookrightarrow X_{-1}$.
 The operator $A$ has a unique extension $A_{-1}$ to $X_{-1},$ with domain $\Dom(A_{-1})=X$, and $A_{-1}$ generates a contraction semigroup $(T_{-1} (t))_{t \ge 0}$
on $X_{-1}$ which is unitarily equivalent to $(T (t))_{t \ge 0}.$ In particular, $A_{-1}\in \Lin(X,X_{-1})$ and the operators $A$, 
$A_{-1}$ are unitarily equivalent and thus have the same spectrum.   
Moreover, any $S \in \Lin(X)$ commuting with $A$ has a (unique) continuous extension to an operator in
$\Lin(X_{-1}),$ unitarily equivalent to $S$; see~\citel{TucWei09book}{Sec.~2.10}.

To state our main assumptions, we let
$V$ be a Hilbert space such that
$X_1\subseteq V\subseteq X$ with continuous embeddings. 
In particular,
 $V$ is dense in $X$ and
  we consider the Gelfand triple $V\hookrightarrow X\hookrightarrow V^\ast$, where $V^\ast$ is the dual of $V$ with respect to the pivot space $X$~\cite[Sec.~2.9]{TucWei09book}. 
We denote by $\iprod{\cdot}{\cdot}_{V^\ast,V}:  V^\ast\times V\to \C$ the unique continuous extension of the inner product of $X$, and we define
$V_A:=\setm{x \in V}{ A_{-1}x \in V^\ast}$.
In the following we state our standing assumptions on the operators $A: \Dom(A)\subseteq X\to X$ and 
$B\in \Lin(U,X_{-1})$, where $U$ is another Hilbert space.

\begin{assumption}
  \label{ass:ABBass}
 The operators $A: \Dom(A)\subseteq X\to X$ and $B\in \Lin(U,X_{-1})$ have the following properties.
\begin{itemize}
  \item[\textup{(H1)}] 
The generator $A$ of the contraction semigroup $\SG$ satisfies
    $\re \iprod{A_{-1}x}{x}_{V^\ast,V}\leq 0$ for all 
$x \in V_A$.
  \item[\textup{(H2)}] We have $B\in \Lin(U,V^\ast)$ and 
	    $\ran( (\gl_0-A_{-1})\inv B)\subseteq V$ for some (or equivalently all)  $\gl_0\in\rho(A)$.
  \end{itemize}
\end{assumption}

Assumption~\ref{ass:ABBass} in particular requires that $\ran(B)\subseteq X_{-1}\cap V^\ast$.
Note that when $A$ is not skew-adjoint, the space $V^\ast$ is not necessarily contained in $X_{-1}$; it is instead a subspace of $X_{-1}^d$, the first extrapolation space for the adjoint $A^\ast$~\citel{TucWei09book}{Sec.~2.10}. 
If $B\in \Lin(U,X)$, which we will refer to as $B$ being \emph{bounded},
then Assumption~\ref{ass:ABBass} is automatically satisfied for any generator $A$ of a contraction semigroup $\SG$ with the choices $V=V^\ast=X$.

We write $B^\ast \in \Lin(V,U)$ for the adjoint of $B\in \Lin(U,V^\ast)$, where $V$ is identified with $(V^\ast)^\ast$ via the pivot duality through $X$.
In particular, 
\eq{
\iprod{Bu}{x}_{V^\ast,V}
=\iprod{u}{B^\ast x}_U, \qquad x\in V,\; u\in U.
}
Moreover, (H2) in Assumption~\ref{ass:ABBass} and the closed graph theorem imply that $B^\ast (\gl-A_{-1})\inv B\in \Lin(U)$ for all $\gl\in\rho(A)$.
We formally define the operator $A_B=A_{-1}-BB^\ast$ on  $X$ by 
\begin{subequations}
\label{defn_of_ab}
\begin{align}
A_Bx&=A_{-1}x - BB^\ast x, \qquad x \in \Dom(A_B),\\
\Dom(A_B)&=\setm{x \in V}{ A_{-1}x - BB^\ast x \in X}.
\end{align}
\end{subequations}
As shown in the following lemma, Assumption~\ref{ass:ABBass} guarantees that $A_B$ generates a contraction semigroup $\SGB$ on $X$. In particular, the orbits of this semigroup are the solutions of the abstract Cauchy problem
\begin{subequations}
\label{eq:ACPmain}
  \begin{align}
   \dot{x}(t) &= A_B x(t), \qquad t\ge0, \\x(0) &= x_0\in X.
  \end{align}
\end{subequations}
For  $x_0\in X$ the orbit  $x(\cdot)=T_B(\cdot)x_0$ is a \emph{mild solution} of~\eqref{eq:ACPmain}, and it is a \emph{classical solution} if and only if
 $x_0\in \Dom(A_B)$~\citel{AreBat11book}{Ch.~3}.
  
\begin{lemma}
  \label{lem:ABBsemigroup}
	Let $A$ and $B$ satisfy Assumption~\textup{\ref{ass:ABBass}}. Then the operator $A_B$ defined in~\eqref{defn_of_ab} generates a strongly continuous contraction semigroup $\SGB$ on $X$.
Moreover, we have $\rho(A)\cap \overline{\C_+} \subseteq \rho(A_B)\cap \overline{\C_+},$ 
  \eqn{
    \label{eq:ABBiprod}
    \re \iprod{(is-A_B)x}{x} \geq \norm{B^\ast x}^2, \qquad s\in \R, ~x\in \Dom(A_B),
  }
and
  \eqn{
\label{eq:RBboundwrtBRB}
    \norm{(\gl-A_{-1})\inv B}^2\leq \frac{1}{\re \gl}\norm{B^\ast (\gl-A_{-1})\inv B}, \qquad \gl\in\C_+.
  }
\end{lemma}

\begin{proof}
  First note that if $x\in X$ and $u\in U$ are such that $A_{-1}x+Bu=:y\in X$, then condition (H2) implies that for any $\gl_0\in\rho(A)$ we have $x=(\lambda_0-A_{-1})^{-1}(\gl_0 x-y+Bu)\in V$ and $A_{-1}x=y-Bu\in V^\ast$. 
	Thus $x\in V_A$ and condition (H1) implies that
\begin{subequations}
    \label{eq:ABBdissipativity}
    \eqn{
      \re \iprod{A_{-1}x+Bu}{x}_X
      &=\re \iprod{A_{-1} x}{x}_{V^\ast,V}
      +\re \iprod{Bu}{x}_{V^\ast,V}\\
      &\leq\re \iprod{B^\ast x}{u}_U.
    }
\end{subequations}
Let $s\in\R$ and $x\in \Dom(A_B)$, and choose $u=-{B^\ast}x$. Then~\eqref{eq:ABBdissipativity} immediately implies~\eqref{eq:ABBiprod}.
  In particular, $A_B$ is dissipative.
  
	To prove that $\rho(A)\cap \overline{\C_+} \subseteq \rho(A_B)\cap \overline{\C_+}$,
 fix  
$\gl\in\rho(A)\cap \overline{\C_+},$ let $u\in U$
 and choose $x=(\lambda-A_{-1})^{-1}Bu$. Then $A_{-1}x+Bu =\lambda (\lambda-A_{-1})^{-1}Bu \in X$ and~\eqref{eq:ABBdissipativity} implies that
  \eq{
 (\re \gl) \norm{(\lambda-A_{-1})^{-1}Bu}^2\leq
    \re \iprod{B^\ast(\lambda-A_{-1})^{-1}Bu}{u}.
  }
In particular, this inequality implies~\eqref{eq:RBboundwrtBRB}.
  Moreover, this estimate shows that the operator $G(\lambda):=B^\ast(\lambda-A_{-1})^{-1}B\in \Lin(U)$ satisfies
  $\re G(\gl)\geq 0,$
and consequently  $I+G(\gl)$ is boundedly invertible in $\Lin(U).$
A direct verification shows  that $\gl-A_B$ has bounded inverse given by
  \eqn{
    \label{eq:KatoPert}
   (\lambda-A_B)^{-1} = (\lambda-A_{-1})^{-1}(I-B(I+G(\gl))\inv B^\ast \RA[\gl]),
  }
	and we deduce the required spectral inclusion $\rho(A)\cap \overline{\C_+} \subseteq \rho(A_B)\cap \overline{\C_+}$.
  	In particular, $A_B$ is closed. Since $A_B$ is dissipative and $\C_+ \subseteq \rho(A_B)$, its domain is dense in $X$ by~\citel{TucWei09book}{Prop.~3.1.6}.
	Hence
  $A_B$ is $m$-dissipative, and  by the Lumer--Phillips theorem it generates a strongly continuous contraction semigroup on $X$.
 \end{proof}

\begin{remark}\label{extension}
If Assumption~\ref{ass:ABBass} holds, then for every $\lambda \in \C_+$
 the right-hand side of~\eqref{eq:KatoPert} extends uniquely to a mapping from the (not necessarily closed) subspace $X + \ran (B)$ of $X_{-1}$ to $X$, simply by replacing $(\gl-A)\inv$ by $(\gl-A_{-1})\inv$.
We use this formula to define the extension of 
$(\lambda-A_B)^{-1}$
to an operator
 $(\lambda-A_B)^{-1}: X + \ran (B) \to X$.
In particular, we have
\begin{align*}
 (\lambda -A_B)^{-1} B 
 & = (\lambda -A_{-1})^{-1} B (I+G(\lambda ))^{-1} \in {\mathcal L} (U,X) 
\end{align*}
for $\gl\in\C_+$.
The identity $(\lambda -A_B )^{-1} = (I + (1 - \lambda )(\lambda -A_B)^{-1} ) (1 -A_B)^{-1}$ 
 shows that also for arbitrary $\gl\in\rho(A_B)$ the operator
 $(\lambda -A_B)^{-1}$ extends uniquely to a mapping from $X+{\rm Ran}\,  B$ into $X$, and that $(\lambda -A_B)^{-1} B \in {\mathcal L} (U,X)$. 
For $\gl\in\rho(A_B)$ and $u\in U$ we have 
$(\gl-A_B)\inv Bu\in V$ and 
\eq{
(\gl-A_{-1}+BB^\ast )(\gl-A_B)\inv Bu = Bu,
}
and if $x\in V$ is such that $(\gl - A_{-1}+BB^\ast) x\in X+\ran(B) $ (in particular, if $x\in \Dom(A)$), then 
\eq{
(\gl-A_B)\inv (\gl-A_{-1}+BB^\ast )x = x.
}
\end{remark}

\begin{remark}
\label{rem:ABBAltDomain}
Define $X_B := \Dom(A)+\ran((\gl_0-A_{-1})^{-1} B)$,
where $\gl_0\in\rho(A)$.
The space $X_B$ is
  independent of the choice of $\gl_0$, and $X_B\subseteq V$ by Assumption~\ref{ass:ABBass}. 
Moreover, 
the domain of $A_B$ has the useful alternative characterisation
\eq{
\Dom(A_B)=\setm{x \in X_B}{ A_{-1}x + BB^\ast x \in X}.
}
Here the non-trivial inclusion can be verified as in the beginning of the proof of Lemma~\ref{lem:ABBsemigroup}.
\end{remark}

Our results in Section~\ref{sec:freqdomresults} employ a parameter which describes the growth of the operator-valued function $\gl\mapsto B^\ast (\gl-A_{-1})\inv B$ on a vertical line in~$\C_+$. In particular, we take $\mu:\R\to [r_0,\infty)$, $r_0>0$, to be a function such that
\eqn{
\label{eq:BRBboundPrelims}
\norm{B^\ast (1+is-A_{-1})\inv B}\leq \mu(s), \qquad s\in\R,
}
and the rate of growth of $\mu$ affects the resolvent estimates in our results.
The following lemma shows that $\mu$ can be taken to be uniformly bounded whenever 
 $B\in \Lin(U,X)$, and that estimate~\eqref{eq:BRBboundPrelims} always holds for a quadratic function $\mu$.

\begin{lemma}
\label{lem:BRBboundProps}
If $A$ and $B$ satisfy Assumption~\textup{\ref{ass:ABBass}}, then the following hold.
\begin{itemize}
\setlength{\itemsep}{.2ex}
\item[\textup{(a)}] The estimate~\eqref{eq:BRBboundPrelims} holds for $\mu(s)= c(1+s^2)$, $s\in\R$, for some $c>0$.
\item[\textup{(b)}] If $B\in \Lin(U,X)$, then~\eqref{eq:BRBboundPrelims} holds for $\mu(s)\equiv c$ with some $c>0$.
\item[\textup{(c)}] If~\eqref{eq:BRBboundPrelims} holds, then $\norm{(1+is-A_{-1})\inv B}\leq \mu(s)^{1/2}$ for $s\in\R$. 
\end{itemize}
\end{lemma}

\begin{proof}
Part (b) follows directly from the assumption that $A$ generates a contraction semigroup, which implies that $\norm{(1+is-A)\inv}\leq 1$ for all $s\in\R$. Moreover, part (c) follows from~\eqref{eq:RBboundwrtBRB} in Lemma~\ref{lem:ABBsemigroup}.
To prove part (a), fix $s\in\R$ and let
$R = (1+is-A_{-1})\inv$.
Using the identity
$R = (I-A_{-1})\inv -is (I-A)\inv R$ we see that
\eq{
\norm{B^\ast R B}
&\leq \norm{B^\ast (I-A_{-1})\inv B} + \abs{s}\norm{B^\ast(I-A)\inv}\norm{ R B}
\lesssim 1 + \abs{s}\norm{ R B}
}
and similarly
\eq{
\norm{ R B}
& \leq \norm{ (I-A_{-1})\inv B} + \abs{s}\norm{(1+is-A)\inv}  \norm{ (I-A_{-1})\inv B}
\lesssim 1+\abs{s}.
}
Together these estimates give $\norm{B^\ast (1+is-A_{-1})\inv B}\lesssim 1+s^2$, $s\in\R$.
\end{proof}

Estimates of the form~\eqref{eq:BRBboundPrelims} have been studied extensively in the control theory literature. In particular,
for a \emph{bounded} function $\mu$ the estimate in~\eqref{eq:BRBboundPrelims} is known as the property of 
\emph{well-posedness} of the operator-valued ``transfer function'' $\gl\mapsto B^\ast (\gl-A_{-1})\inv B$; see~\cite{Sal87a,GuoLuo02a,Sta02,TucWei14}. 
This property has been verified in the literature for several different types of PDE systems; see for instance~~\cite{AmmTuc01,GuoLuo02a,LasTri03, TucWei14, AmmNic15}.
As shown in the next lemma, validity of \eqref{eq:BRBboundPrelims} for a bounded function $\mu$ moreover implies that $B^\ast$ is an \emph{admissible observation operator} for the semigroup $(T (t))_{t \ge 0}$, which is to say that
$B^* T(\cdot)x \in L^2(0,\tau; U)$ for all $x \in D(A)$ and $\tau >0$. This property will be useful in discussing the relationship between our results and existing results in the literature.
In addition, the following lemma shows that under the same assumption $B$ is an \emph{admissible control operator}
in the sense that
$\int_{0}^{\tau}T_{-1}(\tau-t)Bu(t)\, dt\in X$ for all $u \in L^2(0,\tau; U)$ and $\tau>0$.

\begin{lemma}\label{lem:Bstaradmissible}
 Let $A$ and $B$ satisfy Assumption~\textup{\ref{ass:ABBass}}.
If~\eqref{eq:BRBboundPrelims} is satisfied for a bounded function $\mu$, then $B$ and $B^\ast$ are, respectively, admissible control and observation operators for the semigroup $\SG$ generated by $A$.
\end{lemma}

\begin{proof}
Since $A$ and $B$ satisfy Assumption~\ref{ass:ABBass}, it is straightforward to verify that the operator $S: \Dom(S)\subseteq X\times U\to X\times U$
defined by
\eq{
 S=\pmat{A_{-1} & B \\ B^\ast & 0}, \qquad  \Dom(S)=\left \{\pmat{x\\u}\in X\times U: A_{-1}x + Bu \in X\right \}
}
 is a system node on $(U,X,U)$ in the sense of~\citel{Sta02}{Def.~2.1}. 
Moreover, estimate~\eqref{eq:ABBdissipativity} for $(x,u)\in \Dom(S)$ and~\citel{Sta02}{Thm.~4.2} imply that the system node $S$ is impedance passive in the sense of~\citel{Sta02}{Def.~4.1}.
The transfer function of the system node $S$ is given by $G(\gl)= B^\ast (\gl-A_{-1})\inv B$ for $\gl\in\rho(A)$.
Hence the assumption that~\eqref{eq:BRBboundPrelims} is satisfied for a bounded function $\mu$ together with~\citel{Sta02}{Thm.~5.1} imply that the system node $S$ is well-posed in the sense of~\citel{Sta02}{Def.~2.6}. 
In particular, 
$B\in \Lin(U,X_{-1})$ and 
 $B^\ast\in \Lin(X_1,U)$ are, respectively, admissible control and observation operators for the semigroup generated by $A$.
\end{proof}

\subsection{Damped second-order problems}\label{second_order_setup}

In this section we wish to use the framework introduced in Section~\ref{sec:wellposedness} to study a class of abstract
 second-order equations with damping. 
To this end, we consider a positive self-adjoint and boundedly invertible operator $L: \Dom(L)\subseteq H\to H$ on a Hilbert space $H$. 
We write $H_1$ for the domain of $L$ equipped with the norm $\norm{x}_{H_1} = \norm{Lx}_H$, $x\in H_1$, and define $H_{1/2}$ to be the domain of the fractional power $L^{1/2}$ equipped with the norm $\norm{x}_{H_{1/2}} = \norm{L^{1/2}x}_H$, $x\in H_{1/2}$.
 We denote by $H_{-1/2}$
 the dual of $H_{1/2}$
with respect to the pivot space $H$.
For an operator
$D\in \Lin(U,H_{-1/2})$, where $U$ is another Hilbert space, we consider the differential equation 
\begin{subequations}
\label{dwe}
  \begin{align}
    &\ddot{w}(t)+L w(t)+DD^\ast \dot{w}(t)=0, \qquad t \ge 0, \\
		&w(0)=w_0\in H_{1/2}, \qquad \dot{w}(0)=w_1\in H.
  \end{align}
\end{subequations}
Such systems have been studied extensively; see for instance  
	\cite{LaTri00,GuoLuo02a,AnaLea14,AmmNic15} and the references therein. 
This class of systems in particular contains the wave equation with viscous damping on a two-dimensional bounded and convex domain $\Omega\subseteq \R^2$ with (necessarily Lipschitz) boundary $\partial \Omega$,
\eq{
    w_{tt}(\xi,t)-\Delta w(\xi,t)+b(\xi)^2 w_t(\xi,t)=0, \qquad t > 0,
}
where 
   $b\in \Lp[\infty](\Omega)$ is a non-negative function and we impose Dirichlet boundary conditions.
	In this situation we may choose $H=U=\Lp[2](\Omega)$, let 
$L=-\Delta$ be the (negative) Laplacian on $H$ with Dirichlet boundary conditions, and define
  $D\in \Lin(U,H)$  by $D u= bu$ for all $u\in U$.
This partial differential equation will be studied in detail in Section~\ref{sec:2Dwaves}.

In order to formulate the abstract system~\eqref{dwe} as a first-order abstract Cauchy problem of the form~\eqref{eq:ACPmain}, 
 we proceed as in~\citel{TucWei14}{Sec.~6}.
In particular, we let  $x(\cdot)=(w(\cdot),\dot{w}(\cdot))$ 
and take $X$ to be the Hilbert space  $X = H_{1/2}\times H$ equipped with the inner product  
\ieq{
\iprod{x}{y}_{X}=\iprod{x_1}{y_1}_{H_{1/2}} + \iprod{x_2}{y_2}_{H}
}
for $x=(x_1, x_2)$, $y=(y_1,y_2)\in X$.
The operators $A: \Dom(A)\subseteq X\to X$ and $B:U\to X_{-1}$ in Section~\ref{sec:wellposedness} are defined as
\eq{
   A = \pmat{0 & I \\ -L & 0} 
\qquad \mbox{and} \qquad 
 B = \pmat{0 \\ D} 
}
with $\Dom(A)=H_{1}\times H_{1/2}$ and
  $X_{-1}=H\times H_{-1/2}$.
Then $A$ is a skew-adjoint operator and thus it generates a unitary group	$(T (t))_{t \in \mathbb R}$ on  $X$.
  We may choose 
    $V=H_{1/2}\times H_{1/2}$, which has the corresponding dual space $V^\ast  =H_{1/2}\times H_{-1/2}$.
  The dual pairing of $V$ and $V^\ast$ is given by
  \eq{
    \iprod{x}{y}_{V^\ast,V} = \iprod{x_1}{y_1}_{H_{1/2}} + \iprod{x_2}{y_2}_{H_{-1/2},H_{1/2}}
  }
	for $x=(x_1, x_2)\in V^\ast$, $y=(y_1,y_2)\in V$.

Condition (H1) is satisfied since
  $\re \iprod{A_{-1}x}{x}_{V^\ast, V}=0$ for $x\in V = V_A$, as is easily verified.
In addition, we have  both
  $B\in \Lin (U,X_{-1})$ and
  $B\in \Lin (U,V^\ast)$.
For $\gl\in\rho(A)$ the resolvent of $A$ has the form
  \eq{
    (\gl-A)\inv = \pmat{\gl(\gl^2+L)\inv & (\gl^2+L)\inv\\-L(\gl^2+L)\inv&\gl(\gl^2+L)\inv},
  }
 and an analogous formula holds for $(\gl-A_{-1})\inv$. Therefore we in particular have
$\ran (A_{-1}^{-1} B) \subseteq V$, and thus 
 condition (H2) in Assumption~\ref{ass:ABBass} is satisfied. 
By Lemma~\ref{lem:ABBsemigroup} the operator $A_B$ defined in~\eqref{defn_of_ab} generates a contraction semigroup on $X$, as also shown
in~\cite[Prop.~7.6.1]{LaTri00} and~\citel{GuoLuo02a}{Thm.~1}.

It is straightforward to see that $B^\ast = (0,D^\ast)\in \Lin(V,U)$, where $D^\ast \in \Lin(H_{1/2},U)$ is the adjoint of $D\in \Lin(U,H_{-1/2})$.
Therefore the formula for $(\gl-A_{-1})\inv$ implies that
  \eq{
    B^\ast (\gl-A_{-1})\inv B = \gl D^\ast (\gl^2+L_{-1})\inv D, \qquad \gl\in\C_+.
  }
Moreover, 
$\norm{D^\ast ( (1+is)^2+L_{-1})\inv D}
=\norm{D^\ast ( (1-is)^2+L_{-1})\inv D} $, $s\in\R$.
Hence if 
\eqn{
\label{eq:BRBboundSecondOrder}
 s\,\norm{D^\ast ( (1+is)^2+L_{-1})\inv D}\leq \mu_0(s), \qquad s\in\R_+,
}
for some $\mu_0:\R_+\to [r_0',\infty)$, $r_0'>0$, then 
condition~\eqref{eq:BRBboundPrelims} holds for some even function $\mu:\R\to [r_0,\infty)$, $r_0>0$, satisfying $\mu(s)\lesssim \mu_0(\abs{s})$, $s\in\R$. 
Conversely, property~\eqref{eq:BRBboundPrelims} implies the above estimate for $\mu_0:\R_+\to [r_0,\infty)$ defined by  $\mu_0(s)=\mu(s)$, $s\in\R_+$.
The estimate~\eqref{eq:BRBboundSecondOrder} has been shown to hold for a bounded function 
 $\mu_0$
for several PDE models having   
our second-order form~\eqref{dwe};
 see for instance~\cite{AmmTuc01,GuoLuo02a,LasTri03}. On the other hand, as shown in~\cite{LasTri81} and~\citel{Wei03}{Sec.~4}, unbounded functions $\mu_0$ are needed in some cases including wave equations with boundary damping.
In the case where $D\in \Lin(U,H)$, we have $B\in \Lin(U,X)$ and, in particular,~\eqref{eq:BRBboundPrelims} holds for a bounded function~$\mu$ by  Lemma~\ref{lem:BRBboundProps}.
 
\subsection{Resolvent estimates and non-uniform stability}

Throughout the paper we are interested in finding sufficient conditions for the spectrum of the operator $A_B$ defined in~\eqref{defn_of_ab} to be contained in $\C_-$ and in obtaining a resolvent estimate of the form
\eqn{
  \label{eq:ABBresestimate}
  \norm{(is-A_B)\inv} \leq N(s), \qquad   s\in\R,
}
for an explicit function $N: \R\to (0,\infty)$. 

In order to pass from the resolvent estimate~\eqref{eq:ABBresestimate} to sharp rates of decay for the semigroup $\SGB$
   we make use of the following abstract result from~\citel{RozSei19}{Thm.~3.2}; see~\citel{BorTom10}{Thm.~2.4} for the case where $N$ is a polynomial.
Recall that a measurable function $N:\R_+\to(0,\infty)$ is said to have \emph{positive increase} if there exist constants $\alpha,s_0>0$ and $c_\alpha\in(0,1]$ such that
\begin{equation}\label{eq:pos_inc}
\frac{N(\lambda s)}{N(s)}\ge c_\alpha\lambda^\alpha,\qquad\lambda\ge1,\ s\ge s_0.
\end{equation}
When $N:\R_+\to(0,\infty)$ is non-decreasing but not necessarily strictly increasing we take $N\inv$ to denote the right-continuous right-inverse of $N$ defined by $N\inv(t)=\sup\setm{s\ge0}{N(s)\le t}$ for $t\ge N(0)$.

\begin{theorem}[{\textup{\cite[Thm.~3.2]{RozSei19}}}]
  \label{thm:sg_decay}
  Let $\SG$ be a strongly continuous contraction semigroup on a Hilbert space $X$, with generator $A$.	
If $i\R\subseteq \rho(A)$
and if $\norm{(is-A)\inv}\leq N(\abs{s})$  for all $s\in\R$, where $N:\R_+\to(0,\infty)$ is a continuous non-decreasing  function of  positive increase, then
\begin{equation}\label{eq:Minv}
  \|T (t)A^{-1}\|=
O\biggl(\frac{1}{N\inv(t)}\biggr)
,\qquad t\to\infty.
\end{equation}
\end{theorem}

The class of functions satisfying~\eqref{eq:pos_inc} contains all regularly varying functions $N:\R_+\to(0,\infty)$ which have positive index~\cite[Sec.~2]{RozSei19}, and in particular it contains any measurable function $N:\R_+\to(0,\infty)$ defined for all sufficiently large values of $s\ge0$ by $N(s)=s^\alpha\log(s)^\beta$, where $\alpha>0$ and  $\beta\in\R$.
As discussed in~\cite{BorTom10,RozSei19,DebSei19b}, Theorem~\ref{thm:sg_decay} is optimal in several senses, and  for a large class of semigroups the condition of positive increase is even a \emph{necessary} condition for~\eqref{eq:Minv} to hold.

\begin{remark}\label{rem:BT}
If
$N(s)=C(1+\abs{s})^\alpha$ in Theorem~\ref{thm:sg_decay} for some constants $C,\alpha>0$, then~\eqref{eq:Minv} becomes $\|T (t)A^{-1}\|=O(t^{1/\alpha})$ as $t\to\infty$. It is shown in~\cite[Thm.~2.4]{BorTom10} that
for individual orbits of $\SG$
one obtains the even better decay rate $\|T (t)x\|=o(t^{-1/\alpha})$ as $t\to\infty$ for all $x\in \Dom(A)$.
\end{remark}

In subsequent sections we shall repeatedly make use of  the following lemma when proving resolvent estimates;
 see e.g.~\citel{AreBat11book}{Prop.~4.3.6} for a proof of a more general result.

\begin{lemma}\label{sp_contr}
Let $A$ be the generator of a contraction semigroup on a Hilbert space $X$ and let $s \in \R$.  
If there exists $c_s>0$ such that
\begin{equation}\label{lower_bound}
\norm{x}\leq c_s \norm{(is-A)x},
\qquad x \in \Dom(A), 
\end{equation}
then $is \in \rho (A)$ and $\| (is -A)^{-1} \|\le c_s.$
\end{lemma}

We shall also make use of the following lemma on adjoints in the case where $A$ is a skew-adjoint operator.
Here the composition $(\gl-A_B)\inv B$ in part~(b) is defined as in Remark~\ref{extension}.

\begin{lemma}
\label{lem:RBadjoints}
Let $A$ and $B$  satisfy Assumption~\textup{\ref{ass:ABBass}} and assume that $A$ is skew-adjoint.
\begin{itemize}
\item[\textup{(a)}] We have
\eq{
((\gl-A_{-1})\inv B)^\ast = B^\ast (\overline{\gl}+A)\inv , \qquad \gl\in\rho(A).
}
\item[\textup{(b)}] If $\re \iprod{A_{-1}x}{x}_{V^\ast,V}=0$ for all $ x\in V_A$, then the adjoint $A_B^\ast$ of $A_B$ defined in~\eqref{defn_of_ab} is given by 
\begin{subequations}
\label{eq:ABast}
\eqn{
A_B^\ast x&= -A_{-1} x-BB^\ast x, \qquad  x\in \Dom(A_B^\ast),\\
\Dom(A_B^\ast ) &= \setm{x\in V}{A_{-1}x+BB^\ast x\in X}.
 }
\end{subequations}
Moreover,
$((\gl-A_B)\inv B)^\ast =B^\ast (\overline{\gl}-A_B^\ast)\inv$ for $\gl\in\rho(A_B)\cap\overline{\C_+}$.
\end{itemize}
\end{lemma}

\begin{proof}
To prove part (a),
let $\gl\in\rho(A)$,  $x\in X$ and $u\in U$.
By density of $X$ in $X_{-1}$, we may find a sequence
$(y_k)_{k\in\N}\subseteq X$ such that $\norm{y_k-Bu}_{X_{-1}}\to 0$ as $k\to\infty$. 
Since $(\overline{\lambda}+A_{-1})\inv \in \Lin(X_{-1},X)$, we  also have
\eq{
\norm{(\overline{\lambda}+A_{-1})\inv Bu-(\overline{\lambda}+A)\inv y_k}_{X}\to 0,\qquad k\to\infty.
}
Hence
 the definition of $B^\ast$ and skew-adjointness of $A$ imply that
\eq{
\MoveEqLeft\iprod{u}{B^\ast(\gl-A)\inv x}_U
=\iprod{Bu}{(\gl-A)\inv x}_{V^\ast,V}
=\iprod{Bu}{(\gl-A)\inv x}_{X_{-1},X_1}\\
&=\lim_{k\to \infty}\iprod{y_k}{(\gl-A)\inv x}_{X_{-1},X_1}
=\lim_{k\to \infty}\iprod{y_k}{(\gl-A)\inv x}_X\\
&=\lim_{k\to \infty}\iprod{(\overline{\gl}+A)\inv y_k}{ x}_X
=\iprod{(\overline{\gl}+A_{-1})\inv Bu}{ x}_X.
}
Since $x$ and $u$ were arbitrary, we have $( B^\ast (\gl-A)\inv)^\ast =(\overline{\gl}+A_{-1})\inv B $.

To prove (b),
we define 
\eq{
\widetilde{A}_B x&= -A_{-1} x-BB^\ast x, \qquad  x\in \Dom(\widetilde{A}_B),\\
\Dom(\widetilde{A}_B) &= \setm{x\in V}{A_{-1}x+BB^\ast x\in X}.
 }
Since $-A$ and $B$ satisfy Assumption~\ref{ass:ABBass} (with the same choice of $V$), $\widetilde{A}_B$ generates a contraction semigroup on $X$ by Lemma~\ref{lem:ABBsemigroup}.
The assumption that $\re \iprod{A_{-1}x}{x}_{V^\ast,V}=0$ for $x\in V_A$ and a simple polarisation argument imply that
 $ \iprod{A_{-1}x}{y}_{V^\ast,V}=-\iprod{x}{A_{-1}y}_{V,V^\ast}$ for $x,y\in V_A$, where we define $\iprod{z_1}{z_2}_{V,V^\ast} := \overline{\iprod{z_2}{z_1}_{V^\ast,V}}$ for $z_1\in V$, $z_2\in V^\ast$.
Hence if $x\in \Dom(A_B)\subseteq V_A$ and $y\in \Dom(\widetilde{A}_B)\subseteq V_A$, then
\eq{
\iprod{A_B x}{y}_X
=\iprod{A_{-1}x-BB^\ast x}{y}_{V^\ast,V}
=\iprod{x}{(-A_{-1}-BB^\ast)y}_{V,V^\ast}
=\iprod{x}{\widetilde{A}_By}_X.
}
Thus $A_B^\ast $ is an extension of $\widetilde{A}_B$,
and since $\rho(A_B^\ast)\cap \rho(\widetilde{A}_B)\neq \varnothing$ we further see that
$A_B^\ast =\widetilde{A}_B$.

Now let $\gl\in\rho(A_B)\cap\overline{\C_+}$, $x\in X$ and $u\in U$.
We have $(\overline{\gl}-A_B^\ast)\inv x\in \Dom(A_B^\ast)\subseteq V_A$. Moreover,
by Remark~\ref{extension} we have $(\gl-A_B)\inv Bu\in V_A$
and
\eq{
\MoveEqLeft\iprod{u}{B^\ast (\overline{\gl}-A_B^\ast )\inv x}_U
=\iprod{Bu}{ (\overline{\gl}-A_B^\ast)\inv x}_{V^\ast,V}\\
&=\iprod{(\gl-A_{-1}+BB^\ast)(\gl-A_B)\inv Bu}{ (\overline{\gl}-A_B^\ast)\inv x}_{V^\ast,V}\\
&=
\iprod{(\gl-A_B)\inv Bu}{ (\overline{\gl}+A_{-1} + BB^\ast)(\overline{\gl}-A_B^\ast)\inv x}_{V,V^\ast}\\
&=\iprod{(\gl-A_B)\inv Bu}{  x}_X.
}
Since $\gl\in\rho(A_B)\cap\overline{\C_+}$, $x\in X$ and $u\in U$ were arbitrary, the proof is complete.
\end{proof}

The following proposition 
 presents some general consequences of resolvent estimates of the form~\eqref{eq:ABBresestimate}.
In particular, the last part concerns the effect of scaling the operator $B$ on the resulting resolvent estimate.
Once again, the composition $(is-A_B)\inv B$ for $s\in\R$ is defined as in Remark~\ref{extension}.
As noted in Section~\ref{second_order_setup}, the additional assumptions in~(b) are in particular satisfied for the class of second-order systems considered there.

\begin{lemma}
  \label{lem:ABBresgrowthlemma}
  	Let $A$ and $B$  satisfy Assumption~\textup{\ref{ass:ABBass}} and let $A_B$ be as defined in~\eqref{defn_of_ab}.
	If $i\R\subseteq \rho(A_B)$ 
	and 
if $N:\R\to (0,\infty)$ is such that 
\textup{\eqref{eq:ABBresestimate}} holds,
 then the following are true.
  \begin{enumerate}
    \item[\textup{(a)}] For  $s\in\R$, we have
      \eq{
	\norm{B^\ast (is-A_B)^{-1}}&\leq N(s)^{1/2},\\
\norm{\RB[is]B}&
\lesssim 1+ N(s), \\
	\norm{B^\ast \RB[is]B}&\leq 1.
      }

    \item[\textup{(b)}] If either $B\in \Lin(U,X)$, or 
\eq{
A^\ast =-A \qquad \mbox{and} \qquad \re \iprod{A_{-1}x}{x}_{V^\ast,V}=0, \quad x\in V_A,
}
then
\ieq{
\norm{(is-A_B)^{-1}B}\leq N(s)^{1/2} 
} for all $s\in\R$.
    \item [\textup{(c)}]  
Let $\kappa>0$ and consider the operator $A_{B, \kappa }: \Dom(A_{B,\kappa })\subseteq X\to X$ defined by
\eq{
A_{ B,\kappa} x &= A_{-1}x-\kappa^2 BB^\ast x, \qquad x\in \Dom(A_{ B,\kappa}), \\
\Dom(A_{ B,\kappa}) &= \setm{x\in V}{A_{-1}x-\kappa^2 BB^\ast x \in X}.
}
Then 
       $i\R\subseteq \rho(A_{ B,\kappa})$ and 
	$\norm{(is-A_{B,\kappa })\inv}\lesssim 1+N(s)^2$ for $s\in\R$. If the assumptions in part \textup{(b)} hold, then $\norm{(is-A_{B,\kappa })\inv}\lesssim N(s)$ for  $s\in\R$.
  \end{enumerate}
\end{lemma}

\begin{proof}
To prove the first estimate in~(a), fix $s\in\R$ and $y\in X$, and let $x=(is-A_B)^{-1}y\in\Dom(A_B)$. Then $\norm{x}\leq N(s)\norm{y}$ and
    $(is-A_B)x=y$, and hence, by~\eqref{eq:ABBiprod} in Lemma~\ref{lem:ABBsemigroup},
    \eq{
      \norm{B^\ast x}^2
      \leq \re \iprod{y}{x}
      \leq \norm{y}\norm{x}
      \leq N(s)\norm{y}^2.
    }
    Since  $s\in\R$ and $y\in X$ were arbitrary, the first estimate in part (a) follows.

To prove the second and third estimates in~(a), we begin by deriving a preliminary estimate. 
Let $\gl\in\overline{\C_+}$ and $u\in U$. 
If we define the composition $(\gl-A_B)\inv B$ as in Remark~\ref{extension} and let
$x = (\gl-A_B)\inv Bu\in X$, then
Remark~\ref{extension} implies that $x\in V$ and
\ieq{
A_{-1} x + B(u-B^\ast x)
= \gl x\in X.
}
Estimate~\eqref{eq:ABBdissipativity} in the proof of Lemma~\ref{lem:ABBsemigroup} shows that
    \eq{
      (\re \gl) \norm{x}^2& = \re \iprod{A_{-1}x+B(u-B^\ast x)}{x}_X
       \leq \re \iprod{B^\ast x}{u-B^\ast x}_U\\
      &= \re \iprod{B^\ast x}{u}_U - \norm{B^\ast x}_U^2.
    }
    In particular,  
$\norm{B^\ast (\gl-A_B)\inv Bu}=\norm{B^\ast x}\leq \norm{u}$ for all $\gl\in \overline{\C_+}$, which implies the third estimate in~(a).
On the other hand, for $\gl = 1+is$ with $s\in\R$, the same estimate shows that 
\eq{
\norm{(1+is-A_B)\inv Bu}^2
&\leq
\re \iprod{B^\ast x}{u}_U - \norm{B^\ast x}_U^2\\
&\leq \re \iprod{B^\ast (1+is-A_B)\inv Bu}{u}_U 
\leq 1.
}
This inequality together with the property that (see Remark~\ref{extension})
\eq{
(is-A_B)\inv Bu
= \bigl(I+(is-A_B)\inv\bigr)(1+is-A_B)\inv Bu , \qquad s\in\R,
}
finally implies the second estimate in~(a).

In order to prove (b), we first note that 
under the additional assumptions it follows either from boundedness of $B$ or from Lemma~\ref{lem:RBadjoints}(b) that
the adjoint $A_B^\ast$ is given by~\eqref{eq:ABast}
and that 
$ ((is-A_B)\inv B)^\ast=B^\ast (-is-A_B^\ast)\inv  $, $s\in\R$.
Proceeding as in the case of the first estimate in part (a), we may use the structure of $A_B^\ast$ to show that $\norm{B^\ast (-is-A_B^\ast)\inv}^2\leq \norm{(-is-A_B^\ast)\inv}$ for $s\in\R$.
Hence for all $s\in\R$ we have
\eq{
\norm{(is-A_B)\inv B} 
= \norm{B^\ast (-is-A_B^\ast)\inv}
\leq
 \norm{(is-A_B)\inv}^{1/2}
\leq N(s)^{1/2}.
}

To show (c), let $\kappa>0$ and $s\in\R$ be fixed. 
Moreover, let $x\in \Dom(A_{ B,\kappa})$ and $y=(is-A_{ B,\kappa})x\in X$. 
Estimate~\eqref{eq:ABBiprod} in Lemma~\ref{lem:ABBsemigroup} (applied to the operators $A$ and $\kappa B$) implies that
$  \norm{B^\ast x}^2 \leq \kappa^{-2} \norm{x}\norm{y} $.
We have
\eq{
      y
      =(is-A_{-1}+\kappa^2 BB^\ast)x
      =(is-A_{-1}+ BB^\ast)x + (\kappa^2-1)BB^\ast x,
    }
and since $x\in V$ and $(is-A_{-1}+ BB^\ast)x\in X+\ran(B)$, 
     Remark~\ref{extension} gives
      $$x= \RB[is]y + (1-\kappa^2)\RB[is]BB^\ast x.$$
Using Young's inequality we obtain
    \eq{
      \norm{x}^2
      &\leq 2 N(s)^2 \norm{y}^2 + 2(1-\kappa^2)^2 \norm{\RB[is]B}^2 \norm{B^\ast x}^2\\
      & \leq 2 N(s)^2 \norm{y}^2 + 2\frac{(1-\kappa^2)^2}{\kappa^2} \norm{\RB[is]B}^2 \norm{x}\norm{y} \\
      & \leq 2 N(s)^2 \norm{y}^2 + \frac{1}{2}\norm{x}^2 +\frac{2(1-\kappa^2)^4}{\kappa^4}\norm{\RB[is]B}^4 \norm{y}^2 .
    }
 Since  $A_{ B,\kappa}$ generates a contraction semigroup by Lemma~\ref{lem:ABBsemigroup}, the claims follow from parts (a) and (b) together with Lemma~\ref{sp_contr}.
\end{proof}
  
The estimate $\norm{B^\ast \RB[is]B}\leq 1$, $s\in\R$, in part (a) was proved in~\cite[Lem.~2.2.6, P6]{Oos00} in the case where  $B\in \Lin(U,X)$, and 
a similar result for general $B$ in the case of 
second-order systems was presented in~\citel{WeiTuc03}{Thm.~1.3}.

\section{Frequency domain criteria for resolvent bounds and non-uniform stability }
\label{sec:freqdomresults}

\subsection{Criteria for first-order problems}

In this section we consider the semigroup $\SGB$
generated by the operator $A_B$ defined in~\eqref{defn_of_ab}, and present 
sufficient conditions for non-uniform stability of this semigroup
 in terms of observability properties of the pair $(B^\ast,A)$.
Theorem~\ref{thm:sg_decay} allows us to focus on estimating the resolvent of $A_B$ on the imaginary axis, and shows that whenever $\norm{(is-A_B)^{-1}}\leq N(\abs{s})$, $s\in\R$, for some continuous non-decreasing $N: \R_+\to(0,\infty)$ with positive increase, the classical solutions $x(\cdot)=T_B(\cdot)x_0,$  $x_0\in \Dom(A_B),$ of~\eqref{eq:ACPmain}
satisfy
\eqn{
  \label{eq:NUStabFreq}
  \norm{T_B(t)x_0}\leq \frac{C}{N\inv(t)}\norm{A_B x_0}, \qquad t\geq t_0,
}
for some constants $C,t_0>0$. 

Our first main result is based on the following Hautus-type condition with variable parameters. 
The same condition with bounded functions $M$ and $m$ was used in~\cite{Mil12} to study observability properties of the pair $(B^\ast,A)$.

\begin{definition}
  \label{def:NUHautus}
  The pair $(B^\ast,A)$ is said to satisfy the \emph{non-uniform Hautus test} if
  there exist $M$, $m: \R\to [r_0,\infty), r_0>0,$ such that
\eqn{
\label{eq:NUHautus}
  \norm{x}_X^2\leq M(s) \norm{(is-A)x}_X^2 + m (s) \norm{B^\ast x}_U^2 , \qquad  x\in\Dom(A),\ s\in\R.
}
\end{definition}

The following theorem presents a norm bound for the resolvent of $A_B$ on $i\mathbb R$
when the pair $(B^\ast,A)$ satisfies the non-uniform Hautus test.
General properties of the function $\mu$ in condition~\eqref{eq:BRBboundHautusThm} were discussed in Section~\ref{sec:wellposedness} and in Lemma~\ref{lem:BRBboundProps}.

\begin{theorem}
  \label{thm:HautustoResgrowth}
  Let $A$ and $B$ satisfy Assumption~\textup{\ref{ass:ABBass}}.
Assume further that 
$M, m,\mu: \R\to [r_0,\infty), r_0>0,$ are 
such that 
the pair $(B^\ast,A)$
satisfies the non-uniform Hautus test for the functions $M$ and $m$,
and 
\eqn{
\label{eq:BRBboundHautusThm}
\norm{B^\ast (1+is-A_{-1})\inv B}\leq \mu(s), \qquad s\in\R.
}
Then the operator
		 $A_B$ defined in~\eqref{defn_of_ab} satisfies
 $i\R\subseteq \rho(A_B)$ and
  \eq{
    &\norm{(is-A_B)\inv}\lesssim M(s)\mu(s)+m(s)\mu(s)^2, \qquad s\in\R.
  }

Conversely, if 
$N:\R\to (0,\infty)$ is such that
 $\norm{(is-A_B)\inv}\leq N(s)$ for all $s\in\R$,
then~\eqref{eq:NUHautus} holds for $M(\cdot)=2 N(\cdot)^2$ and a function $m$ such that $m(s)\lesssim 1+N(s)^2$ for $s\in\R$.
If, in addition, either $B\in \Lin(U,X)$, or  $A^\ast =-A$ 
and $\re \iprod{A_{-1}x}{x}_{V^\ast,V}=0$ for all $ x\in V_A$,
 then
one may choose $m=2 N$.
\end{theorem}

  \begin{proof}
 Since $A_B$ generates a contraction semigroup on $X$ by Lemma~\ref{lem:ABBsemigroup},  	
	Lemma~\ref{sp_contr} shows that 
the inclusion $i\R\subseteq \rho(A_B)$ and the resolvent estimate 
 will follow from
		a suitable lower bound for $is-A_B,$  $s\in\R$.
To this end, let $s\in\R$ and $x\in \Dom(A_B)$ be fixed and let $y=(is-A_B)x$.
If we let $R=(1+is -A_{-1})^{-1}$ and 
define $x_1=x+R BB^\ast x$, then 
$(is-A_{-1})x_1 
=y-R BB^\ast x\in X$ and hence  $x_1\in \Dom(A)$. Applying~\eqref{eq:NUHautus} and using the identity $B^\ast x_1=(I+B^\ast RB)B^\ast x$ shows that
\eq{
	\norm{x_1}^2
	& \leq M(s) \norm{(is-A)x_1}^2 + m(s) \norm{B^\ast x_1}^2\\
	& \leq M(s) (\norm{y} + \norm{RB}\norm{B^\ast x})^2 + m(s) (1+\norm{B^\ast RB})^2\norm{B^\ast x}^2\\
	& \lesssim M(s) \norm{y}^2 + \bigl(M(s) \norm{ RB}^2+m(s)(1+\norm{B^\ast RB}^2) \bigr)\norm{B^\ast x}^2.
      }
      Since $\norm{B^\ast x}^2\leq \re \iprod{y}{x}\leq  \norm{y}\norm{x}$ by Lemma~\ref{lem:ABBsemigroup}, 
      we may further estimate the norm of $x=x_1-RBB^\ast x$ by
      \eq{
	\norm{x}^2 
	&\lesssim \norm{x_1}^2 + \norm{RB}^2 \norm{B^\ast x}^2 \\
&  \lesssim M(s) \norm{y}^2 + \bigl(M(s) \norm{ RB}^2+m(s)(1+\norm{B^\ast RB}^2) \bigr)\norm{ x}\norm{y} \\
	& \leq M(s) \norm{y}^2 + \eps \norm{x}^2  + \frac{1}{4\eps}\bigl(M(s) \norm{ RB}^2+m(s)(1+\norm{B^\ast RB}^2) \bigr)^2\norm{y}^2,
      }
where $\eps>0$.
We have $\norm{B^\ast RB}\leq \mu(s)$ by assumption, and 
Lemma~\ref{lem:ABBsemigroup} further implies that $\norm{RB}^2\leq \norm{B^\ast RB}\leq \mu(s)$.
      Letting $\eps$ be sufficiently small
we obtain
\eq{
\norm{x}^2 
& \lesssim \left(M(s) +  M(s)^2 \norm{ RB}^4+m(s)^2(1+\norm{B^\ast RB}^2)^2\right) \norm{y}^2\\
& \lesssim \left(M(s)^2 \mu(s)^2+m(s)^2\mu(s)^4\right) \norm{y}^2\\
& \lesssim \left(M(s) \mu(s)+ m(s) \mu(s)^2\right)^2 \norm{(is-A_B)x}^2.
}
Since $x\in \Dom(A_B)$ was arbitrary,  Lemma~\ref{sp_contr} implies that $is\in \rho(A_B)$ and $\norm{(is-A_B)\inv }\lesssim M(s) \mu(s)+ m(s)\mu(s)^2$.

      To prove the other claims,
      assume that $\norm{\RB}\leq N(s)$ and let $s\in\R$ and $x\in \Dom(A)$ be arbitrary.
Using the properties in Remark~\ref{extension}, 
  the claims follow from the estimate
\eq{
  \norm{x}^2
  &=\norm{\RB[is](is-A)x+\RB[is]BB^\ast x}^2\\
  &\leq2\norm{\RB[is]}^2\norm{(is-A)x}^2+2\norm{\RB[is]B}^2 \norm{B^\ast x}^2
   }
and Lemma~\ref{lem:ABBresgrowthlemma}.
\end{proof}

\begin{remark}
\label{rem:HautusSharperBound}
In the case where $\mu$ is a bounded function the resolvent estimate in Theorem~\ref{thm:HautustoResgrowth} takes the form 
$\norm{(is-A_B)\inv}\lesssim M(s)+m(s)$, $s\in\R$.  
As shown in Lemma~\ref{lem:BRBboundProps}, if
 $A$ and $B$ satisfy Assumption~\ref{ass:ABBass}, then condition~\eqref{eq:BRBboundHautusThm} is always satisfied for 
 $\mu(s)=c(1+s^2)$, $s\in\R$, with some $c>0$.
However, in the absence of a more precise bound for $\norm{B^\ast (1+is-A_{-1})\inv B}$
 the proof of Theorem~\ref{thm:HautustoResgrowth} can be modified to derive an alternative resolvent growth bound.  
 Indeed, if the operator $R$
 in the proof is redefined as $R=(I-A_{-1})\inv$ and if $x_1$ is defined as before, then we have $(is-A_{-1})x_1 = y + (is-1)RBB^\ast x$, and
 estimates analogous to those in the original proof
 show that $i\R\subseteq\rho(A_B)$ and 
\eq{
\norm{(is-A_B)\inv}& \lesssim 
M(s) (1+s^2)+ m(s), \qquad s\in\R.
} 
This estimate is in general sharper than what is obtained from Theorem~\ref{thm:HautustoResgrowth} with a quadratic upper bound for $\mu$. 
Finally, 
for general $\mu$
 the estimates in the proof of Theorem~\ref{thm:HautustoResgrowth} also establish the more precise bound 
\eq{
\norm{(is-A_B)\inv}& \lesssim 
M(s)^{1/2}
+M(s) \norm{ (1+is-A_{-1})\inv B}^2+ m(s)\mu(s)^2
}
for $s\in\R$.
This improves on the original estimate if
 $\norm{ (1+is-A_{-1})\inv B}\to 0$ as $\abs{s}\to\infty$. The latter holds, for instance, if $B\in \Lin(U,X)$ is compact.
\end{remark}

Recall that the pair $(B^\ast,A)$ is said to be \emph{exactly observable} if 
 
\eq{
\int_0^\tau \| B^\ast T (t) x \|^2\, dt \ge c_\tau \| x\|^2, \qquad  x\in \Dom(A),
}
for some $\tau>0$ and $c_\tau>0$~\citel{TucWei09book}{Def.~6.1.1}.
If~\eqref{eq:BRBboundHautusThm} is satisfied for a bounded function $\mu$, then Lemma~\ref{lem:Bstaradmissible} and~\citel{Mil12}{Thm.~2.4} imply that the non-uniform Hautus test is satisfied for some bounded functions $M$ and $m$ if and only if the pair $(B^\ast,A)$ is exactly observable. 
In this situation Theorem~\ref{thm:HautustoResgrowth} and the Gearhart--Pr\"uss theorem imply that $\SGB$ is exponentially stable, similarly as in~\cite{Sle74,CurWei06}.

Our next resolvent estimate for a skew-adjoint operator $A$ is based on lower bounds for $B^\ast$
restricted to so-called \emph{wavepackets} of $A$.
Similar conditions have previously been used to study exact observability of the pair $(B^\ast,A)$, for example in~\cite{Chen,RTTT05,Mil12}.

\begin{definition}
  \label{def:WP}
	Let $A$ be a self-adjoint operator on $X$.
  For $s\in\R$ and $\gd(s)>0$ we define $\WP{s}{\gd}$ to be the spectral subspace of $A$ associated with the interval $(s-\gd(s),s+\gd(s))\subseteq \R$.
  The elements $x\in \WP{s}{\gd}$ are called \emph{$(s,\gd(s))$-wavepackets of $A$}.
		If $A$ is skew-adjoint, then we define  $\WP{s}{\gd}$ to be  $\mathrm{WP}_{s,\delta(s)}(-iA).$
\end{definition}

The following proposition presents a sufficient condition for non-uniform stability of $\SGB$ given in terms of the action of $B^*$ on  wavepackets of $A$.
In the case where $\mu$ is a bounded function
and the pair $(B^\ast,A)$ is exactly observable,
it is possible by Lemma~\ref{lem:Bstaradmissible} and~\citel{Mil12}{Cor.~2.17}   to choose $\gd(s)\equiv\gd_0>0$ and $\gg(s)\equiv \gg_0>0$, and our result then implies exponential stability of $\SGB$.

\begin{theorem}
  \label{thm:WPtoRG}
  Let $A$ and $B$ satisfy Assumption~\textup{\ref{ass:ABBass}} and suppose that $A$ is skew-adjoint.
Suppose further that $\mu: \R\to [r_0,\infty)$, $r_0>0$, is such that 
\eq{
\norm{B^\ast (1+is-A_{-1})\inv B}\leq \mu(s), \qquad s\in\R.
}
If there exist
 bounded functions $\gg,\gd: \R\to (0,\infty)$
 such that
  \eqn{
    \label{eq:WPcond}
    \norm{B^\ast x}_U\geq \gg(s) \norm{x}_X,
\qquad  x\in \WP{s}{\gd}, \ s\in\R,
  }
  then $i\R\subseteq \rho(A_B)$ and
  $$\norm{(is-A_B)\inv} \lesssim \frac{\mu(s)^2}{\gg(s)^{2} \gd(s)^{2}},\qquad s\in\R.$$
\end{theorem}

\begin{proof}
    By Lemma~\ref{lem:ABBsemigroup},  $A_B$ generates a contraction semigroup on $X$.
	Thus by Lemma~\ref{sp_contr}
    the claims will follow from
    suitable lower bounds for the operators $is-A_B$, $s\in\R$.
Let $s\in\R$ and $x\in \Dom(A_B)$ be fixed and let $y=(is-A_B)x$.
Further let $P_0\in \Lin(X)$ be the orthogonal projection onto 
 $\WP{s}{\gd}$, and let $P_\infty = I-P_0$. Define 
\eq{
x_0=P_0x, \quad x_\infty=P_\infty x, \quad y_0=P_0y, \quad \text{and}\quad y_\infty = P_\infty y.
}
Since $x_0\in \WP{s}{\gd}$ and $B^\ast x_0=B^\ast x-B^\ast x_\infty$,  \eqref{eq:WPcond} implies that
    \eqn{
      \label{eq:WPRGxEst}
      \norm{x}^2
      &= \norm{x_0}^2 + \norm{x_\infty}^2
      \lesssim \gg(s)^{-2} \bigl(\norm{B^\ast x}^2 + \norm{B^\ast x_\infty}^2\bigr) + \norm{x_\infty}^2 .
    }
We now estimate  $\norm{x_\infty}$ and $\norm{B^\ast x_\infty}$ in turn. 
We begin by introducing the operator $R=(1+is-A_{-1})^{-1}$, noting that $\norm{R}\leq 1$ since $A$ generates a contraction semigroup.
Applying $P_\infty R$ to both sides of the identity $y=(is-A_B)x$ we obtain
\eqn{
\label{eq:WPisAR1}
        (is-A)R x_\infty = Ry_\infty - P_\infty RBB^\ast x,
}
 and hence
\eqn{
\label{eq:WPxinftyform}
    x_\infty = Rx_\infty + Ry_\infty - P_\infty RBB^\ast x.      
}
    Now since $R$ and $P_\infty$ commute, we have $Rx_\infty \in \ran(P_\infty)$, and
    the spectral theorem for self-adjoint operators implies that $\norm{Rx_\infty}\leq \gd(s)\inv \norm{(is-A)Rx_\infty}$. Thus
\eq{
 \norm{x_\infty}
 \lesssim \gd(s)\inv\norm{(is-A)Rx_\infty} +\norm{y}+\norm{ RB}\norm{B^\ast x}.}
By~\eqref{eq:WPisAR1} we have
$$\norm{(is-A)Rx_\infty}\le \norm{Ry_\infty} + \norm{P_\infty RBB^\ast x}\le  \|y\|+ \norm{ RB}\norm{B^\ast x},$$
and therefore 
\begin{equation}\label{eq:xinf_bound}
 \norm{x_\infty}
 \lesssim \gd(s)\inv\big(\norm{y}+\norm{ RB}\norm{B^\ast x}\big).
 \end{equation}

In order to estimate $\norm{B^*x_\infty}$ we begin by observing that, by~\eqref{eq:WPxinftyform},
\begin{equation}\label{eq:B*_bound}
\norm{B^\ast x_\infty}
      \leq \norm{B^\ast R}\norm{x_\infty}+\norm{B^\ast R}\norm{y}+\norm{B^\ast (I-P_0) RB}\norm{B^\ast x}.
      \end{equation}
Since $A$ is skew-adjoint, 
we have
$ B^\ast (1+is-A)\inv=((1-is+A_{-1})\inv B)^\ast $
by Lemma~\ref{lem:RBadjoints}.
Hence the resolvent identity gives
\eq{
\norm{B^\ast R}
= \norm{(1-is+A_{-1})\inv B}
=\norm{R B - 2 (1-is+A)\inv RB} 
\leq 3\norm{R B},
}
and since $\norm{(1+is-A)P_0}\lesssim 1+\gd(s)\lesssim 1$ we see using~\eqref{eq:RBboundwrtBRB} in
 Lemma~\ref{lem:ABBsemigroup}  that
\eq{
\norm{B^\ast (I-P_0) R B}  
&\leq  \norm{B^\ast RB} + \norm{B^\ast R(1+is-A)P_0 R B}  \\
&\lesssim \norm{B^\ast RB} + \norm{ R B}^2  
\lesssim \norm{B^\ast RB}.
}     
Using these estimates and~\eqref{eq:xinf_bound}, we obtain from~\eqref{eq:B*_bound} that
   \eq{     \norm{B^\ast x_\infty}
      &\lesssim \norm{RB}\norm{x_\infty}+\norm{ RB}\norm{y}+\norm{B^\ast RB}\norm{B^\ast x}\\
      &\lesssim \gd(s)\inv\norm{ RB}\norm{y}+ \big(\gd(s)\inv\norm{ RB}^2 + \norm{B^\ast  RB}\big)\norm{B^\ast x} .
    }
Inserting  our bounds for $\|x_\infty\|$ and $\|B^*x_\infty\|$
  into~\eqref{eq:WPRGxEst}, and using the  estimate {$\norm{B^\ast x}^2\leq \norm{x}\norm{y}$ implied  by~\eqref{eq:ABBiprod} in
 Lemma~\ref{lem:ABBsemigroup}, we deduce after a straightforward calculation that }
\eq{
\norm{x}^2 
      &\lesssim \gg(s)^{-2} \bigl(\norm{B^\ast x}^2 + \norm{B^\ast x_\infty}^2\bigr) + \norm{x_\infty}^2 \\
            &          \lesssim
\gd(s)^{-2}\bigl(1+\gg(s)^{-2}\norm{ RB}^2\bigr)\norm{y}^2 \\
& \quad 
+\Bigl(\gg(s)^{-2}\big(1+ \gd(s)^{-2}\norm{ RB}^4 + \norm{B^\ast  RB}^2\big)
+ \gd(s)^{-2}\norm{ RB}^2
\Bigr)\norm{x}\norm{y}.
}
Since $\norm{RB}^2\leq \norm{B^\ast RB}\leq \mu(s) $ by Lemma~\ref{lem:ABBsemigroup} and our assumption 
we obtain, after dropping dominated terms, the estimate 
$$\|x\|^2\lesssim \gamma(s)^{-2}\delta(s)^{-2}\mu(s)\|y\|^2+\gamma(s)^{-2}\delta(s)^{-2}\mu(s)^2\|x\|\|y\|.$$
An application of Young's inequality now yields
$$\|x\|^2\lesssim \gg(s)^{-4}\gd(s)^{-4}\mu(s)^4  \norm{y}^2,$$
and the claim follows from Lemma~\ref{sp_contr}.
\end{proof}

\begin{remark}
In the situation where $\mu$ is a bounded function,  Theorem~\ref{thm:WPtoRG} can alternatively be proved by combining Theorem~\ref{thm:HautustoResgrowth}, Lemma~\ref{lem:Bstaradmissible} and results in~\cite{Mil12}.
Indeed, 
in this case Lemma~\ref{lem:Bstaradmissible} implies that $B^\ast$ is admissible and
by~\citel{Mil12}{Prop.~2.16} the pair $(B^\ast,A)$ satisfies the non-uniform Hautus test~\eqref{eq:NUHautus} for some functions $M$ and $m$ such that
    $M(s)\lesssim \gg(s)^{-2}\gd(s)^{-2}$ and $m(s) \lesssim \gg(s)^{-2}$ for $s\in\R$.
The claim of Theorem~\ref{thm:WPtoRG} then follows from Theorem~\ref{thm:HautustoResgrowth}.
Similarly as in Remark~\ref{rem:HautusSharperBound}, 
the end of 
the proof of Theorem~\ref{thm:WPtoRG} can be modified to  establish the potentially sharper resolvent estimate
\eq{
\norm{(is-A_B)\inv }
& \lesssim 
\nu(s) +   \nu(s)^2\norm{(1+is-A_{-1})\inv B}^2+ \frac{\mu(s)^2}{\gg(s)^2}, \quad s\in\R,
}
where $\nu(s)=\gd(s)^{-1}(1+\gg(s)\inv \norm{(1+is-A_{-1})\inv B}) $.
\end{remark}

\begin{remark}\label{rem:asymp}
It is easy to see from the proofs of Theorems~\ref{thm:HautustoResgrowth} and~\ref{thm:WPtoRG} that if the assumptions
are satisfied only for $\abs{s}\geq s_0$ for some $s_0>0$, then $i\R\setminus (-is_0,is_0)\subseteq \rho(A_B)$ and the resolvent estimate will hold for $\abs{s}\geq s_0$.
The same comment applies to the results in the remainder of this paper.
Since the non-uniform decay rate
is determined only by the resolvent norms for large values of $\abs{s}$, this property
is useful in situations where $i\R\subseteq \rho(A_B)$ is already known or can be shown using other methods.
\end{remark}

\subsection{Criteria for second-order problems}
\label{sec:freqdomSecondOrder}

In this section we focus on studying the resolvent growth for the operator $A_{B}$
defined in~\eqref{defn_of_ab} in the case where the operators
\eq{
  A = \pmat{0 & I \\ -L & 0} \qquad \mbox{and} \qquad B=\pmat{0\\D}
}
on $X$ and $U$, respectively, satisfy the assumptions in Section~\ref{second_order_setup}.
In particular, $L:  H_1\subseteq H\to H$ is a positive self-adjoint and boundedly invertible operator and  $D\in \Lin(U,H_{-1/2})$. 
We shall reformulate the conditions of Theorems~\ref{thm:HautustoResgrowth} and~\ref{thm:WPtoRG}  
in terms of the operators $L$ and $D$. In addition, we shall present further sufficient conditions for non-uniform stability in terms of generalised observability properties of the ``Schr\"odinger group'' generated by $iL$.

In the proofs of our results 
we shall employ a change of variables which transforms $A$ into a block-diagonal operator $A_{\rm diag}$; see for instance the proof of~\citel{Mil12}{Thm.~3.8}.
Recalling that 
$V=H_{1/2}\times H_{1/2}$, we define a unitary operator $Q\in \Lin(V, X)$ by
\eqn{
  \label{eq:WEtransform}
  Q = \frac{1}{\sqrt{2}}\pmat{I&I\\iL^{1/2}&-iL^{1/2}}, \quad \text{with}\quad Q\inv = \frac{1}{\sqrt{2}}\pmat{I&-iL^{-1/2}\\I & iL^{-1/2}}.
}
We then have 
$A=QA_{\rm diag} Q^{-1}$, where
\eq{
 A_{\rm diag} = \pmat{i\Azhalf&0\\0&-i\Azhalf}:   \Dom(A_{\rm diag})\subseteq V\to V 
}
 with domain $\Dom(A_{\rm diag})=H_1\times H_1$.
The following lemma describes the  wavepackets of $A$ in terms of the wavepackets of $\Azhalf$.

\begin{lemma}
  \label{lem:WE_WPs}
Let $L$ and $A$ be as in Section~\textup{\ref{second_order_setup}}
and let $\gd: \R\to (0,\infty)$ be such that  $\sup_{s\in\R}\gd(s)\leq \norm{\Azmhalf }$.
  Then for every $s\in\R$ we have
   \eqn{
    \label{eq:WE_WPform}
  \textup{WP}_{s, \delta(s)}(A)= 
	\Setm{\pmat{w\\ i\sign(s)L^{1/2}w}}{ w \in \textup{WP}_{|s|, \delta(s)}(L^{1/2}) }.
  }
  \end{lemma}

\begin{proof}
Let $s>0$ be fixed.
We have  $\textup{WP}_{s, \delta(s)}(A)=\ran(\chi_{I_{s,\delta (s)}}(-iA))$, where  $I_{s,\delta (s)}=(s-\gd(s),s+\gd(s))$.
  Using the decomposition $A=QA_{\rm diag}Q\inv$ 
	and the upper bound for $\gd$ we see that
   \eq{
    \chi_{I_{s,\delta_s}}(-iA)
        = Q \pmat{\chi_{I_{s,\delta (s)}}(L^{1/2})&0\\0&0}Q^{-1}
    = \frac{1}{\sqrt{2}} \pmat{\chi_{I_{s,\delta (s)}}(L^{1/2})&0\\iL^{1/2}\chi_{I_{s,\delta (s)}}(L^{1/2})&0}Q\inv.
  }
The functional calculus for 
the positive and boundedly invertible operator
$L$ implies that
\[
\chi_{I_{s,\delta (s)}}(\Azhalf)H_{1/2}=\ran\big(\chi_{I_{s,\delta (s)}}\left(\Azhalf\right)\big),
\]
and hence~\eqref{eq:WE_WPform} follows from surjectivity of $Q\inv $.
The proof in the case $s<0$ is analogous.
\end{proof}

The next result is a counterpart of Theorem~\ref{thm:WPtoRG} for damped second-order systems.
We refer to~\citel{Rus75}{Sec.~3} 
for a related result on polynomial stability of second-order systems in the case where $L$ has discrete spectrum and $D\in \Lin(U,H)$.

\begin{theorem}
  \label{thm:WPtoRG_WE}
Let $L$, $D$, $A$ and $B$ be 
 as in Section~\textup{\ref{second_order_setup}} and assume that 
 $\mu_0: \R_+\to [r_0,\infty)$, $r_0>0$, is such that 
\eq{
s\,
\norm{D^\ast ( (1+is)^2+L_{-1})\inv D}\leq \mu_0(s), \qquad s\in\R_+.
}
 	  If there exist bounded functions $\gg_0,\gd_0: \R_+\to(0,\infty)$
such that
\eq{
    \norm{D^\ast w}_U\geq \gg_0(s) \norm{w}_{H},
\qquad  w \in \textup{WP}_{s,\delta_0(s)}(L^{1/2}), s\geq 0,
}
  then $i\R\subseteq \rho(A_{B})$ and
\eq{
\norm{(is-A_{B})\inv} \lesssim \frac{\mu_0(\abs{s})^2}{\gg_0(\abs{s})^2 \gd_0(\abs{s})^2}, \qquad s \in \R.
}
\end{theorem}

\begin{proof}
  If we let $s_0=\min\set{\norm{\Azmhalf },1}$ then $\gs(\Azhalf)\subseteq [s_0,\infty)$.
 Define $\gd:\R\to(0,\infty)$ by
	\begin{equation}\label{def_delta}
		\gd(s)=\frac{s_0\gd_0(\abs{s})}{2\sup_{s\geq 0}\gd_0(s)}, \qquad s \in \R.
	\end{equation}
	Fix $s\in\R$ and let $x \in \textup{WP}_{s,\delta(s)}(A)$ be arbitrary.  Lemma~\ref{lem:WE_WPs} implies that  
	$x=(w,  i\sign(s)L^{1/2}w)$ for some $w \in \textup{WP}_{|s|, \delta(s)}(L^{1/2})$.
Noting that $\Azhalf w \in \textup{WP}_{|s|,\delta(s)}( L^{1/2})$, our assumptions imply that
	\eq{
    \norm{B^\ast x}_U
    = \norm{D^\ast L^{1/2}w}_U
    \geq \gg_0(\abs{s}) \norm{L^{1/2}w}_{H}
    = \frac{\gg_0(\abs{s})}{\sqrt{2}} \norm{x}_{X}.
  }
    Thus the conditions of Theorem~\ref{thm:WPtoRG} hold for $\gd:\R_+\to (0,\infty)$ defined in~\eqref{def_delta} and for $\gg:\R_+\to (0,\infty)$ defined by
$\gg(s)=\gg_0(\abs{s})/\sqrt{2}$ for $ s \in \R$. 
Since \eqref{eq:BRBboundSecondOrder} holds by assumption, the arguments in Section~\ref{second_order_setup} show that $\norm{B^\ast (1+is-A_{-1})\inv B}\lesssim \mu_0(\abs{s})$, $s\in\R$. Thus the claims follow from Theorem~\ref{thm:WPtoRG}.
\end{proof}

The recent literature contains several studies of non-uniform stability
for second-order systems based on observability properties of the
Schr\"odinger group associated with $(D^\ast,iL)$ when $D\in \Lin(U,H)$ is a bounded operator.
In particular, the Hautus-type condition~\eqref{eq:SchObs} in the following proposition was used as a starting point for deriving resolvent estimates for $A_B$ in~\citel{AnaLea14}{Thm.~2.3} in the case of constant parameters $M_0$ and $m_0$, and with variable parameters in~\citel{JolLau20}{App.~B}; see also~\cite{LauLea21}. In both cases the results were used to prove non-uniform stability of wave equations with viscous damping.
The following result generalises the results on resolvent growth in~\citel{JolLau20}{App.~B} to operators $L$ with possibly non-compact resolvent and operators~$D\in \Lin(U,H_{-1/2})$.

\begin{proposition}
  \label{prp:SchObstoRG}
Let $L$, $D$, $A$ and $B$ be 
 as in Section~\textup{\ref{second_order_setup}}.
Moreover, let $M_0: \R_+\to(0,\infty)$ and $m_0 : \R_+\to [r_0,\infty), r_0>0,$ 
be such that
\eqn{
\label{eq:SchObs}
\norm{w}_H^2 \leq M_0(s) \norm{(s^2-L)w}_H^2 + m_0(s) \norm{D^\ast w}_U^2, \quad  w\in H_1,\  s\geq 0,
}
and define $\eta:=\inf_{s\geq 0}M_0(s)(1+s)^2>0.$  Then the conditions of Theorem~\textup{\ref{thm:WPtoRG_WE}} are satisfied for the functions $\gg_0,\gd_0: \R_+\to(0,\infty)$
defined by
\eqn{
\label{eq:SchObsWPparam}
\gd_0(s) = \frac{\min\set{\sqrt{\eta},1/2}}{\sqrt{2 M_0(s)}(1+s)}
\qquad \mbox{and} \qquad
\gg_0(s) =\frac{1}{\sqrt{2m_0(s)}}
} 
for $s\ge0$. If, in addition,
 $\mu_0: \R_+\to [r_0,\infty)$, $r_0>0$, is such that 
\eqn{\label{eq:mu0}
s\,
\norm{D^\ast ( (1+is)^2+L_{-1})\inv D}\leq \mu_0(s), \qquad s\in\R_+,
}
then
$i\R\subseteq \rho(A_{B})$ and 
\eq{
\norm{(is-A_{B})^{-1}}\lesssim (1+s^2) M_0(\abs{s})m_0(\abs{s})\mu_0(\abs{s})^2, \qquad s\in\R.
}
\end{proposition}

\begin{proof}
Let $s \ge 0$.
  The function
  $\gd_0$ in~\eqref{eq:SchObsWPparam} is bounded and for every  $r\in (s-\gd_0(s),s+\gd_0(s))$ we have
  \eq{
    \abs{s^2-r^2}
    =\abs{s-r}\abs{s+r}
    \leq \frac{\min\set{\sqrt{\eta},1/2}(2s+\gd_0(s))}{\sqrt{M_0(s)}(1+s)} \leq \frac{1}{\sqrt{2M_0(s)}}.
  }
If $w \in \textup{WP}_{s,\delta_0(s)}( L^{1/2})$,
this estimate
 and the 
functional calculus for $L$
 imply that $\norm{(s^2-L)w}^2\leq (2M_0(s))\inv\norm{w}^2$. Hence~\eqref{eq:SchObs} yields
  \eq{
    \norm{D^\ast w}^2 \geq \frac{1}{2m_0(s)}\norm{w}^2.
  }
  Since $s\geq 0$ and the wavepacket $w$ were arbitrary, the conditions of Theorem~\ref{thm:WPtoRG_WE} are satisfied for the functions $\gd_0$ and $\gg_0$ defined by~\eqref{eq:SchObsWPparam}, and the remaining claims follow from Theorem~\ref{thm:WPtoRG_WE}.
\end{proof}

Our result shows in particular that if \eqref{eq:SchObs} holds for constant functions $M_0$ and $m_0$ and if \eqref{eq:mu0} holds for a bounded function $\mu_0$, then   $\norm{(is-A_B)\inv}\lesssim 1+s^2$ for $s\in\R$.
The same result was previously proved for $D\in\Lin(U,H)$ in~\citel{AnaLea14}{Thm.~2.3}, and we shall discuss this result further in the context of damped waves in Section~\ref{sec:2Dwaves} below. A result closely related to Proposition~\ref{prp:SchObstoRG} and, in particular, allowing non-constant functions $M_0$ and $m_0$ was proved in \cite[Prop.~B.3]{JolLau20}, once again  in the simpler setting where $D\in\Lin(U,H)$; see also~\cite{LauLea21}. Proposition~\ref{prp:SchObstoRG} not only generalises and extends these earlier results, it moreover allows us to see that  observability conditions of the type considered in \eqref{eq:SchObs} and in~\citel{JolLau20}{App.~B} serve as sufficient conditions for the wavepacket condition in Theorem~\ref{thm:WPtoRG}.
Finally, in the case where $\mu_0$ is a bounded function, Lemma~\ref{lem:Bstaradmissible} and~\citel{Mil12}{Prop.~2.16} show that the same conditions further imply the non-uniform Hautus test in Definition~\ref{def:NUHautus} 
for the associated first-order equation.

We conclude this section by presenting an equivalent characterisation for the non-uniform Hautus test of pairs $(B^\ast,A)$ stemming from second-order systems.

  \begin{proposition}
    \label{prp:HautustoResgrowth_WE}
Let $L$, $D$, $A$ and $B$ be 
 as in Section~\textup{\ref{second_order_setup}}.
If $M_0, m_0 : \R_+\to[r_0,\infty)$, $r_0>0,$ are such that
    \eqn{
      \label{eq:NUHautus_WE}
      \norm{w}_{H}^2\leq  {M_0}(s) \norm{(s-L^{1/2})w}_{H}^2 +  {m_0}(s)\norm{D^\ast w}_U^2
    }
    for all $w\in H_{1/2}$ and $s\geq 0$,
    then $(B^\ast,A)$ satisfies the non-uniform Hautus test
for some function $M$ such that 
     $M(s)\lesssim {M_0}(\abs{s})+ {m_0}(\abs{s})$ and for $m$ given by
    $m(s)=4  {m_0}(\abs{s})$, $s\in\R$. 
If, in addition,
 $\mu_0: \R_+\to [r_0,\infty)$, $r_0>0$, is such that 
\eq{
s\,
\norm{D^\ast ( (1+is)^2+L_{-1})\inv D}\leq \mu_0(s), \qquad s\in\R_+,
}
then
$i\R\subseteq \rho(A_{B})$ and
    \[
\norm{(is-A_{B})\inv} \lesssim {M_0}(\abs{s})\mu_0(\abs{s})+{m_0}(\abs{s})\mu_0(\abs{s})^2, \qquad s \in \R.
\]

Conversely, if $(B^\ast,A)$ satisfies the non-uniform Hautus test for some $M,m: \R\to [r_0,\infty)$, $r_0>0$,
    then~\eqref{eq:NUHautus_WE} holds for
$M_0$ and $m_0$ defined by
    ${M_0}(s)= M(s)$ and
    ${m_0}(s)= m(s)/2$ for $s\geq 0$.
  \end{proposition}

\begin{proof}
Since $\Azhalf $ is boundedly invertible by definition, similarly as in~\citel{Mil12}{Thm.~3.8} the decomposition $A=QA_{\rm diag}Q\inv$ with $Q$ as in~\eqref{eq:WEtransform} implies that~\eqref{eq:NUHautus} holds if and only if 
    \eq{
      \norm{y_1}_{H}^2+\norm{y_2}_{H}^2 
      &\leq M(s)\left( \norm{(s-L^{1/2})y_1}_{H}^2 + \norm{(s+L^{1/2})y_2}_{H}^2 \right)
      \\
      & \quad+ \frac{m(s)}{2} \norm{D^\ast (y_1-y_2)}_U^2
    }
    for all $y_1,y_2\in H_{1/2}$ and $s\in\R$. 
		Thus if~\eqref{eq:NUHautus} holds, then choosing
		$y_2=0$ and $s\geq 0$ in the above inequality implies the last claim of the proposition.

To prove the first claim, let $s\geq 0$ and $y_1,y_2\in H_{1/2}$ be arbitrary.
Our assumptions imply that $\Azhalf $ is boundedly invertible and $D^\ast L^{-1/2}\in \Lin (H, U)$. Thus the estimates
$\norm{\Azhalf (s+\Azhalf )\inv}\leq 1$, $\norm{(s+\Azhalf )\inv}\leq \norm{\Azmhalf }\inv$ and~\eqref{eq:NUHautus_WE}  imply that
    \eq{
      \MoveEqLeft \norm{y_1}_{H}^2 +\norm{y_2}_{H}^2
      \leq M_0(s)\norm{(s-L^{1/2})y_1}_{H}^2 + m_0(s)\norm{D^\ast y_1}_U^2 + \norm{y_2}_{H}^2\\
      &\leq M_0(s)\norm{(s-L^{1/2})y_1}_{H}^2 + 2m_0(s)\norm{D^\ast (y_1-y_2)}_U^2 \\
      &\quad
      + 2 m_0(s)\norm{D^\ast L^{-1/2}}^2\norm{L^{1/2}y_2}_{H}^2 +\norm{y_2}_{H}^2\\
      &\leq M_0(s)\norm{(s-L^{1/2})y_1}_{H}^2 + 2m_0(s)\norm{D^\ast (y_1-y_2)}_U^2 \\
      &\quad
      + \bigl(2 m_0(s)\norm{D^\ast L^{-1/2}}^2 + \norm{L^{-1/2}}^{-2}\bigr) \norm{(s+L^{1/2})y_2}_{H}^2.
    }
    Thus~\eqref{eq:NUHautus} holds for $s\geq 0$ with  $M$ and $m$ as described in the claim.
    For $s<0$ we get 
an analogous estimate
 by applying~\eqref{eq:NUHautus_WE} to $\norm{y_2}^2$ with $s$ replaced by $\abs{s}$, and combining the estimates shows that~\eqref{eq:NUHautus} holds for $s\in\R$ with functions
$M,m:\R\to [r_0,\infty)$ satisfying
$m(s)=4  {m_0}(\abs{s})$ and
     $M(s)\lesssim {M_0}(\abs{s})+ {m_0}(\abs{s})$ for $s\in\R$.
Finally,  
as shown in Section~\ref{second_order_setup},
the fact that~\eqref{eq:BRBboundSecondOrder} holds by assumption
implies 
$\norm{B^\ast (1+is-A_{-1})\inv B}\lesssim \mu_0(\abs{s})$, $s\in\R$,
and thus the remaining claims follow from Theorem~\ref{thm:HautustoResgrowth}.
\end{proof}

\section{Time-domain conditions for non-uniform stability}
\label{sec:timedomresults}

\subsection{Conditions for first-order problems}

In this section we present sufficient conditions for polynomial stability of the semigroup $\SGB$ generated by $A_B$ in terms of the following generalised observability concept. Related generalisations of exact observability have previously been used in~\cite{AmmTuc01,AmmNic15,AmmBch17} to study non-uniform stability of damped second-order systems.

\begin{definition}
  \label{def:NUObs}
Let  $(T (t))_{t \ge 0}$ be a contraction semigroup on $X,$ with generator $A$,
and let $C\in \Lin(X_1,U)$, where $X$ and $U$ are Hilbert spaces.
  The pair $(C,A)$ is said to be \emph{non-uniformly observable} (\emph{with parameters} $\beta \geq 0$ \emph{and} $\tau>0$)  if there exists $c_\tau>0$ such that
  \eqn{
    \label{eq:NUObs}
 c_\tau \norm{(I-A)^{-\gb}x}^2_X\leq \int_0^\tau \norm{C T (t) x}^2_U \,dt, \qquad  x\in \Dom(A).
  }
\end{definition}

Note that by~\cite[Corollary]{Kat61} the norm $\norm{(I-A)^{-\gb}x}$ in~\eqref{eq:NUObs} can be replaced by $\norm{(\gl_0-A)^{-\gb}x}$ for any fixed $\gl_0\in\rho(A)\cap\overline{\mathbb{C}_+}$ (and a possibly different $c_\tau>0$), and in particular the choice $\gl_0=0$ is possible if $0\in\rho(A)$.
By injectivity of $(I-A)^{-\gb}$, non-uniform observability also implies \emph{approximate observability} of the pair $(C,A)$ in the sense that if $C T (t)x= 0$ for all $ t\in[0, \tau]$, then necessarily $x=0$.
The case $\gb=0$ corresponds to exact observability of the pair $(C,A)$.

Throughout this section we consider the setting of Section~\ref{sec:wellposedness} in the case where $B$ is a bounded operator. In particular, $A: \Dom(A)\subseteq X\to X$ generates a contraction semigroup $\SG$ on a Hilbert space $X$ and $B\in \Lin(U,X)$, where $U$ is another Hilbert space.
In this situation the generator of the semigroup $\SGB$ is  $A_B=A-BB^\ast$ with $\Dom(A_B)=\Dom(A)$.
The following consequence of the Heinz inequality for dissipative operators due to Kato
will be important for the arguments in this section.
The result in particular allows us to compare fractional powers of $I-A$ and $I- A_B$.

\begin{theorem}[{\cite[Corollary]{Kat61}}]
\label{kato}
Let $A_1$ and $A_2$ be  generators of contraction semigroups on $X$, 
and suppose that $\Dom(A_1) \subseteq \Dom(A_2)$ and $\|A_2x\|\lesssim \|A_1x\|$ for all $x \in \Dom(A_1).$
Then for every $\alpha \in [0,1]$ we have
$\Dom((-A_1)^\alpha) \subseteq \Dom((-A_2)^\alpha)$ and $\|(-A_2)^\alpha x\| \lesssim \|(-A_1)^\alpha x\|$ for all $x \in \Dom((-A_1)^\alpha).$
\end{theorem}

We shall also require the following lemma.
A similar result for second-order systems of the form in Section~\ref{second_order_setup} (and a possibly unbounded operator~$B$) was presented in~\citel{AmmTuc01}{Lem.~4.1}.

\begin{lemma} \label{lem:OutputMaps}
Let $A: \Dom(A)\subseteq X\to X$ be a skew-adjoint operator generating a unitary group $\SG$  and let 
$B\in \Lin(U,X)$.
	\begin{itemize}
\item [(a)]  
    For every $\tau>0$ there exists $C_\tau>0$ such that
\eqn{
\label{eq:OutputMapsIneq}
      \int_0^\tau \norm{B^\ast T_B(t)x}^2 \,dt
      \leq
      \int_0^\tau \norm{B^\ast T (t)x}^2\,dt
      \leq
      C_\tau
      \int_0^\tau \norm{B^\ast T_B(t)x}^2\,dt
}
		 for all  $x\in X.$ 
  Moreover, the second inequality in~\eqref{eq:OutputMapsIneq} remains valid when $A$ is merely a generator of a contraction semigroup. 
		\item [(b)] 
		The pair $(B^\ast,A)$ is non-uniformly observable with parameters $ \gb\in[0,1]$ and $\tau>0$ if and only if $(B^\ast, A_B)$ is non-uniformly observable with the same parameters $\beta$ and $\tau $.
		\end{itemize}
		\end{lemma}

\begin{proof}
We begin by the second statement in (a).
Suppose therefore that $\SG$ is a contraction semigroup and
let $\tau >0$ be fixed.  Define  $\Psi$, $\Psi_B \in \Lin (X, \Lp[2](0,\tau;U))$ by
    $\Psi x: = B^\ast T(\cdot)x$ and
$\Psi_B x: = B^\ast T_B(\cdot) x$
  for all $x\in X$.
  If we define $\mathbb{F}_\tau \in \Lin(\Lp[2](0,\tau;U))$ by
  \eq{
    (\mathbb{F}_\tau u)(t) = \int_0^t B^\ast T(t-s)B u(s)\,ds, \qquad u\in \Lp[2](0,\tau;U),
  }
  then the variation of parameters formula for $\SGB$ implies that 
    \eq{
      (I+\mathbb{F}_\tau)\Psi_B =\Psi.
    }
Hence the second inequality in~\eqref{eq:OutputMapsIneq} holds with $C_\tau = (1+\norm{\mathbb{F}_\tau})^2$.
To complete the proof of (a), assume that
$A$ is skew-adjoint in which case $\SG$ is a unitary group. Direct computations may be used to show that $\re \iprod{\mathbb{F}_\tau u}{u}\geq 0$ for all $u\in \Lp[2](0,\tau;U)$, and therefore
    the operator $I+\mathbb{F}_\tau$ is boundedly invertible with $\norm{(I+\mathbb{F}_\tau)\inv}\leq 1$. 
    This implies the first inequality in~\eqref{eq:OutputMapsIneq} and thus completes the proof of (a).

To prove (b), fix $\beta \in [0,1]$ and $\tau >0$. Both $(A-I)^{-1}$ and $(A_B-I)^{-1}$ are bounded operators generating contraction semigroups on $X$. 
Since $\norm{(A-I)\inv x}\lesssim \norm{(A_B-I)\inv x}\lesssim \norm{(A-I)\inv x}$ for all $x\in X$,  Theorem~\ref{kato} implies that
 $\|(I-A)^{-\beta} x \|\lesssim \|(I-A_B)^{-\beta} x \| \lesssim \|(I-A)^{-\beta} x \|$ for all $ x \in X$. 
Now the claim follows directly from (a).
\end{proof}

As our first main result of this section we show that
if $\Dom(A^\ast)=\Dom(A)$ and $B\in \Lin(U,X)$, then 
non-uniform observability of $(B^\ast,A)$ implies polynomial stability of the semigroup $\SGB$ generated by $A_B $.
The theorem is similar in nature to the results presented 
in~\cite{AmmTuc01,AmmBch17} and \citel{AmmNic15}{Ch.~2}. In particular, these references introduce generalised versions of exact observability of $(B^\ast,A)$
for second-order equations of the form in Section~\ref{second_order_setup}, and 
deduce non-uniform stability of the semigroup $\SGB$.
If $\beta=0$ in our result, then the pair $(B^\ast,A)$ is exactly observable and we obtain exponential stability, similarly as in~\cite{Sle74}.

\begin{theorem} \label{thm:NUObstoResgrowth}
Let  $A$ 
be the generator of a contraction semigroup on $X$ such that $\Dom(A^\ast)=\Dom(A)$, and let 
 $B\in \Lin(U,X)$. 
If the pair $(B^\ast,A)$ is non-uniformly observable with parameters  $\beta \in [0,1]$ and $\tau>0$, then 
$i\R\subseteq \rho(A_B)$ and
  \eq{
    \|(is-A_B)^{-1}\| \lesssim 1+\abs{s}^{2\gb}, \qquad s \in \mathbb R.
  }
  In particular, if $0<\gb\leq 1$ then the semigroup $\SGB$ is polynomially stable and there exists a constant $C>0$ such that
\eqn{\label{eq:poly_decay}
    \norm{T_B(t)x}\leq \frac{C}{t^{1/(2\gb)}}\norm{A_B x}, \qquad  
x\in \Dom(A_B), \ t> 0.
}
If $\beta=0$ then the semigroup $\SGB$ is exponentially stable.
\end{theorem}

\begin{proof}
Let $\beta \in [0,1]$ and $\tau >0$ be such that~\eqref{eq:NUObs} holds for some $c_\tau>0$.
By Lemma~\ref{lem:ABBsemigroup} the semigroup $(T_B(t))_{t \ge 0}$  is contractive and $1\in\rho(A_B)$. 
Moreover, both $(A-I)^{-1}$ and $(A_B-I)^{-1}$ are bounded operators generating contraction semigroups on $X$. 
Since $\norm{(A_B-I)\inv x}\lesssim \norm{(A-I)\inv x}$ for all $x\in X$,
we have $\|(I-A_B)^{-\beta} x \| \lesssim \|(I-A)^{-\beta} x \|$ for all $ x \in X$, by Theorem~\ref{kato}. 
Let $\gl \in\C_+$ and $x\in \Dom(A)$.
The previous estimate together with non-uniform observability of $(B^\ast,A)$, Lemma~\ref{lem:OutputMaps}(a) and the estimate $ \re \iprod{(\gl-A_B)z}{z}\geq\norm{B^\ast z}^2, z\in \Dom(A) $, imply that 
\eq{
\norm{(I-A_B)^{-\gb}x}^2
    &\lesssim
    \norm{(I-A)^{-\gb}x}^2
    \leq \frac{C_\tau}{c_\tau}\int_0^\tau\norm{B^\ast T_B(t)x}^2\,dt\\
    &\leq \frac{C_\tau}{c_\tau}\int_0^\tau\re\iprod{T_B(t)(\gl-A_B) x}{T_B(t)x}\,dt . 
}
Since $\Dom(I-A_B^\ast) = \Dom(A)=\Dom(I-A_B),$ Theorem~\ref{kato} gives $\Dom((I-A_B^\ast)^\gb)=\Dom( (I-A_B)^\gb)$, and in particular $(I-A_B^\ast)^\gb(I-A_B)^{-\gb}\in \Lin(X)$.
Hence if $\gl\in \C_+$ and $x\in \Dom( (-A_B)^{1+2\gb}) $ are arbitrary, 
the above estimate and contractivity of $\SGB$ imply that
\eq{
  \MoveEqLeft[1]\|x\|^2 \lesssim  \frac{C_\tau}{c_\tau} \int_0^\tau\re\iprod{T_B(t)(\gl-A_B) (I-A_B)^\gb x}{T_B(t)(I-A_B)^\gb x}\,dt
	\\
  &= \frac{C_\tau}{c_\tau}\int_0^\tau\re\iprod{(I-A_B^\ast)^\gb (I-A_B)^{-\gb} T_B(t)(\gl-A_B) (I-A_B)^{2\gb} x}{T_B(t) x}\,dt \\
  &\le \frac{\tau C_\tau}{c_\tau}\norm{(I-A_B^\ast)^\gb(I-A_B)^{-\gb}}\norm{(\gl-A_B) (I-A_B)^{2\gb} x}\norm{x} .
}
Since $\C_+\subseteq \rho(A_B)$ we in particular obtain
\eq{
  \sup_{0<\re\gl<1}\; \norm{(\gl -A_B)^{-1} (I-A_B)^{-2\gb}}<\infty.
}
Thus  $\norm{(\gl -A_B)^{-1}}\lesssim 1+ \abs{\gl}^{2\gb}$ for $0<\re\gl<1$ by \citel{LatShv01}{Lem.~3.2}. In particular, 
the inequality $\norm{(\gl -A_B)^{-1}}\geq 1/\dist(\gl,\gs(A_B))$ implies that $i\R\subseteq \rho(A_B)$ and 
$\norm{(is-A_B)\inv}\lesssim 1+\abs{s}^{2\gb}$ for $s\in\R$.
Finally, for \ieq{\gb\in(0,1]}, the estimate \eqref{eq:poly_decay} follows from Theorem~\ref{thm:sg_decay}, and for $\gb=0$ the claim follows from the Gearhart--Pr\"uss theorem.
\end{proof}

As shown in the following proposition, non-uniform observability of $(B^\ast,A)$ can also be characterised in terms of the orbits of the semigroup $\SGB$.

  \begin{proposition}
    \label{lem:NUObsAltChar}
Let $A$ be skew-adjoint and $B\in \Lin(U,X)$. 
    The pair
    $(B^\ast,A)$
    is non-uniformly observable 
    with parameters $\gb\in[0,1]$, $\tau>0$
    if and only if 
    \eqn{
      \label{eq:NUObsAltChar}
      \norm{(I-A)^{-\gb} x}^2\lesssim \norm{x}^2- \norm{T_B(\tau)x}^2, \qquad x\in X.
    }
    In particular, if~\eqref{eq:NUObsAltChar} holds for some $ \gb\in[0, 1]$ and $\tau>0$, then
    $i\R\subseteq \rho(A_B)$ and $\norm{\RB}\lesssim 1+\abs{s}^{2\gb}$ for $s\in \R$.
  \end{proposition}

\begin{proof}
Fix $\gb\in[0, 1]$ and $\tau>0$. 
As in the proof of Lemma~\ref{lem:OutputMaps},
we have
    $\norm{(I-A)^{-\gb}x}\lesssim \norm{(I-A_B)^{-\gb} x}\lesssim \norm{(I-A)^{-\gb}x}$ for all $x\in X$ by Theorem~\ref{kato}.
    For every $x\in \Dom(A) = D(A_B)$ we have
    \eq{
       2\int_0^\tau \norm{B^\ast T_B(t)x}^2\,dt
      &= 2\int_0^\tau \re\iprod{(-A+BB^\ast) T_B(t)x}{T_B(t)x}\,dt\\
       &=-\int_0^\tau \frac{d}{dt} \norm{ T_B(t)x}^2\,dt
      = \norm{x}^2-\norm{T_B(\tau)x}^2.
    }
Thus~\eqref{eq:NUObsAltChar} is equivalent to  non-uniform observability of the pair $(B^\ast,A_B)$ with parameters $\beta$ and $\tau$, which in turn is equivalent to  non-uniform observability of $(B^\ast,A)$ with parameters $\beta$ and $\tau$ by Lemma~\ref{lem:OutputMaps}(b).
If~\eqref{eq:NUObsAltChar} holds, then 
   non-uniform observability of $(B^\ast,A)$ and Theorem~\ref{thm:NUObstoResgrowth} imply 
    that $i\R\subseteq \rho(A_B)$ and $\norm{\RB}\lesssim 1+\abs{s}^{2\gb}$ for $s\in\R$.
\end{proof}

Note that by Theorem~\ref{kato} the norm $\norm{(I-A)^{-\gb} x}$ on the left-hand side of~\eqref{eq:NUObsAltChar} can be replaced by $\norm{(I-A_B)^{-\gb} x}$, or by $\norm{(-A)^{-\gb}x}$ if $0\in\rho(A)$.
Estimates similar to~\eqref{eq:NUObsAltChar} have been used in the literature in order to prove polynomial decay rates for $\SGB$ based on discrete-time iterations, 
especially for  
damped wave equations~\cite{Rus75} and coupled partial differential equations~\cite{Rauch,Duy07}. 
In particular,
in the special case $\gb=1/2$ condition~\eqref{eq:NUObsAltChar} is equivalent to the observability estimate~\citel{Duy07}{Eq.~(39)}.
Thus Theorem~\ref{thm:NUObstoResgrowth} improves and generalises the stability result in~\citel{Duy07}{Sec.~5} in the case where $A$ is skew-adjoint. 
Finally, if $A$ generates a contraction semigroup  and $B \in \Lin (U,X)$, then non-uniform observability of 
    $(B^\ast,A)$ with parameters $\beta\in[0,1]$ and $\tau>0$ implies~\eqref{eq:NUObsAltChar}

\subsection{Time-domain conditions for second-order problems}

In this section we study  non-uniform observability 
for second-order systems of the form
\eqn{
\label{eq:SecondOrderTimeDom}
  \ddot{w}(t)+L w(t)+D D^\ast\dot{w}(t) =0,\qquad t\ge0.
}
Throughout the section, $L$, $D$, $A$ and $B$ are as in Section~\ref{second_order_setup}. In the proofs of our results we also make use of the operator $\abs{A_{\rm diag}}: \Dom(\abs{A_{\rm diag}})\subseteq X\to X$ defined by
\eqn{
\label{eq:Absdiag}
\abs{A_{\rm diag}}=\pmat{\Azhalf &0\\0&\Azhalf }, \qquad \Dom(\abs{A_{\rm diag}}) = \Dom(A).
}
For second-order systems the concept of non-uniform observability in Definition~\ref{def:NUObs} has the following alternative characterisation.

\begin{proposition}
  \label{prp:NUObsAltWE}
Let $L$, $D$, $A$ and $B$ be 
 as in Section~\textup{\ref{second_order_setup}}.
The pair $(B^\ast,A)$ is non-uniformly observable
  with parameter $\gb\in [0,1]$ and $\tau>0$
    if and only if
\eq{
  \norm{L^{(1-\gb)/2}w_0}_{H}^2+ \norm{L^{-\gb/2}w_1}_{H}^2\lesssim \int_0^\tau\norm{D^\ast \dot{w}(t)}_U^2 \,dt ,
}
where $w$ is the (classical) solution of
\eq{
  \ddot{w}(t)+L w(t)=0, \qquad w(0)=w_0\in H_1,  \quad \dot{w}(0)=w_1\in H_{1/2}.
}
\end{proposition}

\begin{proof}
  Fix $\gb\in[0, 1]$ and $\tau>0$.
Since $0\in \rho(A)$, the norm $\norm{(I-A)^{-\gb}x}$ in~\eqref{eq:NUObs} can be replaced by $\norm{(-A)^{-\gb}x}$.
If $\abs{A_{\rm diag}}$ is defined as in~\eqref{eq:Absdiag}, then
  for $x=(x_1,x_2)\in X=H_{1/2}\times H$ we have
  \eq{
    \norm{-A\inv x}^2_X
    &= \norm{L\inv x_2}_{H_{1/2}}^2 + \norm{x_1}_{H}^2
    = \norm{\abs{A_{\rm diag}}\inv x}_X^2.
  }
Thus Theorem~\ref{kato} implies that
    $\norm{(-A)^{-\gb}x}\lesssim \norm{\abs{A_{\rm diag}}^{-\gb} x}\lesssim \norm{(-A)^{-\gb}x}$ for all $x\in X$, 
  and hence
  \eq{
    \norm{(-A)^{-\gb}x}^2\lesssim \norm{L^{(1-\gb)/2}x_1}_{H}^2 + \norm{L^{-\gb/2}x_2}_{H}^2
    \lesssim\norm{(-A)^{-\gb}x}^2
  }
  for all $x=(x_1,x_2)\in X$.
The claims now follow from the fact that for $x=(w_0,w_1)\in \Dom(A)= H_1\times H_{1/2} $ we have $T(t)x\in \Dom(A)$ and $B^\ast T(t)x=D^\ast \dot{w}(t)$ for all $t\geq 0$.
\end{proof}

We conclude this section by studying the damped second-order equation~\eqref{eq:SecondOrderTimeDom}
for  damping operators $D\in \Lin(U,H)$ satisfying
\begin{equation}\label{compar}
\norm{\Azmhalf[\ga] w}\lesssim \norm{D^\ast w}\lesssim \norm{\Azmhalf[\ga]w}, \qquad w\in H,
\end{equation}
for some \ieq{\ga\in(0,1]}.
Non-uniform stability of such equations was studied in~\cite{LiuZha15}, and in~\cite{DelPat21} in a slightly more general setting. 
The assumptions on $D$ are satisfied in particular for the damping operator $D=\Azmhalf[\ga]$ in the wave and beam equations in~\citel{DelPat21}{Sec.~15}, as well as for the damped Rayleigh plate studied in~\citel{LiuZha15}{Sec.~3}.
We shall show that such damping implies non-uniform observability in the sense of Definition~\ref{def:NUObs}.
In particular, the following proposition reproduces the result of~\citel{LiuZha15}{Thm.~2.1} for a symmetric damping operator of the form $D D^\ast$ and for $\ga\in(0,1]$. The degree of stability was shown in~\citel{LiuZha15}{Sec.~3} to be optimal for a class of systems with a diagonal~$L$.

\begin{proposition}
  \label{prp:WEFracDamping}
Let $L$, $D$, $A$ and $B$ be 
 as in Section~\textup{\ref{second_order_setup}}
with
 $D\in \Lin(U,H)$ such that~\eqref{compar} holds
for some constant $\ga\in(0,1]$.
   Then 
   the pair $(B^\ast,A)$
   is non-uniformly observable 
   with parameter $\gb=\ga$ and for any $\tau> (\pi+2\pi^3)\norm{\Azmhalf }\inv$.
Moreover, the semigroup $\SGB$ generated by $A_B$ is polynomially stable and there exists a constant $C>0$ such that 
  \eq{
    \norm{T_B(t)x}\leq \frac{C}{t^{1/(2\ga)}}\norm{A_Bx} , \qquad  x\in \Dom(A_B), \ t>0.
  }
\end{proposition}

\begin{proof}
  We begin by showing that if we define $ (0,I)\in \Lin(X,H)$, then the pair $( (0,I),A)$ is exactly observable
for any $\tau> (\pi+2\pi^3)\norm{\Azmhalf }\inv$.
  To prove this, let $\gd_0=\norm{\Azmhalf }$. Then Lemma~\ref{lem:WE_WPs} shows that every non-trivial $(s,\gd_0)$-wavepacket $x$ of $A$ 
  has the form $x=(w,i\sign(s)\Azhalf w)$ where $w$ is a $(\abs{s},\gd_0)$-wavepacket of $\Azhalf$, and for such $x$ we have
\eq{
  \norm{(0,I) x}_{H} = \norm{L^{1/2}w}_{H}
  = \frac{1}{\sqrt{2}}\norm{x}_X.
}
Since $\norm{(0,I)}=1$, it follows from~\citel{Mil12}{Cor.~2.17} that 
the pair $( (0,I),A)$ is exactly observable for $\tau>
 (\pi+2\pi^3)\norm{\Azmhalf }\inv$.  

If $\abs{A_{\rm diag}}$ is defined as in~\eqref{eq:Absdiag}, then 
$\abs{A_{\rm diag}}\inv$ commutes with $A$, and thus the same is true for $\abs{A_{\rm diag}}^{-\ga}$.
Similarly as in the proof of Proposition~\ref{prp:NUObsAltWE} we have
  $\norm{(-A)^{-\ga}x}\lesssim \norm{\abs{A_{\rm diag}}^{-\ga}x}\lesssim \norm{(-A)^{-\ga}x}$ for all $x\in X$.
We may write 
$B^\ast=(0,D^\ast)=(0,D^\ast\Azhalf[\ga])\abs{A_{\rm diag}}^{-\ga}$, 
where the operator $D^\ast\Azhalf[\ga] $ is  bounded below by assumption. 
Thus, for any fixed $\tau>(\pi+2\pi^3) \norm{\Azmhalf }\inv$ and for all $x\in \Dom(A)$,  exact observability of $( (0,I),A)$ implies that
\eq{
  \int_0^\tau \hspace{-1ex} \norm{B^\ast T(t)x}_U^2\,dt
  &\gtrsim \hspace{-.6ex}\int_0^\tau \hspace{-1ex} \norm{(0,I) T(t) \abs{A_{\rm diag}}^{-\ga} x}_{H}^2\,dt\\
    &\gtrsim \norm{\abs{A_{\rm diag}}^{-\ga}x}_X^2
    \gtrsim \norm{(-A)^{-\ga}x}_X^2.
}
 Theorem~\ref{kato} now implies that the pair $(B^\ast,A)$ is non-uniformly observable with parameter $\gb=\ga$ and with the chosen 
$\tau> (\pi+2\pi^3)\norm{\Azmhalf }\inv$.
Since $A$ is skew-adjoint, the remaining claims follow from Theorem~\ref{thm:NUObstoResgrowth}.
\end{proof}

\section{Optimality of the decay rates}
\label{sec:Optimality}

In this section we investigate the optimality
of our non-uniform decay estimates for the damped semigroup $\SGB$. In particular,
we present lower bounds for $\norm{T_B(\cdot) A_B \inv}$,
which in turn
impose a restriction on the growth of $N\inv(t)$ as $t\to\infty$ in  estimate~\eqref{eq:NUStabFreq}. Our results will allow us to
 show that our resolvent estimates and the resulting non-uniform decay rates 
are optimal or near-optimal in several situations of interest, including various PDE models to be explored in Section~\ref{sec:PDEmodels}.
 As we shall see in Section~\ref{sec:2DwaveWPsuboptimality} below, however, there are also situations of interest in which our techniques  fail to produce sharp results and, in particular, the resolvent estimates obtained by means of non-uniform Hautus tests or wavepacket conditions are necessarily suboptimal.

Our first result of this section provides a lower bound for
the resolvent norm $\norm{\RB}$ near eigenvalues of $A$.
Here $A$ is assumed to be skew-adjoint, but it need not have compact resolvent.
In this section we define
$B_s:=(B^\ast P_s)^\ast \in \Lin(U,X)$,
 where $P_s:=\chi_{\{s\}}(-iA)$ is the orthogonal projection onto $\ker(is-A)$.
Note that $\ran(B_s)\subseteq \ker(is-A)$
and hence we subsequently consider $B_s$ as an operator
from $U$ into $\ker(is-A)$.
If $\ran(B_s)=\ker(is-A)$, 
we write $B_s\pinv \in \Lin(\ker(is-A),U)$ for the \emph{Moore--Penrose pseudoinverse} of
$B_s$.
 If $\dim \ker (is-A)=1$ and $B_s\neq 0$, then $\norm{B_s\pinv}=\norm{B_s}\inv$.

\begin{proposition}
\label{prp:RGLowerBound}
Let  $A$ 
and $B$ satisfy
 Assumption~\textup{\ref{ass:ABBass}}
 and suppose that $A$ is skew-adjoint.
Suppose, in addition, that $i\R\subseteq\rho(A_B)$	and let $N:\R\to(0,\infty)$ be a function such that $\|(is-A_{B})\inv\|\le N(s)$ for all $s\in\R$.
Then
$\ran(B_s)=\ker(is-A)$
 for all $s\in\R$, and 
		$N(s) \geq\norm{B_s\pinv }^2$ for all
    $s\in\R$ such that $is\in\gs_p(A)$.
  \end{proposition}

 \begin{proof}

Fix $is\in\gs_p (A)$ and let $y\in \ker (is-A)$ be arbitrary. Then  $\iprod{y}{z}_X=\iprod{y}{P_s z}_X$ for all $z\in X$. Hence if  $x\in \Dom(A_B)$ is such that
$(is-A_B)x=y$, then
$$\iprod{y}{z}_X=\iprod{(is-A_{-1})x}{P_sz}_{X_{-1},X_1}+\iprod{BB^\ast x}{P_sz}_{X_{-1},X_1}$$
for all $z\in X$. 
It is straightforward to show that
the first term on the right-hand side is zero, so by  definition of $B_s$ we have $\iprod{y}{z}_X=\iprod{B_s B^\ast x}{z}_X$ for all $z\in X$. Thus $B_s B^\ast x=y$.
   Since $y\in \ker (is-A)$ was arbitrary, we deduce that $\ran(B_s) =\ker (is -A)$, and in particular the Moore--Penrose pseudoinverse $B_{s}\pinv\in\Lin(\ker(is-A),U)$  of $B_s$ is well defined.
 Now $\|B_{s}\pinv y\|=\min\setm{\|u\|}{u\in U\mbox{ and }B_su=y}$,
 so by the identity $B_s B^\ast x = y$ and Lemma~\ref{lem:ABBresgrowthlemma} we have
 \eq{
   \norm{B_{s}\pinv y}^2\leq \norm{B^\ast x}^2
   = \norm{B^\ast \RB[is]y}^2
   \leq N(s)\norm{y}^2.
 }
This holds for all $y\in \ker (is-A)$, so
   $\norm{B_{s}\pinv}^2
\leq N(s)$.
\end{proof}

\begin{remark}\label{rem:gap}
  If the skew-adjoint operator $A$ in Proposition~\ref{prp:RGLowerBound} has pure point spectrum and the eigenvalues of $A$ are uniformly separated (but not necessarily simple), so that the spectral gap
	\[ 
	d_{{\rm gap}}: =\inf \,\setm{ \abs{s-s'} }{  is , is'\in\gs (A),\ s \not= s'} 
	\] 
	is strictly positive, then the norms $\norm{B_s\pinv}$ can be used to construct functions $\gd$ and $\gg$ for which Theorem~\ref{thm:WPtoRG} provides the optimal rate of resolvent growth.
  Indeed, if
  we choose a constant $\gd(s)\equiv \gd:=  d_{{\rm gap}}/4>0$, then all non-trivial $(s,\gd(s))$-wavepackets of $A$ are eigenvectors corresponding to the unique eigenvalue $is'$ in the interval $i(s-\gd,s+\gd)$. 
		If $B_{s'}$ maps surjectively onto $\ker (is'-A)$ (which is in fact necessary for $is'$ to be an element of the resolvent set $\rho(A_B)$),  then
  for every $x\in\ker(is'-A)$ we have
  \eq{
    \norm{B^\ast x}
    = \norm{B_{s'}^\ast x }
    \geq \norm{B_{s'}\pinv}\inv\norm{x}.
  }
  The wavepacket condition~\eqref{eq:WPcond} is therefore satisfied for every bounded function $\gg$ such that $\gg(s)\equiv \norm{B_{s'}\pinv}\inv$ whenever $s\in (s'-\gd,s'+\gd)$ and $is'\in\gs (A)$.
  Theorem~\ref{thm:WPtoRG} then implies that $\norm{\RB}\lesssim \gg(s)^{-2}$, and  by Proposition~\ref{prp:RGLowerBound} this estimate is sharp in the sense that $N(s') \geq \gg(s')^{-2}$ whenever $is'\in\gs (A)$ and $N$ is as in~\eqref{eq:NUStabFreq}.
\end{remark}

As Proposition~\ref{prp:RGLowerBound} provides us with  a lower bound for the resolvent of $A_B$, we proceed by showing that such a bound  implies a lower bound for 
orbits of $(T_B(t))_{t \ge 0}.$ This will be done in a more general context in anticipation of possible applications elsewhere. It was shown in~\cite[Prop.~1.3]{BatDuy08} that one cannot in general hope for a better rate of decay than that given in Theorem~\ref{thm:sg_decay}.  The following new result is a consequence of~\cite[Prop.~1.3]{BatDuy08}. More specifically, it is a variant of a claim made in~\cite[Thm.~1.1]{BatChi16} and in the discussion following~\cite[Thm.~4.4.14]{AreBat11book}, and it gives a sharp optimality statement of the same type but which, crucially, is applicable as soon as one has a lower bound for the resolvent along a (possibly unknown) unbounded sequence of points on the imaginary axis. The proof uses the same ideas as that of~\cite[Cor.~6.11]{BatChi16}.

\begin{proposition}
  \label{prp:opt}
Let $X$ be a Banach space and let $(T (t))_{t\ge0}$ be a bounded semigroup on $X$ whose generator $A$ satisfies $i\R\subseteq\rho(A)$. Suppose that $N:\R_+\to(0,\infty)$ is a continuous non-decreasing function such that $N(s)\to\infty$ as $s\to\infty$ and
\begin{equation}\label{eq:limsup}
\limsup_{|s|\to\infty}\frac{\|(is-A)\inv\|}{N(|s|)}>0.
\end{equation}
Then there exists $c>0$ such that
\begin{equation}\label{eq:lb}
\limsup_{t\to\infty}N\inv(ct)\|T (t)A\inv\|>0,
\end{equation}
and if $N$ has positive increase then~\eqref{eq:lb} holds for all $c>0$.
\end{proposition}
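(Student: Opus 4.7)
I would argue by contradiction, following the strategy of \citel{BatChi16}{Cor.~6.11}. Suppose that \eqref{eq:lb} fails for \emph{every} $c>0$; then for each $c>0$,
\begin{equation*}
\lim_{t\to\infty}M\inv(ct)\,\|T(t)A\inv\|=0,
\end{equation*}
so that $\|T(t)A\inv\|=o\bigl(M\inv(ct)\inv\bigr)$ as $t\to\infty$ for every $c>0$. In particular, taking $c$ as a free parameter one obtains a family of $o$-type decay rates for the smoothed orbits.

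The next step is to convert this semigroup decay rate into a resolvent estimate on the imaginary axis by invoking \citel{BatDuy08}{Prop.~1.3}. That result shows that for a bounded semigroup whose generator has spectrum disjoint from $i\R$, a decay estimate $\|T(t)A\inv\|=O(1/\phi(t))$ with $\phi$ continuous non-decreasing and tending to infinity forces a resolvent bound of the form $\|(is-A)\inv\|=O\bigl(\phi\inv(\gk|s|)\bigr)$ for some explicit $\gk>0$, with the analogous $o$-version passing through verbatim. Applying this with $\phi=M\inv$, and exploiting the freedom in the rescaling parameter $c$, produces
\begin{equation*}
\|(is-A)\inv\|=o\bigl(M(\tilde c\,|s|)\bigr),\qquad |s|\to\infty,
\end{equation*}
for some $\tilde c=\tilde c(c)>0$ that can be made arbitrarily small by choosing $c$ small in the contradiction hypothesis. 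Fixing $c$ so that $\tilde c\leq 1$, monotonicity of $M$ yields $M(\tilde c|s|)\leq M(|s|)$, and thus $\|(is-A)\inv\|/M(|s|)\to 0$ as $|s|\to\infty$, contradicting \eqref{eq:limsup}. This establishes the existence of some $c>0$ satisfying \eqref{eq:lb}.

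For the positive-increase part, suppose $M$ satisfies \eqref{eq:pos_inc} with exponent $\ga>0$. Inverting the inequality $M(\gl s)/M(s)\geq c_\ga\gl^\ga$ for $\gl\geq 1$, $s\geq s_0$, one obtains a polynomial upper bound $M\inv(\mu t)\leq c_\ga^{-1/\ga}\mu^{1/\ga}M\inv(t)$ for $\mu\geq 1$ and $t$ large. Combined with monotonicity of $M\inv$ in the opposite direction, this gives $M\inv(ct)\asymp M\inv(c_0 t)$ as $t\to\infty$ for any fixed $c,c_0>0$, with comparison constants depending only on $c/c_0$. Hence once \eqref{eq:lb} is known for some $c_0>0$, it automatically holds for every $c>0$.

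\textbf{Expected main obstacle.} The delicate point is bookkeeping of the constant $\tilde c=\tilde c(c)$ produced by \citel{BatDuy08}{Prop.~1.3}: one must verify that as the time-rescaling constant $c$ in the decay hypothesis is varied, the corresponding frequency-rescaling constant $\tilde c$ in the resolvent conclusion can be driven into the range $(0,1]$ so that $M(\tilde c|s|)\leq M(|s|)$ can be used. Without positive increase of $M$, this rescaling is the only tool available to identify the powers of $M$ appearing on the two sides, so extracting the correct dependence $\tilde c(c)$ from the proof of \citel{BatDuy08}{Prop.~1.3} is the key technical step; with positive increase the issue disappears since the rescaling becomes harmless.
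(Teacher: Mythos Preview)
Your contradiction strategy built on \citel{BatDuy08}{Prop.~1.3} is viable and in the same spirit as the paper, but the mechanism you describe for the key step is wrong, and the paper sidesteps the difficulty by arguing directly rather than by contradiction.

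The specific problem is your claim that the argument-scaling constant $\tilde c$ in the conclusion $\|(is-A)\inv\|=o(M(\tilde c|s|))$ depends on the time-rescaling constant $c$ and can be driven to zero by taking $c$ small. It cannot: the constant produced by \citel{BatDuy08}{Prop.~1.3} depends only on $K=\sup_{t\ge0}\|T(t)\|$, not on the decay function you feed in. If you set $\phi(t)=M\inv(ct)$ you obtain $\|(is-A)\inv\|\le(K_0/c)\,M(\kappa|s|)$ with $\kappa$ \emph{fixed}; varying $c$ only moves the prefactor. The related claim that ``the $o$-version passes through verbatim'' is also too quick: an $o$-decay estimate does not by itself yield an $o$-resolvent estimate via \citel{BatDuy08}{Prop.~1.3}. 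The contradiction argument can nonetheless be completed, but the correct bookkeeping is the reverse of what you wrote: use the $o$ in the decay (for a single fixed $c$) to absorb the fixed constant $\kappa$ into the argument of $M$, obtaining $\|(is-A)\inv\|\le(K_0/c)\,M(|s|)$ eventually; then send $c\to\infty$ to force $\|(is-A)\inv\|=o(M(|s|))$, contradicting \eqref{eq:limsup}.

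The paper's proof is shorter and avoids these rescaling subtleties. It works directly with $m(t)=\sup_{\tau\ge t}\|T(\tau)A\inv\|$, combines \citel{BatDuy08}{Prop.~1.3} with \eqref{eq:limsup} to extract a constant $c>0$ and a sequence $(s_n)$ along which $M(s_n)<c\,m\inv((2s_n)\inv)$, sets $t_n=m\inv((2s_n)\inv)$, and checks by elementary manipulations that $M\inv(ct_n)\|T(t_n)A\inv\|\ge(2K)\inv$. Your treatment of the positive-increase clause via $M\inv(ct)\asymp M\inv(t)$ is correct and matches the paper, which invokes \citel{RozSei19}{Prop.~2.2} for precisely this fact.
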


\begin{proof}
    Consider the continuous non-decreasing function $n:\R_+\to (0,\infty)$ defined by $n(t)=\sup_{\tau\ge t}\|T(\tau)A\inv\|$, $t\ge0$, and let $n\inv$ denote any right-inverse of $n$. Note that
    $n$ takes strictly positive values since by~\eqref{eq:limsup} the semigroup $\SG$ cannot be nilpotent, and that $n(t)\to 0$ as $t\to\infty$ by~\cite[Thm.~1.1]{BatDuy08}.
Furthermore, by~\eqref{eq:limsup} and~\cite[Prop.~1.3]{BatDuy08} we may find a constant $c>0$ and an increasing sequence $(s_k)_{k\in\N}$ of positive numbers such that $s_k\to\infty$ as $k\to\infty$ and $N(s_k)<c n\inv((2s_k)\inv)$ for all $k\in\N$. Let $t_k=n\inv((2s_k)\inv)$ for $k\in\N$. Then $t_k\to\infty$ as $k\to\infty$ because $N$ is assumed to be unbounded, and we have $s_k=(2n(t_k))\inv$, $k\in\N$. Now $N(N\inv(ct_k))=ct_k>N(s_k)$
and hence $N\inv(ct_k)> (2n(t_k))\inv$ for all $k\in\N$. Letting $K=\sup_{t\ge0}\|T (t)\|$, it follows that
$$\frac{1}{2N\inv(ct_k)}\le n(t_k)\le K \|T(t_k)A\inv\|,\qquad k\in\N,$$
 which establishes~\eqref{eq:lb}. If $N$ has positive increase then by~\cite[Prop.~2.2]{RozSei19} we have $N\inv(t)\asymp N\inv(ct)$ as $t\to\infty$ for all $c>0$, which immediately yields the second statement.
\end{proof}

\begin{remark}
If $N$ is not assumed to have positive increase then it is possible for~\eqref{eq:limsup} to be satisfied but for~\eqref{eq:lb} to hold only for certain values of $c>0$. We refer the interested reader to the discussion following~\cite[Rem.~3.3]{RozSei19} for an  example of a contraction semigroup on a Hilbert space such that~\eqref{eq:limsup} holds for $N(s)=\log(s)$, $s\ge2$, and $\|T (t)A\inv\|=O(e^{-t/2})$ as $t\to\infty$. In particular, \eqref{eq:lb} does not hold for any $c\in(0,1/2)$.
\end{remark}

The considerations above lead to the following statement, which is the main result of this section. It is an immediate consequence of Propositions~\ref{prp:RGLowerBound} and~\ref{prp:opt}, both of which are applicable under more general assumptions. The result provides lower bounds for orbits of $(T_B(t))_{t \ge 0}$
under an assumption on the action of $B^\ast$ on eigenvectors of $A$ associated with imaginary eigenvalues $is_k\in\sigma_p(A)$.
These lower bounds  will allow us to  show in Section~\ref{dwe1d} below that the non-uniform decay rates we obtain from our observability conditions are optimal (or near-optimal) in several concrete situations of interest.

\begin{theorem}
  \label{thm:OptimMain}
Let  $A$ and $B$ satisfy
Assumption~\textup{\ref{ass:ABBass}}, suppose that $A$ is skew-adjoint and that $i\R\subseteq\rho(A_B)$.
If there exist a sequence $(s_k)_{k\in\N}\subseteq \R$,
 $\abs{s_k}\to \infty$ as $k\to \infty$ and a continuous non-decreasing function $N_0: \R_+\to (0,\infty)$ of positive increase such that $\norm{B_{s_k}\pinv}^2\geq N_0(\abs{s_k})$ for all $k\in\N$, then
\eq{
  \limsup_{t\to\infty} N_0\inv(t) \|T_B(t) A_B^{-1}\|>0.
}
 Consequently, if~\eqref{eq:NUStabFreq} holds then there exists a sequence $(t_k)_{k\in\N}\subseteq (0,\infty) $ with $t_k\to \infty$ as $k\to\infty$ such that $N\inv(t_k)\lesssim N_0\inv(t_k)$ for all $k\in\N$. 
\end{theorem}

We finish this section with a result of independent interest, offering  
an asymptotic estimate for a collection of eigenvalues of $A_B$
under a uniform spectral gap condition of the type discussed in Remark~\ref{rem:gap}. 

\begin{proposition}\label{accurate}
    Let  $A$ be skew-adjoint and suppose that $B\in \Lin(U,X)$ is compact.
 Suppose further that 
 $\gs (A) = \gs_p (A)$ and that this set is infinite, that $\dim \ker (is -A) = 1$ for every $is \in\gs (A)$,
and  that $d_{{\rm gap}}>0$.
Then
 there exist a family $(\lambda_s)_{is\in\gs_p (A) }$ and $s_0 \geq 0$ such that $\lambda_s\in \gs (A_B)$ for $\abs{s}\geq s_0$ and $|\lambda_s - (is -\norm{B_s}^2 )| = o (\norm{B_s}^2)$ as $\abs{s}\to\infty$.
\end{proposition}
\begin{proof}
First, we  note that 
\[\setm{\gl\in\C_-}{\ker(I+B^\ast\RA[\gl]B)\neq \set{0}}\subseteq \gs_p(A_B).\]
 Indeed, if $\gl\in\C_-$ and $ u\in U\setminus \set{0} $ are such that $B^\ast \RA[\gl]B u=-u$, then $(\gl-A_B)\RA[\gl]Bu
=0$. Since $\RA[\gl]Bu\neq 0$ (otherwise
$u=-B^\ast\RA[\gl]Bu=0$), we conclude that $\gl\in\gs_p(A_B)$. This reduces our problem to finding suitable points $\lambda\in\C_-$ with  $\ker(I+B^\ast\RA[\gl]B)\neq \set{0}$.

Our assumptions on  $A$ and compactness of $B$ imply that $\norm{B_s} = \norm{P_sB}\to 0$ as $\abs{s}\to\infty$.
Fix $is\in\gs_p (A)$ such that $|s|\geq 9 \norm{B}^2$ and $\norm{B_s}^2 \leq d_{{\rm gap}}$. 
By Proposition~\ref{prp:RGLowerBound},
$B_s$ maps surjectively onto $\ker (is-A)$, and therefore $B_s \neq  0$.
 Let
 \[ F_s(\gl)=(\gl-is)(I+B^\ast \RA[\gl]B).
\]
Note that for $\gl\in\rho (A)$ we have
$\ker(I+B^\ast\RA[\gl]B)\neq \set{0}$
if and only if
$\ker(F_s(\gl))\neq \set{0}$.
  Our aim is to apply Rouch\'e's theorem for operator-valued functions~\citel{GohSig72}{Thm.~2.2}.
We have $F_s(\gl)=G_s(\gl)+H_s(\gl)$ with
\eq{
  G_s(\gl)  = \gl-is +B_s^\ast B_s , \qquad
  H_s(\gl) =  (\gl-is)B^\ast\RA[\gl]B-B_s^\ast B_s .
}
Since 
 $B_s^\ast B_s$ is a rank-one operator and $\dim X >1$, $G_s(\gl)$ is boundedly invertible if and only if $\gl\notin \set{is- \norm{B_s}^2,is}$. Let $r_s = \norm{B_s}^2/2$ and define the closed disk $\Omega_s = \setm{\gl\in\C}{\abs{\gl-(is-\norm{B_s}^2)}\leq r_s }\subseteq\C_-$ and $\Gamma_s = \partial \Omega_s$.  Then $G_s(\gl)$ is boundedly invertible for all $\gl\in \Omega_s\setminus \set{is-\norm{B_s}^2}$, and for all $\gl\in\Gamma_s$ we have
\eq{
  \norm{G_s(\gl)\inv}= \frac{1}{\dist(\gl,\set{is-\norm{B_s}^2,is})} = \frac{1}{r_s}.
}

Let $J_s= \setm{ s'\in\R }{ |s' -s| \leq |s|/2}$. For every $s'\in \R \setminus J_s$  and every $\gl\in\Omega_s$ we have
\eq{
|\gl -is' | & \geq |is' -is| - |\gl -is| \geq \frac{|s|}{2} - \frac{3}{2} \norm{B_s}^2 \geq \frac{|s|}{3} ,
}
where the last inequality follows from the condition $|s|\geq 9\norm{B}^2$. Hence, for every  $\lambda\in\Omega_s$,
\eq{
\norm{B^\ast \RA[\gl] \chi_{\R \setminus J_s}(-iA)B } & \leq \norm{B^\ast} \, \sup_{|s' -s|>|s|/2} \frac{1}{|\gl -is'|} \, \norm{B} \leq \frac{3 \norm{B}^2}{|s|} .
}
Thus, for every $u\in U$ with $\norm{u}\leq 1$, by the Cauchy-Schwarz inequality,  the uniform spectral gap assumption and Bessel's identity, we see that
\eq{
  \frac{\norm{H_s(\gl)u}}{\abs{\gl-is}}
&\leq \norm{B^\ast \RA[\gl] \chi_{\R \setminus J_s}(-iA)Bu} \\&\qquad\qquad +
 \Norm{B^\ast \RA[\gl] \chi_{ J_s}(-iA)Bu - \frac{B_s^\ast B_su}{\gl -is}} \\
& \leq \frac{3 \Norm{B}^2}{|s|} + \Norm{\sum_{is'\in (\gs_p (A) \cap iJ_s )  \setminus \{is\} } \frac{1}{\gl -is'} B_{s'}^\ast B_{s'} u } \\
& \leq \frac{3 \norm{B}^2}{|s|}  + \sup_{|s'| \geq |s|/2} \norm{B_{s'}^\ast}\, \left( 2\sum_{j=1}^\infty \frac{1}{d^2_{{\rm gap}}j^2} \right)^\frac12 \, \left( \sum_{is'\in\gs_p (A)} \norm{B_{s'} u}^2 \right)^\frac12 \\
&\leq \frac{3 \norm{B}^2}{|s|} + \frac{\pi \norm{B}}{\sqrt{3} d_{{\rm gap}}}\, \sup_{|s'| \geq |s|/2} \norm{B_{s'}} .
}
Thus $\norm{H_s(\gl)}\leq q_s \abs{\gl-is}$ for some $q_s\geq 0$ satisfying $q_s\to 0$ as $|s|\to \infty$. Then, for $|s|$ large enough  and $\gl\in\Gamma_s$,
\eq{
  \norm{G_s(\gl)\inv H_s(\gl)}
  \leq \frac{q_s \abs{\gl-is}}{r_s}
  \leq 3q_s
  <1.
}
Rouch\'e's theorem~\citel{GohSig72}{Thm.~2.2} now implies that for every $is\in\gs_p (A)$ with $|s|$ sufficiently large there exists $\gl_s\in\Omega_s$ such that $\ker(F(\gl_s))\neq \set{0}$, and the proof is complete.
\end{proof}

Observe that if $A$ and $B$ are as in Proposition~\ref{accurate} 
and if $i\R\subseteq\rho(A_B)$, then the result implies 
that $\liminf_{\abs{s} \to \infty}\|B_s\|^2\|(is -A_B)^{-1}\|>0.$ Then using Proposition~\ref{prp:opt}
as in Theorem~\ref{thm:OptimMain}, we obtain a lower bound for  $\|T_B(\cdot) A_B^{-1}\|$ along a sequence
$(t_k)_{k\in\N}\subseteq(0,\infty)$ with $t_k\to\infty$ as $k\to\infty$. We omit a precise formulation of the corresponding statement since it is completely analogous
to Theorem~\ref{thm:OptimMain}.

\section{Non-uniform stability of damped partial differential equations}
\label{sec:PDEmodels}

In this section we apply our
general results to several concrete partial differential equations of different types. In particular, we consider damped wave equations on one- and two-dimensional spatial domains, a one-dimen\-sion\-al fractional Klein--Gordon equation, and a damped Euler--Bernoulli beam equation.
We also refer to a recent article~\cite{SuTuc20} for an application of Theorem~\ref{thm:WPtoRG} in the study of a coupled PDE system describing the dynamics of linearised water waves.

\subsection{Wave equations on two-dimensional domains}
\label{sec:2Dwaves}

In this section we consider wave equations on bounded simply connected domains $\Omega\subseteq \R^2$
 which are either convex or have sufficiently regular (say $C^2$) boundary to ensure
that the domain
of the Dirichlet Laplacian on $\Omega$ is included in $H^2(\Omega)$.
The wave equation with viscous damping and Dirichlet boundary conditions is given by
\begin{subequations}
  \label{eq:2Dwave}
  \eqn{
    &w_{tt}(\xi,t)-\Delta w(\xi,t)+ b(\xi)^2w_t(\xi,t) =0, \qquad \xi\in \Omega, ~t>0,\\
    & w(\xi,t)=  0, \hspace{5.35cm} \xi\in\partial \Omega,~ t> 0,\\
    & w(\cdot,0)= w_0(\cdot)\in H^2(\Omega)\cap H_0^1(\Omega), \qquad w_t(\cdot,0)=w_1(\cdot)\in H_0^1(\Omega).
  }
\end{subequations}
Here $b\in \Lp[\infty](\Omega)$ is the non-negative damping coefficient. It is well known that the geometry of $\Omega$ and the region where $b(\cdot)>0$ have great impact on the asymptotic properties of the wave equation. In the framework of Section~\ref{second_order_setup} we set $H=\Lp[2](\Omega)$,  $L=-\Delta$ with domain $H_1=H^2(\Omega)\cap H_0^1(\Omega)$, and
 define $U=\Lp[2](\Omega)$ and $D\in \Lin(\Lp[2](\Omega))$ by $D u = bu$ for all $u\in\Lp[2](\Omega)$.
Since $D\in \Lin(U,H)$, the function $\mu_0$
 in  Section~\ref{sec:freqdomSecondOrder} can be chosen to be bounded.

\subsubsection{Exact observability of the Schr\"odinger group}

In order to apply Proposition~\ref{prp:SchObstoRG} to  the damped wave equation~\eqref{eq:2Dwave} we need to understand the observability properties of the Schr\"odinger group on $\Omega$. Of particular interest here is the case of exact observability of the Schr\"odinger group, which corresponds to~\eqref{eq:SchObs} being satisfied for constant functions $M_0$ and $m_0$. In such cases Proposition~\ref{prp:SchObstoRG} immediately yields the resolvent bound $\|(is-A_{B})\inv\|\lesssim 1+s^2,\ s \in \R,$ so by Theorem~\ref{thm:sg_decay} (and Remark~\ref{rem:BT}) classical solutions of the corresponding abstract Cauchy problem decay like (and in fact faster than) $t^{-1/2}$ as $t\to\infty$. This was first proved in~\cite{AnaLea14}, but we mention that, similarly as in~\citel{JolLau20}{App.~B}, Proposition~\ref{prp:SchObstoRG} also allows us to deal with the much more general situation where~\eqref{eq:SchObs} is satisfied for functions $M_0$ and $m_0$ which satisfy suitable lower bounds but need not be constant. We take advantage of this added generality in Section~\ref{sec:BurZui} below.

The study of energy decay of damped waves via observability conditions has a long history~\cite{Sle74, Rus75,Ben78a,Leb96,AmmTuc01,BurHit07,CavMa19arxiv,LetSun21,LauLea21},
and in particular it predates the resolvent approach.
It is not surprising, therefore, that there is a rich literature on exact observability of the Schr\"odinger group, giving many concrete examples to which our abstract theory may be applied.
For instance, if $\Omega$ is a rectangle then it follows from a classical result due to Jaffard~\cite{Jaf90} that the Schr\"odinger group corresponding to our system is exactly observable for every non-negative $b\in L^\infty(\Omega)$ such that $\essup_{\xi\in\omega}b(\xi)>0$ for some non-empty open set $\omega\subseteq\Omega$; see~\cite{BurZwo19} for an even stronger result on the torus.
Similarly, it follows from~\cite[Thm.~9]{BurZwo04} that if $\Omega$ is the Bunimovich stadium then the corresponding Schr\"odinger group is exactly observable provided the damping $b$ has strictly positive essential infimum on a neighbourhood of one of the sides of the rectangle meeting a half-disk and also at one point on the opposite side.
This allows us to recover under a slightly weaker assumption the decay rate obtained in~\cite[Thm.~1.1]{BurHit07}.
Finally, if $\Omega$ is a disk then by~\cite[Thm.~1.2]{AnaLea16}  the Schr\"odinger group is exactly observable whenever $\essup_{\xi\in\omega}b(\xi)>0$ for some  open subset $\omega$ of $\overline{\Omega}$ such that $\omega\cap\partial\Omega\not=\emptyset$.
In fact, this condition is also necessary for exact observability, as can be seen by considering so-called \emph{whispering gallery modes}. We thus recover the decay rate for classical solutions obtained in~\cite[Rem.~1.7]{AnaLea16}. Further examples of when the Schr\"odinger group is exactly observable, including also higher-dimensional situations, may be found in~\cite[Sec.~2A]{AnaLea14}.
We point out in passing that there is also scope to apply directly the wavepacket result Theorem~\ref{thm:WPtoRG_WE}, which underlies Proposition~\ref{prp:SchObstoRG}. One case in which this is possible is if one knows that $\essup_{\xi\in\omega}b(\xi)>0$ for some open set $\omega\subseteq\Omega$ such that $\|w\|_{L^2(\omega)}\ge c\|w\|_{L^2(\Omega)}$ for some constant $c>0$ and all eigenfunctions $w$ of the Dirichlet Laplacian on $\Omega$.
This would allow us to take $\gamma_0$ to be constant in Theorem~\ref{thm:WPtoRG_WE}, provided we know how to choose $\delta_0$ in such a way that the $(s,\gd_0(s))$-wavepackets of $(-\Delta)^{1/2}$ are eigenfunctions associated with a single eigenvalue of $\Delta$. The appropriate lower bound is obtained in~\cite{HasHil09} in the case where $\Omega$ is a polygonal region and $\omega$  contains a neighbourhood  of each of the vertices of $\Omega$, and in fact these assumptions can be relaxed somewhat; see~\cite[Rem.~4]{HasHil09}. Choosing an appropriate $\delta_0$, however, requires detailed information on the distribution of the eigenvalues of the Dirichlet Laplacian on $\Omega$, which imposes a rather severe restriction on the domains $\Omega$ for which this approach is likely to bear fruit.

\subsubsection{Large damping away from a submanifold}\label{sec:BurZui}

We consider the damped Klein--Gordon equation on the square $\Omega=(0,1)^2$. This is a slight variant of~\eqref{eq:2Dwave}
in which $\Delta$ is replaced by $\Delta-m$ for some $m >0$.
Furthermore, we view $\Omega$ as the 2-torus $\mathbb T^2$ by imposing periodic rather than Dirichlet boundary conditions, thus allowing us to use the results of~\cite{BurZui16}.
We apply our abstract results, setting $H=L^2(\mathbb T^2)$ and $L=-\Delta+m$ with  domain $H_1=H^2(\mathbb T^2)$ in the framework of Section~\ref{second_order_setup}, in order to derive resolvent estimates
 under the assumption that the damping coefficient $b$
satisfies a certain type of lower bound
away from a proper submanifold $\Sigma$ of $\mathbb{T}^2$. 
A typical example would be for $\Sigma$ to be a circle of the form $\Sigma=\setm{(\xi_1,\xi_2)\in\Omega}{\xi_1\in(0,1)}$ for some fixed $\xi_2\in(0,1)$, but the results in~\cite{BurZui16} also apply in a much more general setting than this. The following result is a simple extension of~\cite[Cor.~1.3]{BurZui16} in our special case. The distance referred to here is the geodesic distance on the manifold $\mathbb{T}^2$.

\begin{corollary}\label{cor:dist}
Let $r:\R_+\to\R_+$ be a non-decreasing function satisfying $r(s)>0$ for all $s>0$, and suppose that $b(\xi)^2\ge r(\dist(\xi,\Sigma))$ for all $\xi\in\mathbb{T}^2$. Then $i\R\subseteq\rho(A_{B})$ and  there exist $\varepsilon\in(0,1)$ and  $s_0>0$ such that
$$\|(is-A_{B})^{-1}\|\lesssim r(\varepsilon|s|^{-1/2})^{-1}, \qquad |s|\ge s_0.$$
\end{corollary}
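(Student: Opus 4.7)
The strategy is to reduce the corollary to Proposition~\ref{prp:SchObstoRG} by extracting the required Schrödinger observability estimate from \cite[Cor.~1.3]{BurZui16}. After passing to the orthogonal complement of $\ker(A)$, on which $A$ is skew-adjoint with $0\in\rho(A)$, the standing Assumption~\ref{ass:ABBass} is satisfied in full: parts (1)--(3) follow from the second-order framework of Example~\ref{ex:AbstractWE}, and part (4) is automatic because $B_0 \in\Lin(L^2(\mathbb{T}^2))$ is bounded. The aim is then to produce functions $M_S, m_S$ so that~\eqref{eq:SchObs} holds with $(1+s^2)M_S(|s|)m_S(|s|) \lesssim r(\varepsilon|s|^{-1/2})\inv$.

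The central input is the Schrödinger observability estimate of Burq--Zuily: under the hypothesis $b(\xi)^2 \geq r(\dist(\xi,\Sigma))$, \cite[Cor.~1.3]{BurZui16} combined with a standard spectral-window localisation produces constants $\varepsilon\in (0,1)$ and $C,s_0>0$ such that for every $s\geq s_0$ and every $w\in \Dom(A_0)\cap\ker(A_0)^\perp$,
\begin{equation*}
\norm{w}^2 \leq \frac{C}{1+s^2}\norm{(s^2-A_0)w}^2 + \frac{C}{r(\varepsilon s^{-1/2})}\norm{B_0^\ast w}^2.
\end{equation*}
This is precisely~\eqref{eq:SchObs} with $M_S(s) = C(1+s^2)\inv$ and $m_S(s) = C\,r(\varepsilon s^{-1/2})\inv$, and the constant $\eta = \inf_{s\geq 0}M_S(s)(1+s)^2 = C$ is strictly positive. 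Proposition~\ref{prp:SchObstoRG} together with Remark~\ref{rem:asymp} then yields $i\R\setminus (-is_0,is_0)\subset \rho(A-BB^\ast)$ and $\norm{(is-A+BB^\ast)\inv}\lesssim (1+s^2)M_S(|s|)m_S(|s|)\lesssim r(\varepsilon|s|^{-1/2})\inv$ for $|s|\geq s_0$.

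The bounded segment $(-is_0,is_0)\cap i\R$ is included in $\rho(A-BB^\ast)$ by a standard compactness and unique-continuation argument: the resolvent of $A$ is compact on $\ker(A)^\perp$, so $\gs(A-BB^\ast)\cap i\R$ is a discrete set of eigenvalues, and such eigenvalues are ruled out by Holmgren-type unique continuation for the wave equation on $\mathbb{T}^2$, using that $r(s)>0$ for $s>0$ forces $b>0$ on a set of positive measure. The main obstacle I anticipate is the transcription of \cite[Cor.~1.3]{BurZui16} into the exact additive form~\eqref{eq:SchObs}: one must spectrally localise $w$ to a frequency window of width $\asymp s$ around $s^2$, invoke the Burq--Zuily observability on the window, and use the trivial elliptic bound $\norm{w}\leq |s^2-\mu|\inv \norm{(s^2-A_0)w}$ off the window, keeping the constants uniform in $s$ and compatible with the monotonicity and positivity properties of $r$.
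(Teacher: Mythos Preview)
Your overall architecture matches the paper's: both derive a Schrödinger observability inequality of the form~\eqref{eq:SchObs} with $M_S(s)\asymp s^{-2}$ and $m_S(s)=r(\varepsilon s^{-1/2})\inv$, then feed it into Proposition~\ref{prp:SchObstoRG} and Remark~\ref{rem:asymp}. For $i\R\subset\rho(A-BB^\ast)$ the paper simply cites~\cite[Lem.~4.2]{AnaLea14}; your unique-continuation sketch is a reasonable substitute.

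The gap is in how you obtain the observability inequality. You black-box this step by appealing to ``\cite[Cor.~1.3]{BurZui16} combined with a standard spectral-window localisation'', and your closing paragraph proposes a \emph{frequency} decomposition (localise $w$ to a spectral window around $s^2$, use Burq--Zuily on the window, elliptic estimate off it). This is not what the paper does, and it is not clear it works without reproducing the Burq--Zuily argument: Corollary~1.3 in \cite{BurZui16} is itself a damped-wave resolvent estimate, not a Schrödinger observability statement, so invoking it here is close to circular.

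The paper instead uses~\cite[Thm.~1.1]{BurZui16}, the eigenfunction \emph{concentration} estimate, via a \emph{spatial} decomposition. One sets $\omega_{\varepsilon,s}=\{\xi:\dist(\xi,\Sigma)<\varepsilon|s|^{-1/2}\}$; Theorem~1.1 gives
\[
\|w\|_{L^2(\omega_{\varepsilon,s})}\lesssim \varepsilon^{1/2}\bigl(|s|^{-1}\|(s^2+\Delta)w\|+\|w\|\bigr),
\]
while on the complement $b^2\ge r(\varepsilon|s|^{-1/2})$ by monotonicity of $r$, so $r(\varepsilon|s|^{-1/2})\inv\|bw\|^2\ge\|w\|^2-\|w\|_{L^2(\omega_{\varepsilon,s})}^2$. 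Adding these and choosing $\varepsilon$ small enough to absorb the $\varepsilon^{1/2}\|w\|$ term (via Young's inequality) delivers~\eqref{eq:SchObs} directly. The point is that the $\varepsilon$-dependence in the concentration estimate is what makes the absorption possible; no spectral localisation is needed.
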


\begin{proof}
The inclusion $i\R\subseteq\rho(A_{B})$ may be obtained for instance by following the argument used in the proof of~\cite[Lem.~4.2]{AnaLea14}. Note in particular that the origin is removed from the spectrum as a result of the shift we apply to the Laplacian.
We now prove the resolvent estimate.
Given $\varepsilon\in(0,1)$ and  $s\in\R\setminus\{0\}$ let $\omega_{\varepsilon,s}=\setm{\xi\in\mathbb{T}^2}{\dist(\xi,\Sigma)< \varepsilon|s|^{-1/2}}$. By~\cite[Thm.~1.1]{BurZui16} (but see also~\cite{Sog88}) there exists $s_0>m$ such that
\begin{equation}\label{eq:BuZu}
\|w\|_{L^2(\omega_{\varepsilon,s})}\lesssim \varepsilon^{1/2}\big( |s|^{-1}\|(s^2-L)w\|_{L^2(\mathbb{T}^2)}+\|w\|_{L^2(\mathbb{T}^2)}\big)
\end{equation}
for all $w\in H^2(\mathbb{T}^2)$, $\varepsilon\in(0,1)$ and $s\in\R$ with $|s|\ge s_0$. 
By  assumption we have $b(\xi)^2\ge r(\varepsilon|s|^{-1/2})$ for all $\xi\in\mathbb{T}^2\setminus\omega_{\varepsilon,s}$. Thus if we let $m_\varepsilon(s)=r(\varepsilon|s|^{-1/2})^{-1}$ for $\varepsilon\in(0,1)$ and $|s|\ge s_0$, then
$$m_\varepsilon(s)\|bw\|_{L^2(\mathbb{T}^2)}^2\ge m_\varepsilon(s)\|bw\|_{L^2(\mathbb{T}^2\setminus\omega_{\varepsilon,s})}^2\ge \|w\|_{L^2(\mathbb{T}^2)}^2-\|w\|_{L^2(\omega_{\varepsilon,s})}^2,$$
and hence by~\eqref{eq:BuZu} and an application of Young's inequality we may choose  $\varepsilon\in(0,1)$ sufficiently small to ensure that
$$\|w\|_{L^2(\mathbb{T}^2)}^2\lesssim |s|^{-2}\|(s^2-L)w\|_{L^2(\mathbb{T}^2)}^2+m_\varepsilon(s)\|bw\|_{L^2(\mathbb{T}^2)}^2$$
for all $w\in H^2(\mathbb{T}^2)$ and all $s\in\R$ such that $|s|\ge s_0$. The result now follows from Proposition~\ref{prp:SchObstoRG} and Remark~\ref{rem:asymp}.
\end{proof}

We may use Corollary~\ref{cor:dist} to study the asymptotic behaviour of solutions of the damped Klein--Gordon equation. In particular, if $r(s)=cs^{2\kappa}$ for some constants $c,\kappa>0$ then Corollary~\ref{cor:dist} yields the estimate $\|(is-A_{B})^{-1}\|\lesssim 1+|s|^{\kappa}$ for $s\in\R$, and hence by Theorem~\ref{thm:sg_decay} any classical solution decays at the rate $t^{-1/\kappa}$. Note that this is worse than the rate obtained under additional assumptions in~\cite{LeaLer17,DatKle20} for the classical damped wave equation~\eqref{eq:2Dwave}, which formally corresponds to the choice $m=0$ in our setting. On the other hand, it is stated in~\cite[Rem.~1.5]{BurZui16} that in general the rate $t^{-1/\kappa}$ cannot be improved. The main value of Corollary~\ref{cor:dist} lies in the fact that it leads to interesting non-polynomial resolvent estimates whenever the function $r$ providing the lower bound is chosen appropriately.

\subsubsection{Suboptimality of the observability and wave\-packet conditions}
  \label{sec:2DwaveWPsuboptimality}

  In this section we discuss certain natural limitations of our results in Section~\ref{sec:freqdomresults}, and in particular describe situations where the non-uniform decay rates obtained by our methods are suboptimal. 
  As shown in~\cite{BurHit07,AnaLea14,LeaLer17,DatKle20,Sun21} in the case of multi-dimensional wave equations with viscous damping, rates of non-uniform decay are dependent not only on the location of the damping but also on the smoothness of the damping coefficient $b$.
By studying the damped wave equation~\eqref{eq:2Dwave} on a square $\Omega = (0,1)^2$ we can illustrate
that the resolvent growth rates in Sections~\ref{sec:freqdomresults} and~\ref{sec:timedomresults} are inherently suboptimal due to the fact that our observability concepts --- the non-uniform Hautus test, the wavepacket condition, the observability of the Schr\"odinger group and the non-uniform observability ---
are unable to detect the degree of smoothness of the damping coefficient $b$.

  For this purpose, let
  $\gw=(0,1/2)\times (0,1)$.
  For any arbitrarily small $\eps\in(0,1/2)$ we may as in~\citel{BurHit07}{Sec.~3} define a smooth non-negative damping coefficient $b_\eps$ such that $\supp b_\eps\subseteq \omega$, $\norm{b_\eps}_{\Lp[\infty]}\leq 1$, and $\norm{(is-A_{B_\eps})\inv}\lesssim 1+\abs{s}^{1+\eps},\ s \in \R,$
  where $B_\eps\in \Lin(\Lp[2](\Omega),X)$  is the damping operator associated with $b_\eps$.
Now consider the damping coefficient
  $b_\chi = \chi_\gw$, and denote the damping operator associated with this function by $B_\chi\in \Lin(\Lp[2](\Omega),X)$.
  For this damping coefficient the optimal order of resolvent growth is known to be $1+\abs{s}^{3/2}$~\cite{Sta17,AnaLea14}, and  in particular $\limsup_{\abs{s}\to\infty} \abs{s}^{-3/2}\norm{(is-A_{B_\chi})\inv}>0$.
  However, since $b_\chi(\xi)\geq b_\eps(\xi)$ for all $\xi\in\Omega$, we clearly have
  \eq{
    \norm{B_\chi^\ast x}\geq \norm{B_\eps^\ast x},  \qquad  x\in X.
  }
  Hence the non-uniform Hautus test~\eqref{eq:NUHautus}, the wavepacket condition~\eqref{eq:WPcond}, observability of the Schr\"odinger group~\eqref{eq:SchObs}, or non-uniform observability~\eqref{eq:NUObs} for the pair $(B_\eps^\ast,A)$ immediately implies the same property for the pair $(B_\chi^\ast,A)$ with the same parameters.
  In particular,  any resolvent estimate of the form $\norm{(is-A_{B_\eps})\inv}\leq N(s),\ s \in \R,$  obtained from Theorem~\ref{thm:HautustoResgrowth}, Theorem~\ref{thm:WPtoRG}, Proposition~\ref{prp:SchObstoRG} or Theorem~\ref{thm:NUObstoResgrowth} also implies that $\norm{(is-A_{B_\chi})\inv}\leq N(s)$ for $ s \in \R$. However, by~\citel{AnaLea14}{Prop.~B.1}
  we then also have $\limsup_{\abs{s}\to\infty}\abs{s}^{3/2}N(s)>0$. This means that $N(s)$ is a suboptimal upper bound for  $\norm{(is-A_{B_\eps})\inv}$ as $\abs{s}\to \infty$.

Comparing the rates of non-uniform decay of~\eqref{eq:2Dwave} with the two damping profiles $b_\eps$ and $b_\chi$ also shows that in the second part of Theorem~\ref{thm:HautustoResgrowth} it is in general impossible to choose functions $M$ and $m$ satisfying $M+m\lesssim N$.
To see this,
let
 $M_\eps$ and $m_\eps$ be functions $M$ and $m$ corresponding to the damping $b_\eps$.
 Then the inequality $b_\chi\geq b_\eps$ implies that  $(B_\chi^\ast,A)$, too, satisfies the Hautus test for the same functions $M_\eps$ and $m_\eps$, and by Theorem~\ref{thm:HautustoResgrowth} we have $\norm{(is-A_{B_\chi})\inv}\lesssim M_\eps(s)+m_\eps(s)$, $s \in \R$. However, since the optimal order of resolvent growth for
the damping $b_\chi$ is $\abs{s}^{3/2}$, the conclusion cannot be true unless
\eq{
  \limsup_{\abs{s}\to \infty}\, \abs{s}^{3/2}\big(M_\eps(s)+m_\eps(s)\big)>0 .
}
Thus $M_\eps+m_\eps$ provides a strictly worse resolvent bound than the estimate $\norm{(is-A_{B_\eps})\inv}\lesssim 1+\abs{s}^{1+\eps}$, $s \in \R,$ obtained in~\citel{BurHit07}{Sec.~3}.

Finally, comparison of the damping coefficients $b_\eps$ and $b_\chi$
further shows that a dissipative perturbation of a generator of a polynomially stable semigroup can strictly worsen the rate of decay.
Indeed, since $b_\chi\geq b_\eps$ by construction, the ``additional damping'' of the difference $b_\Delta = b_\chi-b_\eps\geq 0$
increases the asymptotic rate of resolvent growth as $|s|\to\infty$ from at most $ \abs{s}^{1+\eps}$ to $ \abs{s}^{3/2}$.
In terms of the semigroup generators this means that $A_{B_\eps}$ has a strictly slower asymptotic resolvent growth than $A_{B_\chi}$ even though $A_{B_\chi}$ is a dissipative perturbation of $A_{B_\eps}$.

\subsection{Damped wave equations on one-dimensional domains}\label{dwe1d}

\subsubsection{Damping at a single interior point}
  \label{sec:wave1Dpointwisedamping}

  In this section we consider the one-dimensional wave equation with pointwise damping studied in~\citel{AmmTuc01}{Sec.~5.1}; see also~\cite{RzeSch18} for a closely related problem on the stability of two serially connected strings. Our arguments rely essentially on ideas from~\cite{AmmTuc01}.
Given an irrational number $\xi_0\in (0,1)$, let us consider the problem
\begin{subequations}
    \label{eq:rs_fav}
  \eqn{
&w_{tt}(\xi,t)-w_{\xi\xi}(\xi,t)+w_t(t,\xi_0)\delta_{\xi_0}(\xi) =0, \qquad \xi\in (0,1), ~t>0,\\
     & w(0,t)= 0, \qquad w(1,t)=0, \hspace{2.9cm} t> 0,\\
    & w(\cdot,0)= w_0(\cdot)\in H^2(0,1)\cap H_0^1(0,1), \quad w_t(\cdot,0)=w_1(\cdot)\in H_0^1(0,1).
  }
\end{subequations}

As shown in~\citel{AmmTuc01}{Sec.~5.1}, the system~\eqref{eq:rs_fav} satisfies the assumptions in Section~\textup{\ref{second_order_setup}}
with $H=\Lp[2](0,1)$, 
$L=-\partial_{\xi\xi}$ with domain $H_1=H^2(0,1)\cap H_0^1(0,1)$, and $L$ has positive square root with domain $H_{1/2}=H_0^1(0,1)$.  
The damping operator $D$ is given by $D u=\gd_{\xi_0}u$ for all $u\in U= \C$, where $\gd_{\xi_0}$ is the Dirac delta distribution at $\xi=\xi_0$, and we indeed have $D \in \Lin(\C,H_{-1/2})$ and $D^\ast \in \Lin(H_{1/2},\C)$, where $H_{-1/2}=H^{-1}(0,1)$ and $H_{1/2}=H_0^1(0,1)$.
In order to describe the domain $\Dom(A_{B})$,
note that $A_{-1}\inv B=(-L\inv \gd_{\xi_0},0)=(z,0)$, where $z\in H_0^1(0,1)$ is the solution of the differential equation $z''=\gd_{\xi_0}$ with boundary conditions $z(0)=z(1)=0$ in $H^{-1}(0,1)$. We thus have
\eq{
  z(\xi) =
  \begin{cases}
    \xi(1-\xi_0), \qquad 0<\xi\leq \xi_0,\\
    \xi_0(1-\xi), \qquad \xi_0<\xi\leq 1.
  \end{cases}
}
Since $\Dom(A_{B})=\setm{x\in X_B}{ A_{-1} x-BB^\ast x\in X}$ by
Remark~\ref{rem:ABBAltDomain}, we
deduce that
(cf. ~\citel{AmmTuc01}{Sec.~5.1})
\eq{
  \Dom(A_{B}) = \setm{(u+z(\cdot)v(\xi_0),v)}{ u\in H^2(0,1)\cap H_0^1(0,1), ~ v\in H_0^1(0,1)},
}
and therefore classical solutions of~\eqref{eq:rs_fav} correspond to initial conditions
\eqn{
  \label{eq:1DwavePointwiseICs}
  w_0 = w_{00} + z(\cdot)w_1(\xi_0),
  \quad
  w_{00}\in H^2(0,1)\cap H_0^1(0,1),
  \quad
  w_1\in H_0^1(0,1).
}

Since the eigenvalues $\gl_n^2=n^2\pi^2$, $n\in\N$, and corresponding normalised eigenfunctions $\phi_n(\cdot)=\sqrt{2}\sin(n\pi \cdot)$ of $L$ are known explicitly, we may use the wavepacket condition in Theorem~\ref{thm:WPtoRG_WE} to analyse the stability properties of the damped system~\eqref{eq:rs_fav}.
Indeed, the eigenvalues $\gl_n=n\pi$, $n\in\N$, of $L^{1/2}$ have a uniform gap, so we may choose $\gd(s)\equiv \pi/4$. The non-trivial $(s,\gd(s))$-wavepackets of $L^{1/2}$ are then simply multiples of the eigenfunctions $\phi_n$ for $n\in\N$ such that $ n\pi\in (s-\pi/4,s+\pi/4) $.
  For any $n\in\N$ we have
  \eq{
    \abs{D^\ast \phi_n}
    = \abs{\phi_n(\xi_0)}
    = \sqrt{2}\abs{\sin(n\pi \xi_0)}.
  }

  In order to determine the rate of resolvent growth we need to estimate the coefficients $\abs{D^\ast \phi_n}$ from below.
   This certainly requires $\xi_0$ to be an irrational number, but in fact we shall need to assume more, namely that $\xi_0$ is   \emph{badly approximable} by rationals. It is known, for instance, that given any $\eps>0$ almost every irrational $\xi_0\in(0,1)$ has the property that
\begin{equation}\label{eq:log_lb}
\min_{m\in\N}\Abs{\xi_0-\frac mn}\ge \frac{1}{n^2\log(n)^{1+\eps}}
\end{equation}
  for all sufficiently large $n\ge2$, while simultaneously for almost every irrational $\xi_0\in(0,1)$ there exist rationals $m/n$ with arbitrarily large values of $n\ge2$ such that
 \begin{equation}\label{eq:log_ub}
 \Abs{\xi_0-\frac mn}\le \frac{1}{n^2\log(n)};
 \end{equation}
see for instance~\cite[Thm.~32]{Khi64book}. A rather special class of irrationals $\xi_0\in(0,1)$ is the set of irrationals that have \emph{constant type}. These are commonly defined to be those irrational numbers which have uniformly bounded coefficients in their partial fractions expansions. Irrationals of constant type include all irrational \emph{quadratic numbers}, that is to say irrational solutions of quadratic equations with integer coefficients. As shown in~\citel{Lan95book}{Ch.~II, Thm.~6}, an irrational number $\xi_0\in(0,1)$ has constant type if and only if there is a constant $c_{\xi_0}>0$ such that
\begin{equation}\label{eq:Dir_lb}
\min_{m\in\N}\Abs{\xi_0-\frac mn}\ge \frac{c_{\xi_0}}{n^2},\qquad n\in\N.
\end{equation}
  It follows from the Dirichlet approximation theorem~\citel{Lan95book}{Ch.~II,Thm.~1} that for any irrational number $\xi_0\in(0,1)$ there exist rationals $m/n$ with arbitrarily large values of $n\in\N$ such that
  \begin{equation}\label{eq:Dir_ub}
  \Abs{\xi_0-\frac mn}\le \frac{1}{n^2}.
  \end{equation}
The following result yields (essentially) sharp rates of decay for the energy of our damped system for  irrational numbers $\xi_0\in(0,1)$ of different nature.

\begin{corollary}   \label{cor:1DwavePointwiseDamping}
Let $w$ be the (classical) solution of~\eqref{eq:rs_fav} corresponding to initial conditions as in~\eqref{eq:1DwavePointwiseICs}.
 	\begin{itemize}
 \item [(a)] Fix $\eps>0$. For almost every irrational number $\xi_0\in(0,1)$ there exists $C_\eps>0$ such that
        \begin{equation}\label{eq:pw_damp_ub}
      \norm{(w(\cdot,t),w_t(\cdot,t))}_{H^1\times \Lp[2]}\leq  C_\eps\frac{\log(t)^{1+\eps}}{{t}^{1/2}}\norm{(w_{00},w_1)}_{H^2\times H^1},\quad t\ge2.
    \end{equation}
    Moreover, the rate is almost optimal in the sense that if $r:\R_+\to(0,\infty)$ is any function such that $r(t)=o(t^{-1/2}\log(t))$ as $t\to\infty$, then there exist $w_0,w_1$ as in~\eqref{eq:1DwavePointwiseICs} for which $r(t)\inv \norm{(w(\cdot,t),w_t(\cdot,t))}_{H^1\times \Lp[2]}$ is unbounded as $t\to\infty$.
\item [(b)]
If $\xi_0\in(0,1)$ is an irrational number of constant type then there exists $C>0$ such that
    $$     \norm{(w(\cdot,t),w_t(\cdot,t))}_{H^1\times \Lp[2]}\leq  \frac{C}{{t}^{1/2}}\norm{(w_{00},w_1)}_{H^2\times H^1},\qquad t\ge1.$$
    Moreover, the rate is optimal in the sense that if $r:\R_+\to(0,\infty)$ is any function such that $r(t)=o(t^{-1/2})$ as $t\to\infty$, then there exist $w_0,w_1$ as in~\eqref{eq:1DwavePointwiseICs} for which $r(t)\inv \norm{(w(\cdot,t),w_t(\cdot,t))}_{H^1\times \Lp[2]}$ is unbounded as $t\to\infty$.
		\end{itemize}
  \end{corollary}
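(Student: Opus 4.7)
The plan is to verify the wavepacket condition of Theorem~\ref{thm:WPtoRG_WE} using the explicit eigendecomposition of $A_0=-\partial_{\xi\xi}$ together with Diophantine lower bounds on $|\sin(n\pi\xi_0)|$, pass to semigroup decay via Theorem~\ref{thm:sg_decay}, and establish optimality by combining Propositions~\ref{prp:RGLowerBound} and~\ref{prp:opt}. Set $\gd_0(s)\equiv\pi/4$. Because the eigenvalues $\{n\pi\}_{n\in\N}$ of $\Azhalf$ are $\pi$-separated, every nontrivial $(s,\pi/4)$-wavepacket of $\Azhalf$ is a scalar multiple of the normalized eigenfunction $\phi_n=\sqrt{2}\sin(n\pi\cdot)$ corresponding to the unique $n\in\N$ with $n\pi\in(s-\pi/4,s+\pi/4)$, and for such $s$ one has $|B_0^\ast\phi_n|=\sqrt{2}|\sin(n\pi\xi_0)|$. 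Writing $n\xi_0=m_n+\theta_n$ with $m_n\in\Z$ the nearest integer and $|\theta_n|\leq 1/2$ gives $|\sin(n\pi\xi_0)|=|\sin(\pi\theta_n)|\geq 2n|\xi_0-m_n/n|$; combining this with~\eqref{eq:Dir_lb} yields $|\sin(n\pi\xi_0)|\gtrsim 1/n$ for $\xi_0$ of constant type, while~\eqref{eq:log_lb} (which, by Khinchin's theorem, holds with $\eps/2$ in place of $\eps$ for a.e.\ irrational $\xi_0\in(0,1)$) yields $|\sin(n\pi\xi_0)|\gtrsim 1/(n\log(n)^{1+\eps/2})$ for all sufficiently large $n$. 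Setting $\gg_0$ accordingly (and to any positive constant on the gaps where the wavepacket space is trivial), Theorem~\ref{thm:WPtoRG_WE} together with Remark~\ref{rem:asymp} gives $\norm{\RB}\lesssim 1+s^2$ in the constant-type case and $\norm{\RB}\lesssim (1+s^2)\log(2+|s|)^{2+\eps}$ for a.e.\ $\xi_0$.

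Both majorants are dominated by continuous even functions $M$ of positive increase on $\R_+$ with $M\inv(t)\asymp t^{1/2}$ and $M\inv(t)\asymp t^{1/2}/\log(t)^{1+\eps/2}$, respectively, so Theorem~\ref{thm:sg_decay} delivers the abstract decay bounds $\|T_B(t)(A-BB^\ast)\inv\|=O(t^{-1/2})$ and $O(t^{-1/2}\log(t)^{1+\eps/2})$. To convert to energy, note that any $(w_0,w_1)$ of the form~\eqref{eq:1DwavePointwiseICs} lies in $\Dom(A-BB^\ast)$, and the distributional identities $-A_0z=\gd_{\xi_0}$ and $B_0B_0^\ast(w_0,w_1)=w_1(\xi_0)\gd_{\xi_0}$ produce the cancellation $(A-BB^\ast)(w_0,w_1)=(w_1,\partial_{\xi\xi}w_{00})\in X$, whose $X$-norm is bounded by $\|w_{00}\|_{H^2}+\|w_1\|_{H^1_0}$; since $\|\cdot\|_X$ is (equivalent to) the energy norm on $H_0^1(0,1)\times L^2(0,1)$, the upper bounds in the corollary follow (absorbing $1+\eps/2$ into the arbitrary $\eps>0$).

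For optimality, the one-dimensional kernels $\ker(in\pi-A)$ are spanned by the $X$-unit vectors $e_n=(\phi_n,in\pi\phi_n)/(n\pi\sqrt{2})$, and $B^\ast e_n=i\phi_n(\xi_0)/\sqrt{2}=i\sin(n\pi\xi_0)$, so $\norm{B_{n\pi}\pinv}^2=|\sin(n\pi\xi_0)|^{-2}$. The Dirichlet theorem~\eqref{eq:Dir_ub} furnishes, for every irrational $\xi_0$, infinitely many $n$ with $\norm{B_{n\pi}\pinv}^2\gtrsim n^2$, and Khinchin's bound~\eqref{eq:log_ub} furnishes, for a.e.\ $\xi_0$, infinitely many $n$ with $\norm{B_{n\pi}\pinv}^2\gtrsim (n\log n)^2$. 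Applying Proposition~\ref{prp:RGLowerBound} to obtain a lower bound on $\norm{\RB}$ along these subsequences, and then Proposition~\ref{prp:opt} with $M_0(s)=c(1+s)^2$ respectively $M_0(s)=c(1+s)^2\log(2+s)^2$ (both of positive increase), yields $\limsup_{t\to\infty}t^{1/2}\norm{T_B(t)(A-BB^\ast)\inv}>0$ and $\limsup_{t\to\infty}(t^{1/2}/\log t)\norm{T_B(t)(A-BB^\ast)\inv}>0$. A standard Banach--Steinhaus / Baire-category argument applied to the sets $\setm{y\in X}{\sup_{t\geq 1}r(t)\inv\|T_B(t)(A-BB^\ast)\inv y\|\leq k}$ (which are closed by lower semicontinuity) then produces a $y\in X$ and hence initial data $(w_0,w_1)=(A-BB^\ast)\inv y$, which by the description of $\Dom(A-BB^\ast)$ automatically has the form~\eqref{eq:1DwavePointwiseICs}, and whose orbit violates any prescribed decay rate $r$ satisfying $r(t)=o(t^{-1/2})$ or $r(t)=o(t^{-1/2}\log t)$, respectively.

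The hardest point is likely the distributional cancellation $(A-BB^\ast)(w_0,w_1)=(w_1,\partial_{\xi\xi}w_{00})$ in the $X$-valued sense, together with verifying that the pointwise trace $w_1(\xi_0)$ is meaningful via $H_0^1(0,1)\hookrightarrow C([0,1])$; this is precisely what makes the reduction from the abstract estimate~\eqref{eq:NUStabFreq} to the concrete energy bound~\eqref{eq:pw_damp_ub} clean, and dually ensures that the Banach--Steinhaus step at the end produces initial data of the required class rather than data in some larger extrapolation space.
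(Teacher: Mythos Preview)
Your proof is correct and follows essentially the same route as the paper's: verify the wavepacket condition via Diophantine lower bounds on $|\sin(n\pi\xi_0)|$, invoke Theorem~\ref{thm:WPtoRG_WE} and Theorem~\ref{thm:sg_decay} for the upper bound, compute $(A-BB^\ast)(w_0,w_1)=(w_1,\partial_{\xi\xi}w_{00})$ to translate into energy norms, and use Proposition~\ref{prp:RGLowerBound} together with Proposition~\ref{prp:opt} and uniform boundedness for optimality. Your use of $\eps/2$ in the Khinchin estimate is a harmless refinement yielding the slightly sharper intermediate exponent $1+\eps/2$, which of course still implies the stated bound.
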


  \begin{proof}
    The form of the estimates follows from Theorem~\ref{thm:sg_decay} and the property that for initial conditions as in~\eqref{eq:1DwavePointwiseICs} we have
\eq{
  \norm{A_{B}(w_0,w_1)}_X^2
= \norm{A (w_{00},w_1)}_X^2
= \norm{w_{00}''}_{\Lp[2]}^2 + \norm{w_1}_{H^1}^2.
}

In order to prove (a),
we will use
 Theorem~\ref{thm:WPtoRG_WE}. 
 As shown in~\cite[Lem.~5.3]{AmmTuc01}, 
we have 
 $\abs{s}\,\norm{D^\ast ( (1+is)^2+L_{-1})\inv D}\lesssim 1$, $s\in\R$.
To verify the wavepacket condition, 
 let $\xi_0$ be such that~\eqref{eq:log_lb} holds. 
For a given $n\ge2$, choose $m\in\N$ in such a way that $C_n\in\R$ defined by
      \begin{align*}
	\xi_0 = \frac{m}{n} + \frac{C_n}{n^2\log(n)^{1+\eps}}
      \end{align*}
    has minimal absolute value. By~\eqref{eq:log_lb} we have
  $1 \leq \abs{C_n} \leq n\log(n)^{1+\eps}/2$
for all sufficiently large $n\ge2$, and since $2 r/\pi\leq \sin(r)\leq r$ for $0\leq r\leq \pi/2$ it follows that
  \eq{
    \abs{D^\ast \phi_n}=\sqrt{2}\abs{\sin(n\pi \xi_0)}
= \sqrt{2}\Abs{\sin\left(\frac{C_n\pi}{n\log(n)^{1+\eps}}\right)}
\geq \frac{2\sqrt{2}}{n\log(n)^{1+\eps}}
  }
 for all sufficiently large $n\ge2$. 
Thus by Theorem~\ref{thm:WPtoRG_WE} we have $\|(is-A_{B})\inv\|\lesssim s^2\log(|s|)^{2+2\eps}$, $|s|\ge2$,  and hence~\eqref{eq:pw_damp_ub} follows from Theorem~\ref{thm:sg_decay}; see also~\cite[Thm.~1.3]{BatChi16}.

In order to prove the optimality statement, note that by~\eqref{eq:log_ub} there exist infinitely many $n\ge2$ for which $|C_n|\le\log(n)^\eps$ and therefore also
\eq{
    \abs{D^\ast \phi_n}= \sqrt{2}\Abs{\sin\left(\frac{C_n\pi}{n\log(n)^{1+\eps}}\right)}
\le \frac{\sqrt{2}\pi}{n\log(n)}.
  }
Now Proposition~\ref{prp:RGLowerBound} shows that
$$\limsup_{|s|\to\infty}\frac{\|(is-A_{B})\inv\|}{|s|^2\log(|s|)^2}>0,$$
and it follows from Proposition~\ref{prp:opt}  that
$$\limsup_{t\to\infty}\frac{\log(t)}{t^{-1/2}} \|T_{B}(t)A_{B}\inv\|>0.$$
Now the optimality statement follows from a simple application of the uniform boundedness principle.

  The argument for part (b) is entirely analogous and slightly simpler. It uses~\eqref{eq:Dir_lb} and~\eqref{eq:Dir_ub} in place of~\eqref{eq:log_lb} and~\eqref{eq:log_ub}, respectively.
  \end{proof}

\subsubsection{Weak damping}\label{subs_weak}
  \label{sec:wave1Dweakdamping}
  In this section we consider
  a \emph{weakly damped} wave equation on $(0,1)$, namely
  \begin{subequations}
    \label{eq:1DwaveWeakDamping}
    \eqn{
    \label{eq:1DwaveWeakDamping1}
    &w_{tt}(\xi,t)-w_{\xi\xi}(\xi,t)+b(\xi)\int_0^1 \hspace{-.1cm} b(r)w_t(r,t)dr =0, ~\; \xi\in (0,1), ~t>0,\\
    \label{eq:1DwaveWeakDamping2}
    & w(0,t)= 0, ~ w(1,t)=0, \hspace{4.25cm} t> 0,\\
    \label{eq:1DwaveWeakDamping3}
    & w(\cdot,0)= w_0(\cdot)\in H^2(0,1)\cap H_0^1(0,1), ~\; w_t(\cdot,0)=w_1(\cdot)\in H_0^1(0,1),
    }
  \end{subequations}
  where $b\in \Lp[2](0,1;\R)$ is the damping coefficient.
The wave equation has the form considered in Section~\textup{\ref{second_order_setup}} 
with $H=\Lp[2](0,1)$, 
 $L=-\partial_{\xi\xi}$ with domain $H_1=H^2(0,1)\cap H_0^1(0,1)$, and $L$ has positive square root with domain $H_{1/2}=H_0^1(0,1)$.
Moreover,  $U=\C$ and   $D\in \Lin(\C,H)$ is the rank-one operator defined by $D u=bu$ for all $u\in \C$.

The operator $L$ is the same as in
Section~\ref{sec:wave1Dpointwisedamping}.
Hence if we define $\gd(s)\equiv \pi/4$ then the non-trivial $(s,\gd(s))$-wavepackets of $\Azhalf $ are multiples of the normalised eigenfunctions $\phi_n$ for $n\in\N$ such that $ n\pi\in (s-\pi/4,s+\pi/4) $.
For any $n\in\N$ we have
\eq{
    \abs{D^\ast \phi_n} = \sqrt{2}\Abs{\int_0^1 b(\xi)\sin(n\pi \xi)d\xi}.
}
  For a large class of functions $b$ these Fourier sine series coefficients have explicit expressions.
  In order to have  $i\R\subseteq \rho(A_{B})$ we require that $D^\ast \phi_n\neq 0$ for all $n\in\N$, and the rate at which $\abs{D^\ast \phi_n}$ decays to zero as $n\to \infty$ determines the rate of resolvent growth. In the following we summarise the conclusions of Theorem~\ref{thm:WPtoRG} for a class of dampings.

  \begin{corollary}
    \label{cor:1DwaveWeakDamping}
    Assume that $\abs{D^\ast \phi_n}\gtrsim f(n\pi),\  n \in \N,$ for a continuous strictly decreasing function $f:\R_+\to(0,\infty)$ such that $f(\cdot)\inv$ has positive increase.
    Then there exist $C,t_0>0$ such that
    for all $w_0\in H^2(0,1)\cap H_0^1(0,1)$ and $w_1\in H_0^1(0,1)$ the (classical) solution $w$ of~\eqref{eq:1DwaveWeakDamping}
satisfies
    \eqn{
      \label{eq:WEweakDecay}
      \norm{(w(\cdot,t),w_t(\cdot,t))}_{H^1\times \Lp[2]}\leq  \frac{C}{N\inv(t)}\norm{(w_0,w_1)}_{H^2\times H^1}, \qquad t\geq t_0,
    }
    where $N\inv$ is the inverse function of $N(\cdot):=f(\cdot)^{-2}$.
Moreover, if there exists an increasing sequence
 $(n_k)_{k\in\N}\subseteq \N$
such that
$\abs{D^\ast\phi_{n_k}}\lesssim f(n_k\pi)$ for all $k\in\N$,
then the decay rate is optimal in the sense of Theorem~\textup{\ref{thm:OptimMain}}.
  \end{corollary}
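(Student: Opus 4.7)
The plan has two parts corresponding to the upper-bound and optimality claims.

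For the decay estimate, I would apply Theorem~\ref{thm:WPtoRG_WE} with the constant choice $\gd_0(s)\equiv \pi/4$. Since the eigenvalues of $\Azhalf $ on $\R_+$ are $\set{n\pi:n\in\N}$, which are separated by $\pi$, every non-trivial $(s,\gd_0(s))$-wavepacket of $\Azhalf $ is a scalar multiple of a single normalised eigenfunction $\phi_n$ with $n\pi\in (s-\pi/4,s+\pi/4)$. The monotonicity of $f$ and the hypothesis $\abs{B_0^\ast\phi_n}\gtrsim f(n\pi)$ then yield
\eq{
  \abs{B_0^\ast\phi_n}\gtrsim f(s+\pi/4),\qquad s\geq 0,\; n\pi\in(s-\pi/4,s+\pi/4),
}
so the wavepacket condition holds with $\gg_0(s)\asymp f(s+\pi/4)$. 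Theorem~\ref{thm:WPtoRG_WE} gives $i\R\subset\rho(A-BB^\ast)$ and $\norm{\RB}\lesssim f(\abs{s}+\pi/4)^{-2}$. Because $f\inv$ has positive increase and such functions absorb bounded additive shifts (after adjusting constants), this estimate is equivalent to $\norm{\RB}\lesssim f(\abs{s})^{-2} = M(\abs{s})$ for large $\abs{s}$. Theorem~\ref{thm:sg_decay} then delivers $\norm{T_B(t)(A-BB^\ast)\inv}=O(M\inv(t)\inv)$. Finally, since $B$ is bounded we have $\Dom(A-BB^\ast)=\Dom(A)=(H^2\cap H_0^1)\times H_0^1$, and
\eq{
  \norm{(A-BB^\ast)(w_0,w_1)}_X
  \lesssim \norm{A_0w_0}_{\Lp[2]}+\norm{B_0B_0^\ast}\,\norm{w_1}_{\Lp[2]}+\norm{w_1}_{H^1}
  \lesssim \norm{(w_0,w_1)}_{H^2\times H^1},
}
which yields~\eqref{eq:WEweakDecay}.

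For the optimality part I plan to apply Theorem~\ref{thm:OptimMain} along the sequence $s_j:=n_j\pi$, using $M_0=f^{-2}$ (which is continuous, strictly increasing, and of positive increase since $f\inv$ is). The operator $A$ is skew-adjoint with compact resolvent, $\ker(is_j-A)$ is one-dimensional and spanned by the unit vector $\psi_{n_j}:=(n_j\pi\sqrt{2})\inv(\phi_{n_j},in_j\pi\phi_{n_j})^\top$. A direct computation of the orthogonal projection $P_{\{is_j\}}$ applied to $Bu=(0,bu)^\top\in X$ gives
\eq{
  \iprod{Bu}{\psi_{n_j}}_X = -\frac{iu}{\sqrt{2}}\, B_0^\ast\phi_{n_j},
}
so $\norm{B_{s_j}}=2^{-1/2}\abs{B_0^\ast\phi_{n_j}}$ and, by one-dimensionality of the eigenspace, $\norm{B_{s_j}\pinv}=\sqrt{2}\,\abs{B_0^\ast\phi_{n_j}}\inv$. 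The assumed upper bound $\abs{B_0^\ast\phi_{n_j}}\lesssim f(n_j\pi)$ then implies $\norm{B_{s_j}\pinv}^2\gtrsim f(\abs{s_j})^{-2}=M_0(\abs{s_j})$, and Theorem~\ref{thm:OptimMain} concludes that $\limsup_{t\to\infty} M_0\inv(t)\norm{T_B(t)(A-BB^\ast)\inv}>0$, which is precisely the optimality statement.

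The only non-routine step I anticipate is the passage from the bound $\norm{\RB}\lesssim f(\abs{s}+\pi/4)^{-2}$ provided by Theorem~\ref{thm:WPtoRG_WE} to the cleaner form $\lesssim f(\abs{s})^{-2}$, which requires a short lemma showing that bounded additive shifts are absorbed by functions whose inverse has positive increase; alternatively one can apply Theorem~\ref{thm:sg_decay} directly to the shifted bound and observe that $f(\cdot+\pi/4)^{-2}$ has the same inverse asymptotics as $M=f^{-2}$ up to an additive constant, which becomes negligible under $1/M\inv(t)$. All other computations are straightforward adaptations of the ingredients already assembled in Sections~\ref{sec:freqdomresults} and~\ref{sec:Optimality}.
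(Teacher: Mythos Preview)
Your proposal is correct and follows essentially the same route as the paper. For the decay estimate the paper uses exactly your choices $\gd_0\equiv\pi/4$ and $\gg_0(s)=f(s+\pi/4)$, invokes Theorem~\ref{thm:WPtoRG_WE} to get $\norm{\RB}\lesssim f(\abs{s}+\pi/4)^{-2}$, and then disposes of the shift via the exact identity $M\inv(\cdot)=\tilde{M}\inv(\cdot)+\pi/4$ with $\tilde{M}(s)=f(s+\pi/4)^{-2}$, which is precisely your second alternative; the paper does not spell out the $\norm{(A-BB^\ast)(w_0,w_1)}_X\lesssim\norm{(w_0,w_1)}_{H^2\times H^1}$ bound or the verification of the hypotheses of Theorem~\ref{thm:OptimMain}, so your treatment of the optimality part is simply more detailed than the paper's (which leaves it implicit).
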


  \begin{proof}
If $\abs{D^\ast \phi_n}\gtrsim f(n\pi),\ n \in \N,$
    		then the wavepacket condition in~\eqref{eq:WE_WPform} is satisfied for
    $\delta_0=\pi/4$ and
$\gg_0(s)= f(s+\pi/4)$. 
Moreover, since $D\in \Lin(\C,H)$, we have 
 $\abs{s}\,\norm{D^\ast ( (1+is)^2+L)\inv D}\lesssim 1$, $s\in\R$.
Thus
Theorem~\ref{thm:WPtoRG_WE} implies that
$\|(is -A_{B})^{-1}\|\lesssim f(\abs{s}+\pi/4)^{-2}$, $s \in \R,$
and  Theorem~\ref{thm:sg_decay} yields~\eqref{eq:WEweakDecay} 
with the function $N_0$ defined by $N_0(s)= f(s+\pi/4)^{-2}$ for $s>0$. The claim now follows from the fact that
$N\inv=N_0\inv+\pi/4$.
  \end{proof}

   For the particular damping functions $b$ defined by $b(\xi) = 1-\xi$, $b(\xi) = \xi^2(1-\xi)$ and $b(\xi)=\chi_{(0,\xi_0)}(\xi)$, where $\xi_0\in (0,1)$ is an irrational of {constant type}, the optimal decay rates are given
   by (writing $b_n=D^\ast \phi_n$ for brevity)
  \begin{subequations}
    \label{eq:1DwaveWeakDampingCoeffs}
    \eqn{
      b(\xi) &= 1-\xi,  && b_n = \frac{\sqrt{2}}{n\pi},  &~ & N\inv(t)\inv\lesssim t^{-1/2},\\
      b(\xi) &= \xi^2(1-\xi),  && b_n = \frac{2 \sqrt{2}(2(-1)^n-1)}{n^3\pi^3},  & ~ & N\inv(t)\inv\lesssim t^{-1/6},\\
      b(\xi) &= \chi_{(0,\xi_0)}(\xi) , && b_n = \frac{\sqrt{2}(1-\cos(n\pi \xi_0))}{n\pi},  & ~ & N\inv(t)\inv\lesssim t^{-1/6}.
    }
  \end{subequations}

The required upper and lower bounds for $|D^\ast \phi_n|$ in the third example follow by arguments similar to those used in the proof of Corollary~\ref{cor:1DwavePointwiseDamping}, once again using~\eqref{eq:Dir_lb} and~\eqref{eq:Dir_ub}. Optimality in all three examples is a consequence of Theorem~\ref{thm:OptimMain}.

\begin{remark}\label{rem:DampingClass}
The above discussion implies that the Fourier sine series coefficients
$b_n=D^\ast \phi_n$
of the damping $b$
determine the resolvent growth and thus the rate of energy
decay in~\eqref{eq:1DwaveWeakDamping}. So it is natural
to try to relate the energy decay to the properties of $b$ and $(b_n)_{n\in\N}$ directly.
However, it is difficult to give a succinct answer here without specifying a precise class of functions $b$.
First note that since $b \in L^2(0,1)$, we have
$(b_n)_{n\in\N}\in\ell^2$.
On the other hand,  the results in~\cite{Naz98} show that for any $(c_n)_{n\in\N}\in \ell^2$ with $ c_n \ge 0$ there exists $b \in C[0,1]$
such that
$|b_n|\ge c_n$ for all $n\in\N$,
and thus any rate of decay that can be achieved with a damping function $b \in L^2(0,1)$
can also be achieved with a more regular function $b \in C[0,1]$.
However,
imposing
further regularity properties on $b$, such as
H\"older type conditions, changes the situation substantially.

In general, finer estimates for decay of $(b_n)_{n\in\N}$ depend heavily on the modulus of continuity
(or the integral modulus of continuity) of $b$, and conversely for $(b_n)_{n\in\N}$ close in a sense to being monotone
one may infer regularity properties of $b$ from the sequence $(b_n)_{n\in\N}$; see for instance
\cite[Ch.~7]{Edw79book}, \cite[Ch.~5]{Zyg02book}, \cite{DyaMuk19} and references therein.

Note finally that
any polynomial rate of decay
$t^{-\alpha}$ with $\ga\in(0,1)$
can be achieved by choosing the damping function $b\in\Lp[2](0,1)$ such that $b_n=n^{-1/(2\alpha)}$ for $n\in\N$.
Moreover, by~\cite{Naz98} the same scale of polynomial rates can be realised by means of continuous damping functions.
It would be interesting to consider similar statements about other scales of decay rates, for instance of regularly varying functions, but we do not pursue this here.
\end{remark}

\subsection{A damped fractional Klein--Gordon equation}

In this example we consider a ``fractional Klein--Gordon equation'' with viscous damping studied in~\cite{MalSta20}; see also~\cite{Gre20}.
For a fixed $\ga\in(0,1]$ this system has the form
\eq{
&w_{tt}(\xi,t)+(-\partial_{\xi\xi})^\ga w(\xi,t)+mw(\xi,t) + b(\xi)^2w_t(\xi,t) =0, \qquad \xi\in \R,\ t>0\\
&\qquad\quad w(\cdot,0)= w_0(\cdot)\in H^{2\ga}(\R), \qquad w_t(\cdot,0)=w_1(\cdot)\in H^\ga(\R),
}
where $m>0$ and $b\in \Lp[\infty](\R)$ is the non-negative damping coefficient.
We assume that  $\essinf_{\xi\in\omega} b(\xi)>0$ for some non-empty open set $\omega\subseteq\R$ which is invariant under translation by  $2\pi$.

Polynomial stability of this equation was studied e.g.\ in~\cite{MalSta20}.
In the following proposition we use the wavepacket condition~\eqref{eq:WE_WPform} to derive the same resolvent estimate under the above assumptions on $b$ (strictly weaker conditions on the damping were also considered recently in~\cite{Gre20}).
    The fractional Klein--Gordon equation is again of the form studied in Section~\textup{\ref{second_order_setup}},
 now with $H =U= \Lp[2](\R)$, $L=(-\partial_{\xi\xi})^\ga+m>0$ with domain $H_1= H^{2\ga}(\R)$ and $H_{1/2}=H^{\ga}(\R)$. The damping operator $D\in \Lin(\Lp[2](\R))$ is the multiplication operator defined by $D u = bu$ for all $u\in\Lp[2](\R)$. 
 
\begin{proposition}
Let $0<\ga<1$. There exists $C>0$ such that
for every $w_0\in H^{2\ga}(\R)$ and $w_1\in H^\ga(\R)$ the solution $w$ 
 of the fractional Klein--Gordon equation
 satisfies
\eq{
\norm{(w(\cdot,t),w_t(\cdot,t))}_{H^\ga\times \Lp[2]}\leq  \frac{C}{t^{\ga/(2-2\ga)}}\norm{(w_0,w_1)}_{H^{2\ga}\times H^\ga}, \qquad t>0.
}
\end{proposition}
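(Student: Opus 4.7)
The plan is to verify the wavepacket condition of Theorem~\ref{thm:WPtoRG_WE} for the pair $(B_0^\ast,\Azhalf)$ with appropriate bounded functions $\gamma_0,\delta_0$, and then to invoke Theorem~\ref{thm:sg_decay}. Conditions (1)--(4) of Assumption~\ref{ass:ABBass} are automatic by Example~\ref{ex:AbstractWE} since $B_0\in \Lin(\Lp[2](\R))$ is bounded. Via the Fourier transform, $\Azhalf$ acts as multiplication by $\sigma(k) := (|k|^{2\ga}+m)^{1/2}$, so for $s>\sqrt{m}$ the spectral interval $(s-\delta_0(s),s+\delta_0(s))$ corresponds in $k$-space to two symmetric intervals centred at $\pm k_0(s)=\pm(s^2-m)^{1/(2\ga)}$, each of Lebesgue measure of order $\delta_0(s)\,s^{1/\ga-1}/\ga$. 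Setting $\delta_0(s) := \min\{\delta_\ast,\,c_1 s^{1-1/\ga}\}$ for suitably small constants $\delta_\ast,c_1>0$ therefore produces a bounded function on $\R_+$ such that the Fourier support of every $(s,\delta_0(s))$-wavepacket $w$ of $\Azhalf$ is contained in a union of (at most) two intervals of total Lebesgue measure at most some fixed $\rho_0>0$, independently of $s$.

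The heart of the argument is a uniform observability estimate for bandlimited functions. Because $\omega$ is open, invariant under translation by $2\pi$, and $\essinf_{\xi\in\omega}b(\xi)>0$, the set $\omega$ is thick in the sense of Kovrijkine: by $2\pi$-periodicity, $|\omega\cap[x,x+2\pi]|$ is a strictly positive constant in $x\in\R$. Kovrijkine's sharp form of the Logvinenko--Sereda theorem then yields a constant $C_K=C_K(\omega,\rho_0)>0$ such that every $f\in \Lp[2](\R)$ whose Fourier transform is supported in a union of at most two intervals of total length at most $\rho_0$ satisfies $\|f\|_{\Lp[2](\omega)}\geq C_K\|f\|_{\Lp[2](\R)}$, with $C_K$ independent of where those intervals are located on the real line. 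Applied to every wavepacket $w$ above and combined with the pointwise lower bound $b(\xi)^2\geq c:=(\essinf_{\xi\in\omega}b(\xi))^2>0$ on $\omega$, this yields
\[
  \|B_0^\ast w\|_{\Lp[2]}^2 = \int_\R b(\xi)^2|w(\xi)|^2\,d\xi \geq c\int_\omega |w|^2 \geq c\,C_K^2\,\|w\|_{\Lp[2]}^2 .
\]
Hence the wavepacket condition holds with the \emph{constant} $\gamma_0\equiv \sqrt{c}\,C_K$ and the bounded function $\delta_0(\cdot)$ above; for $s\in[0,\sqrt m)$ there are no nontrivial wavepackets of $\Azhalf$ and the condition is vacuous there.

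Theorem~\ref{thm:WPtoRG_WE} now gives $i\R\subset\rho(A-BB^\ast)$ together with $\|(is-A+BB^\ast)^{-1}\|\lesssim \gamma_0^{-2}\delta_0(|s|)^{-2}\lesssim 1+|s|^{(2-2\ga)/\ga}$ for every $s\in\R$. The function $M(s):=1+s^{(2-2\ga)/\ga}$ is continuous, non-decreasing and, since $\ga\in(0,1)$, has positive increase, with right-inverse satisfying $M\inv(t)\asymp t^{\ga/(2-2\ga)}$ as $t\to\infty$. Theorem~\ref{thm:sg_decay} therefore yields $\|T_B(t)(A-BB^\ast)^{-1}\|=O(t^{-\ga/(2-2\ga)})$ as $t\to\infty$. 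Since $B_0$ is bounded we have $\Dom(A-BB^\ast)=\Dom(A)=H^{2\ga}(\R)\times H^\ga(\R)$ and $\|(A-BB^\ast)(w_0,w_1)\|_X \lesssim \|w_0\|_{H^{2\ga}}+\|w_1\|_{H^\ga}$, which combined with the contractivity of $T_B$ on bounded time intervals yields the claimed bound on $\|(w(\cdot,t),w_t(\cdot,t))\|_{H^\ga\times\Lp[2]}$ for all $t>0$. The main technical difficulty is precisely the uniform Kovrijkine-type estimate: the Fourier support of a wavepacket has diameter of order $s$ but only measure of order one, so it is crucial that the bandlimited observability constant depends only on the total measure and number of intervals and not on their location.
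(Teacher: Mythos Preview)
Your argument is correct and reaches the same resolvent bound, but it differs substantively from the paper's route. The paper does not appeal to a Logvinenko--Sereda/Kovrijkine estimate; instead it first treats the classical case $\alpha=1$ as a black box: by \cite{BurJol16} the damped Klein--Gordon equation with a smooth $2\pi$-periodic minorant $b_1\le b$ is exponentially stable, hence $((0,B_1^\ast),A_1)$ is exactly observable, and by \cite{Mil12} this yields the wavepacket condition for $A_1^{1/2}$ with \emph{constant} parameters $\delta_1,\gamma_1$. The passage to $\alpha<1$ is then purely spectral: via the functional calculus, an $(s,\delta_0(s))$-wavepacket of $A_\alpha^{1/2}$ is an $(s',\delta_1)$-wavepacket of $A_1^{1/2}$ provided $\delta_0(s)\asymp s^{1-1/\alpha}$, which is the same scaling you obtain.

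What each approach buys: the paper's reduction to $\alpha=1$ stays entirely within the observability framework of Sections~\ref{sec:assumptions}--\ref{sec:freqdomresults} and leverages an existing PDE stability result, but that result is itself nontrivial. Your argument is more direct and arguably more transparent about the harmonic-analytic mechanism: the only input is that bandlimited functions with Fourier support of uniformly bounded measure (in at most two intervals, regardless of their position) are observable from a thick set, which is exactly Kovrijkine's theorem. This also shows that the $2\pi$-periodicity assumption on $\omega$ can be relaxed to thickness. One minor imprecision: for $s$ slightly below $\sqrt m$ there \emph{can} be nontrivial wavepackets (when $s+\delta_0(s)>\sqrt m$), but their Fourier support is then a single interval about the origin of small measure, so your Kovrijkine bound still applies and the conclusion is unaffected.
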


\begin{proof}
Let us begin by showing that the classical Klein--Gordon equation corresponding to $\ga=1$ is exponentially stable.
Due to the properties of the damping coefficients we may choose a smooth and $2\pi$-periodic function $b_1$ such that $0\leq b_1\leq b$ on $\R$ and $\inf_{\xi\in\gw_1}b_1(\xi)>0$ for a non-empty open set $\gw_1\subseteq \gw$.
By~\citel{BurJol16}{Thm.~1.2} the Klein--Gordon equation with damping coefficient $b_1$ is exponentially stable. If we define $D_1\in \Lin(\Lp[2](\R))$ so that $D_1u = b_1u$ for all $u\in\Lp[2](\R)$,
and define
$B_1=\pmatsmall{0\\ D_1}$,
 then
$(B_1^\ast,A)$ is exactly observable, and
by~\citel{Mil12}{Cor.~2.17} the pair $(\mathcal  B_1^\ast, A)$ satisfies the wavepacket condition~\eqref{eq:WPcond} for constant functions $\gd(s)\equiv \gd>0$ and $\gg(s)\equiv \gg>0$.
However, since $b(\xi)\geq b_1(\xi)$ for all $\xi\in\R$ we see that also $(B^\ast,A)$ satisfies the wavepacket condition for the same functions $\gd$ and $\gg$.

Let us temporarily write $L_\alpha$ for the operator $(-\partial_{\xi\xi})^\ga+m$, $0<\alpha\le1$, accepting that this entails a minor abuse of notation. Since $\sigma(L_\alpha)\subseteq[m,\infty)$ for $0<\alpha\le 1$, we obtain from Lemma~\ref{lem:WE_WPs} that
\eqn{
\label{eq:KGEqnWPcond}
\norm{D^\ast w}_U\geq \gg_1\norm{w}_H
}
for all $(s,\gd_1)$-wavepackets $w$ of $L_{\alpha}^{1/2}$, where  $\gd_1,\gg_1>0$ are suitable constants.

For $0<\alpha\le1$ and any bounded function $\delta_0:\R_+\to(0,\infty)$ the $(s,\delta_0(s))$-wavepackets of $L_\alpha^{1/2}$ are precisely the elements of
 $\ran(\chi_{I_{s,\delta_0(s)}}(L_\alpha^{1/2}))$, 
where $I_{s,\delta_0(s)}=(s-\gd_0(s),s+\gd_0(s))$.
Using the spectral theorem we see  that if  $I\subseteq[\sqrt{m},\infty)$ is a bounded interval then 
$\ran(\chi_I(L_\alpha^{1/2}))=\ran(\chi_{J_\alpha}(L_1^{1/2}))$, where $J_\alpha=((I^2-m)^{1/\alpha}+m)^{1/2}$. 
Now fix $\alpha\in(0,1)$ and let $\delta_0(s)=c(1+s^{\alpha\inv-1}),\ s \ge 0$, where $c>0$ is a constant. Straightforward estimates show that the images of the intervals $I_{s,\delta_0}\cap[\sqrt{m},\infty)$ under the map $I\mapsto J_\alpha$ have length bounded by some constant multiple of $c$. It follows that~\eqref{eq:KGEqnWPcond} holds also for all $(s,\gd_0(s))$-wavepackets $w$ of $L_\alpha^{1/2}$ provided that $c$ is sufficiently small. (Here the form of the function $\delta_0$ can either be guessed or alternatively derived by considering the images of constant-width intervals  under the inverse of the map $I\mapsto J_\alpha$.)
Moreover, since $D\in \Lin(\Lp[2](\Omega))$ we have 
 $\abs{s}\,\norm{D^\ast ( (1+is)^2+L)\inv D}\lesssim 1$, $s\in\R$.
Thus we deduce from Theorem~\ref{thm:WPtoRG_WE} that $\|(is-A_B)^{-1}\|\lesssim 1+|s|^{2(\alpha^{-1}-1)^{-1}}$ for $s \in \R$. The claim now follows directly from Theorem~\ref{thm:sg_decay}.
\end{proof}

\subsection{A weakly damped beam equation}

In this section we consider the stability of the following Euler--Bernoulli beam equation with weak damping,
  \eq{
    &w_{tt}(\xi,t)+w_{\xi\xi\xi\xi}(\xi,t)+b(\xi)\int_0^1 b(r)w_t(r,t)dr =0, \qquad \xi\in (0,1), ~t>0,\\
    & w(0,t)= 0, \qquad w_{\xi\xi}(0,t)=0, \hspace{4.2cm} t>0, \\
    & w(1,t)= 0, \qquad w_{\xi\xi}(1,t)=0, \hspace{4.2cm} t>0,\\
    & w(\cdot,0)= w_0(\cdot)\in H^4(0,1)\cap H_0^1(0,1), \\
    & w_t(\cdot,0)=w_1(\cdot)\in H^2(0,1)\cap H_0^1(0,1),
  }
  where $b\in \Lp[2](0,1;\R)$ is the damping coefficient. The boundary conditions describe a situation in which the beam is \emph{simply supported}.

The beam equation fits into the framework of Section~\ref{second_order_setup} with the choices $H=\Lp[2](0,1)$ and
\eq{
L = \partial_{\xi\xi\xi\xi}, \quad H_1= \setm{x\in H^4(0,1)}{x(0)=x''(0)=x(1)=x''(1)=0}.
}
The operator $L$ is invertible and positive and its positive square root is given by $\Azhalf =-\partial_{\xi\xi}$ with domain $H_{1/2}=H^2(0,1)\cap H_0^1(0,1)$.  The eigenvalues and normalised eigenfunctions of $\Azhalf $ are given by $\gl_n=n^2\pi^2$ and $\phi_n(\cdot)=\sqrt{2}\sin(n\pi \cdot)$, respectively,  for $n\in\N$.  As in Section~\ref{subs_weak}, $U=\C$ and 
$D\in \Lin(\C,H)$ is the rank-one operator defined by $D u=bu $ for all $u\in \C$.

Our aim is to study the asymptotic behaviour of the solutions of the damped beam equation using the wavepacket condition in Theorem~\ref{thm:WPtoRG_WE}.
Since the eigenvalues $\gl_n=n^2\pi^2$, $n\in\N$, have a uniform gap, we may choose $\gd(s)\equiv \pi^2/4$. The non-trivial $(s,\gd(s))$-wavepackets of $\Azhalf $ are then multiples of the eigenfunctions $\phi_n$ for $n\in\N$ such that $ n^2\pi^2\in (s-\pi^2/4,s+\pi^2/4) $.
For any $n\in\N$ we have
\eq{
\abs{D^\ast \phi_n} = \sqrt{2}\Abs{\int_0^1 b(\xi)\sin(n\pi \xi)d\xi}.
}
These Fourier sine series coefficients are identical to the ones in
Section~\ref{sec:wave1Dweakdamping}.
However, the locations of the eigenvalues of $A$ now result in a slower rate of resolvent growth than in the case of the wave equation.
In order to have $i\R\subseteq \rho(A_{B})$ it is again necessary that $D^\ast \phi_n\neq 0$ for all $n\in\N$.
However, since the gaps between the eigenvalues $n^2\pi^2$ of $\Azhalf $ grow without bound as $n\to\infty$, the same damping has a greater relative effect for the beam equation than for the wave equation.

\begin{corollary}
\label{cor:1DbeamWeakDamping}
Assume that $\abs{D^\ast \phi_n}\gtrsim f(n^2\pi^2)$ for a continuous strictly decreasing function $f:\R_+\to(0,\infty)$ such that $f(\cdot)\inv$ has positive increase.
Then there exist $C,t_0>0$ such that
for every $w_0\in H_1$ and $w_1\in H_{1/2}$ the (classical) solution of
the weakly damped beam equation
satisfies
\eq{
\norm{(w(\cdot,t),w_t(\cdot,t))}_{H^2\times \Lp[2]}\leq  \frac{C}{N\inv(t)}\norm{(w_0,w_1)}_{H^4\times H^2}, \qquad t\geq t_0,
}
where $N\inv$ is the inverse function of $N(\cdot):=f(\cdot)^{-2}$.
Moreover, if there exists an increasing sequence
 $(n_k)_{k\in\N}\subseteq \N$
such that
$\abs{D^\ast\phi_{n_k}}\lesssim f(n_k\pi)$ for all $k\in\N$,
then the decay rate is optimal in the sense of Theorem~\textup{\ref{thm:OptimMain}}.
\end{corollary}

The coefficients $\abs{D^\ast \phi_n}$ for the functions $b$ defined by $b(\xi)=1-\xi$, $b(\xi)=\xi^2(1-\xi)$ and $b(\xi)=\chi_{(0,\xi_0)}(\xi)$ (with $\xi_0\in (0,1)$ an irrational number of constant type) are presented in~\eqref{eq:1DwaveWeakDampingCoeffs}, and for these functions Corollary~\ref{cor:1DbeamWeakDamping} implies the asymptotic rates
      $t^{-1}$, $t^{-1/3}$ and $t^{-1/3}$ as $t\to\infty$, respectively.
Note finally that Remark~\ref{rem:DampingClass} applies also in the setting of this section.

\end{document}